\numberwithin{equation}{section}
\DeclareMathOperator{\N}{\mathbb{N}} 
\DeclareMathOperator{\Z}{\mathbb{Z}} 
\newcommand{\p}{\mathbb{P}} 
\renewcommand{\P}{\mathbb{P}} 
\newcommand{\E}{\mathbb{E}} 
\newcommand{\e}{\varepsilon}
\newcommand{\sI}{\mathscr{I}}
\def\cA{{\mathcal A}}
\def\cC{{\mathcal C}}
\def\cD{{\mathcal D}}
\def\cS{{\mathcal S}}
\newtheorem{maintheorem}{Theorem}
\newtheorem{theorem}{Theorem}[section]
\newtheorem{lemma}[theorem]{Lemma}
\newtheorem{remark}[theorem]{Remark}
\newtheorem{proposition}[theorem]{Proposition}
\begin{document}

\title{Activated random walk on a Cycle}

\author{Riddhipratim Basu}
\address{Riddhipratim Basu, International Centre for Theoretical Sciences, Tata Institute of Fundamental Research, Bangalore, India}
\email{rbasu@icts.res.in}
\author{Shirshendu Ganguly}
\address{Shirshendu Ganguly, Departments of Statistics and  Mathematics, UC Berkeley, CA, USA}
\email{sganguly@berkeley.edu}
\author{Christopher Hoffman}
\address{Christopher Hoffman, Department of Mathematics, University of Washington,
Seattle, USA}
\email{hoffman@math.washington.edu}
\author{Jacob Richey}
\address{Jacob Richey, Department of Mathematics, University of Washington,
Seattle, USA}
\email{jfrichey@math.washington.edu }
\date{\today}

\maketitle


 \begin{abstract}
We consider Activated Random Walk (ARW), a particle system with mass conservation, on the cycle $\Z/n\Z$. One starts with a mass density $\mu>0$ of initially active particles, each of which performs a simple symmetric random walk at rate one and falls asleep at rate $\lambda>0.$ Sleepy particles become active on coming in contact with other active particles. There have been several recent results concerning fixation/non-fixation of the ARW dynamics on infinite systems depending on the parameters $\mu$ and $\lambda$. On the finite graph $\Z/n\Z$, unless there are more than $n$ particles, the process fixates (reaches an absorbing state) almost surely in finite time. In a first rigorous result for a finite system, establishing well known beliefs in the statistical physics literature, we show that the number of steps the process takes to fixate is linear in $n$ (up to poly-logarithmic terms), when the density is sufficiently low compared to the sleep rate, and exponential in $n$ when the sleep rate is sufficiently small compared to the density, reflecting the fixation/non-fixation phase transition in the corresponding infinite system as established in \cite{RS12}. 
 \end{abstract}

\section{Introduction} \label{sec:intro}

Consider the following interacting particle system on a one dimensional lattice. Given a configuration of particles, initially all active, the dynamics, which conserves the particles, proceeds as follows. Each active particle independently does a simple symmetric random walk at rate one in continuous time and falls asleep at rate $\lambda>0$. Each sleepy particle is awakened when an active particle occupies the same site. This model, known as Activated Random Walk (ARW), has attracted interest in non-equilibrium statistical mechanics as well as probability literature in recent years in connection with studying fixed energy sandpile models \cite{DVZ98, VDMZ98, VDMZ00, Dickman2001, VD05, DRS10, RS12}. The motivation of studying this model is two-fold. ARW can be regarded as a special case of driven diffusive epidemic process introduced by Spitzer in 1970s, and studied later in \cite{KS1,KS2,KS3,KS4}. ARW was also introduced in the physics literature as a more mathematically tractable approximation of the Stochastic Sandpile Model (SSM), and is one of the paradigm examples of the widely studied phenomenon of self-organized ciriticality (SOC) \cite{Manna90,Manna91, SOC2,SOC1}.

ARW is believed to manifest self organized criticality when run in a finite volume with carefully controlled driven diffusive dynamics. However, the rigorous study of ARW has so far been mostly restricted to the case of infinite volume limit where the counterpart of SOC is Absorbing state Phase Transition (APT) (although some recent results have called into question the exact relationship between these two notions \cite{fey, levinethreshold,houghsandpile}). Absorbing state Phase Transition was rigorously established for ARW on $\Z$ a few years ago in the fundamental work of Rolla and Sidoravicius \cite{RS12}. Let us briefly explain their result. Consider ARW started with initial configuration of particles coming from a product measure with density $\mu$; denote this process by ${\rm ARW}(\mu,\lambda)$. One would expect that for a fixed $\lambda$, if $\mu$ is very small, then all the particles will eventually fall asleep, whereas for large $\mu$ the activity would go on forever. Indeed, in \cite{RS12} it was shown that for each $\lambda>0$, there exists $\mu_c(\lambda)\in [\frac{\lambda}{\lambda+1},1]$ such that for $\mu<\mu_c$ the process ${\rm ARW}(\mu,\lambda)$ on $\Z$ fixates (i.e., the total number of jumps at origin is finite) almost surely, and for $\mu>\mu_c$ the process remains almost surely active forever. Observe that it is easy to understand heuristically why $\mu_c\leq 1$. If $\mu>1$, there are ``more particles than sites" and hence not all particles can eventually fall asleep \cite{Shellef10,GA10,RS12}. Complementing the results of \cite{RS12}, the first three authors showed in \cite{BGH15} that for any fixed $\mu>0$, the process almost surely does not fixate if $\lambda$ is sufficiently small; thus showing $\mu_c\to 0$ as $\lambda\to 0$, and answering a question from \cite{DRS10,RS12}. Subsequently, a statement similar to the latter was proven for transient Euclidean lattices in \cite{stauffer}, which also analyzed ARW dynamics on transitive graphs where the random walk is ballistic. However, to rigorously establish the critical or near-critical behaviour in these models seems far out of reach of the current mathematical techniques. 

The results in \cite{RS12,BGH15} are in the setting of ARW on the inifinite lattice $\Z$. Indeed, there has been a flurry of recent mathematically rigorous results on ARW following the breakthrough work \cite{RS12}, but most of them have been in the context of Euclidean lattices or other infinite graphs \cite{Shellef10,GA10, ST14, RT15,T14,stauffer}. From the point of view of understanding self-organized criticality, it is interesting to study this model on a finite lattice, with say a periodic boundary condition. On a finite graph, if the total number of particles is more than the number of vertices, then the process will continue for ever. If the total number of particles is at most the number of vertices, this is an absorbing Markov chain, so all the particles will almost surely fall asleep after a finite time. One would expect the absorbing state phase transition to be manifested in the finite process as a phase transition for the absorption time. For many finite systems of these type, it is generally believed that absorption time has three different scalings with the system size, polynomial (with different exponents) for the sub-critical and critical systems, and exponential for the super-critical system. Indeed a version of the above statement in the set up of \cite{RS12} and \cite{BGH15} respectively are the main results of this paper. In physics literature there are many non-rigorous and numerical results about the critical and near-critical scaling of this and related quantities for SSM and its many variants (see e.g.\ \cite{lubeck04} and references therein). However, as with the infinite system, rigorous analysis of the critical scaling behaviour remains a challenging problem.

\subsection{Main Results} \label{sec:results} 

Consider an $n$-cycle $\Z/n\Z$ with nearest neighbour edges. Fix $\lambda>0$ and $\mu\in (0,1)$. Consider the initial configuration with independent $\mbox{Ber}(\mu)$ many particles at each site. (We will denote the product Bernoulli measure by $\P^{\mu}$). Consider ARW started with this configuration with sleep rate $\lambda$; denote this process by ${\rm ARW}(\mu, \lambda)$. As mentioned before, this is an absorbing Markov chain and hence the process reaches the absorbing state of all sleepy particles (the set of all such configuration will be henceforth called the cemetery set  and written $\Delta$) after a finite time almost surely. Let $T_n(\mu, \lambda)$ denote the total number of attempts by any active particle to either jump or to try to sleep.  Note that the continuous time ARW dynamics can be coupled naturally to the following discrete time dynamics: at every positive integer time, pick an active particle uniformly at random. With probability $\frac{1}{2(1+\lambda)}$ each, the particle jumps to one of the neighbouring sites, and with probability $\frac{\lambda}{1+\lambda}$ it tries to fall asleep.  It is easy to see that under the natural coupling, $T_n(\mu, \lambda)$ is the absorption time of the latter dynamics.

Our first result shows that if the particle density $\mu$ is sufficiently small compared to $\lambda$ then $T_n(\mu,\lambda)$ is linear up to poly-logarithmic correction factors. 

\begin{maintheorem}
\label{sub}
Consider ${\rm ARW}(\mu,\lambda)$ on $\Z/n\Z$. For any $\lambda > 0$ and $\mu < \frac{\lambda}{1+\lambda}$, there exist positive constants $C_0,b$ depending on $\mu$ and  $\lambda$, such that for all large enough $n$,
\[
\p(T_n(\mu, \lambda) >  C_0 n\log^2n ) \le \frac{1}{n^b}.
\]
\end{maintheorem}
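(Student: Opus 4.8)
The plan is to exploit the abelian property of ARW. In the Diaconis--Fulton ``instruction stack'' representation, both the absorbed configuration and the odometer $u\colon\Z/n\Z\to\Z_{\ge 0}$ (where $u(x)$ is the number of topplings at $x$) are independent of the order in which legal topplings are carried out, and $T_n(\mu,\lambda)=\sum_x u(x)$; so it suffices to run the stabilization in a convenient order and bound $\sum_x u(x)$. Since $\mu<\tfrac{\lambda}{1+\lambda}<1$, a Chernoff bound gives that the number of particles $N=\sum_x\eta(x)$ is at most $(1-\delta)n$ except with probability $e^{-cn}$, negligible against $n^{-b}$; we work on this event, so absorption occurs. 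I would then release the particles one at a time: enumerate them $1,\dots,N$, and for $j=1,\dots,N$ place particle $j$ (active) at its site in the configuration $\bar\sigma_{j-1}$ obtained after stabilizing particles $1,\dots,j-1$ and perform legal topplings until particles $1,\dots,j$ are all asleep again; call this the $j$-th \emph{avalanche}, let $S_j\subset\Z/n\Z$ be the (arc of) sites it topples, and let $u^{(j)}\ge0$ be its contribution to the odometer. By the abelian property the odometers add, $u=\sum_{j=1}^N u^{(j)}$, and $S_j$ is an arc containing the starting site of particle $j$, so $T_n=\sum_j\sum_x u^{(j)}(x)$, which we control through $\mathrm{diam}(S_j)$.

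The crux, and the step I expect to be the main obstacle, is the uniform exponential tail
\[
\P\!\left(\mathrm{diam}(S_j)>t \,\middle|\, \cF_{j-1}\right)\le c_1 e^{-c_2 t},\qquad t\ge0,
\]
with $c_1,c_2$ depending only on $\mu,\lambda$, valid on the event that $\bar\sigma_{j-1}$ has at most one particle per site and at most $(1+\e)\mu\ell$ particles in every arc of length $\ell\ge K\log n$; here $\cF_{j-1}$ is generated by the first $j-1$ avalanches. This is the finite-volume, arbitrary-background version of subcritical fixation on $\Z$. The natural route is to adapt the site-wise ``weak stabilization'' argument of \cite{RS12} behind $\mu_c\ge\tfrac{\lambda}{1+\lambda}$: realise the avalanche as an explored trajectory that performs a simple symmetric random walk killed with probability $\tfrac{\lambda}{1+\lambda}$ at each step and, whenever it reaches a site carrying a sleeping particle, spawns another such trajectory; when $\mu<\tfrac{\lambda}{1+\lambda}$ and the local density is controlled, this exploration is dominated by a subcritical process, so its total range has exponential tails uniformly in the background. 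The delicate bookkeeping is that after the first avalanche the background is no longer a product measure, so the tail bound genuinely must be uniform over admissible backgrounds, and the local-density hypothesis must be propagated: avalanche $j$ relocates at most $\mathrm{diam}(S_j)$ particles and only within $S_j$, so if $\mathrm{diam}(S_j)\le K\log n$ a configuration meeting the density bound still does, with a slightly larger constant, afterwards. Thus avalanche-size control and local-density control are carried together by induction on $j$, all the conditionally exponentially small failure probabilities being absorbed into a single union bound.

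That union bound over the at most $n$ avalanches shows that, for $K=K(\mu,\lambda,b)$ large enough, the event $\cG:=\{\max_{j\le N}\mathrm{diam}(S_j)\le K\log n\}$ fails with probability at most $n\cdot c_1 e^{-c_2K\log n}\le n^{-b}$; in particular, no avalanche wraps around the cycle. On $\cG$ each avalanche is confined to an arc of length $\le K\log n$, and a deterministic estimate for ARW on such an arc (using that in the subcritical regime the avalanche involves $O(\log n)$ particles that are, in total, toppled $O(\log^2 n)$ times — improving on the bare abelian-sandpile comparison) gives $T_n=\sum_j\sum_x u^{(j)}(x)\le N\cdot O(\log^2 n)\le C_0 n\log^2n$, as claimed. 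The two logarithmic factors arise respectively from forcing every one of the $\Theta(n)$ avalanches to have diameter $O(\log n)$ and from the work-versus-diameter bound inside a single avalanche; the power of the logarithm is not optimized, the expected truth being that $T_n$ is linear in $n$.
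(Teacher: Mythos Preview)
Your avalanche-by-avalanche scheme has a genuine gap at the inductive propagation of the local-density constraint. You assert that one avalanche of diameter $\le K\log n$ only changes the density constant ``slightly'', but this degradation accumulates over the $N\approx\mu n$ avalanches in a way your argument does not control. Concretely: a particle deposited in avalanche $j$ can be displaced again in every subsequent avalanche that reaches it, so there is no a priori bound on how far a particle ends up from its initial site, and hence no reason why an arc of length $K\log n$ in $\bar\sigma_j$ cannot eventually carry close to $K\log n$ particles. Even granting that each avalanche stays within $K\log n$ of its seed, a fixed boundary point $p$ is straddled by order $\mu K\log n$ different avalanches (those seeded within $K\log n$ of $p$), each able to push order $K\log n$ particles across $p$, so the particle count in an arc of length $K\log n$ can shift by order $(K\log n)^2$ over the course of the process --- destroying the constraint at exactly the scale where you need it to invoke the exponential tail. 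The ``induction on $j$'' is therefore circular unless you supply an independent argument that the intermediate stable configurations $\bar\sigma_j$ retain density below $\tfrac{\lambda}{1+\lambda}$ on scale $K\log n$; this is a nontrivial uniformity statement about stabilized ARW and is not addressed. Separately, the uniform-over-backgrounds tail bound you flag as the main obstacle is genuinely not in \cite{RS12}, whose trap argument is for an i.i.d.\ background, and your branching-random-walk caricature does not model ARW correctly (particles cannot sleep while sharing a site, and wakened particles must themselves be re-settled); and the claimed $O(\log^2 n)$ work per avalanche is unjustified --- the natural count gives $O(\log^3 n)$.

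The paper sidesteps all of this with a different decomposition. It first exploits monotonicity (Lemma~\ref{l:sleepmon}) to \emph{ignore all sleep instructions} and herd every particle by simple random walk to the nearest of $n/(c_0\log n)$ equally spaced source vertices; after this phase each source carries about $\mu c_0\log n$ particles and the rest of the cycle is empty. In a second phase it stabilizes each source independently inside its own interval of length $c_0\log n$ via a two-sided trap-setting procedure (Proposition~\ref{p:genericfix} and Lemmas~\ref{l:gap}--\ref{l:erosion}). Because the background in the trap phase is trivial --- all mass at a single site, nothing around it --- no uniform-over-backgrounds tail bound is needed, and there is no density to propagate between steps. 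Both phases cost $O(n\log^2 n)$ with polynomially small failure probability.
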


Observe that $\mu< \frac{\lambda}{\lambda+1}$ is precisely the regime in which \cite{RS12} showed fixation on $\Z$. It is also easy to observe that the bound is tight up to the polylogarithmic factor. To see this, observe that the expected total number of jumps a particle takes before trying to fall asleep is $\frac{1+\lambda}{\lambda}.$ Thus, if the number of particles is linear in $n$, the total number of jumps is also at least linear in $n$. 

In the heavily super-critical regime, i.e., when $\lambda$ is sufficiently small compared to $\mu$, we have the following complementary result (This result should be compared to \cite[Theorem 1]{BGH15}. ).

\begin{maintheorem}
\label{sup}
For any $0<\mu < 1$, there exists $\lambda_0 > 0$ and $c > 0$ such that for any $\lambda < \lambda_0$,  and all large enough $n,$
\[
\p(T_n(\mu, \lambda) <  e^{c n}) < e^{-cn}.
\]
\end{maintheorem}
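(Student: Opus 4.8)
The exponential lower bound should come from a \emph{metastable active phase}: when $\lambda$ is small, an active particle performs on the order of $1/\lambda$ simple-random-walk steps before it attempts to sleep, and along the way it wakes a macroscopic number of dormant particles, so activity is self-sustaining. This is exactly the mechanism behind non-fixation on $\Z$ in \cite{BGH15}, and on the finite cycle it should force the dynamics to remain active for a time exponential in $n$. Since every step of the discrete-time dynamics of the introduction is a toppling, it is enough to prove that the number $A_t$ of active particles after $t$ steps stays of order $n$ for at least $e^{cn}$ steps with probability $1-e^{-cn}$; on that event $T_n(\mu,\lambda)\ge e^{cn}$.

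The plan is to control $A_t$ through its increments. A step lowers $A_t$ by one only when the chosen particle attempts to sleep at a singly occupied site, hence with conditional probability at most $\tfrac{\lambda}{1+\lambda}$; it raises $A_t$ by one whenever the chosen particle jumps onto a dormant particle. The key point is that, \emph{provided the configuration is ``well mixed''} — say a linear number of the occupied active sites have a dormant particle on a neighbouring site — the latter event has conditional probability bounded below by an absolute constant, which for $\lambda$ small exceeds $\tfrac{\lambda}{1+\lambda}$. Thus $A_t$ has a strictly positive drift toward a level of order $n$ whenever it is below that level and the configuration is well mixed. I would then (i) run a burn-in of $\poly(n)$ steps from the all-active start to reach a well-mixed configuration with $A_t$ still of order $n$ and no fixation, with probability $1-e^{-cn}$ (during burn-in a linear number of particles fall asleep, but by the same increment estimates $A_t$ cannot drop to a small constant fraction of $n$); (ii) show that from any well-mixed configuration with $A_t=\Theta(n)$, after $\poly(n)$ further steps the configuration is again well mixed, $A_t=\Theta(n)$, and there is no fixation, with probability $1-e^{-cn}$, the lower bound on $A_t$ being an Azuma-type estimate exploiting the positive drift; and (iii) chain $e^{c'n}$ windows of length $\poly(n)$ by a union bound to get $\p(T_n(\mu,\lambda)<e^{c'n})\le e^{-cn}$, after relabelling constants.

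The heart of the matter, and the main obstacle, is step (ii): propagating the well-mixedness property uniformly over all configurations the process could be in. Because ARW is not monotone, one cannot exclude a priori that the dynamics segregates the dormant particles far from every active particle — which would destroy the positive drift of $A_t$ — before the regularity re-forms. The natural tool is a coupling, valid over $\poly(n)$-step windows because at small $\lambda$ almost no particle in a fixed bounded arc sleeps over such a window, with a system of independent simple random walks, together with the $O(n^2)$ mixing of simple random walk on $\Z/n\Z$, which re-randomises the local environment of a typical active particle on a polynomial time scale. Quantifying this coupling and making it uniform over starting configurations is where essentially all of the work lies; it is the cycle counterpart of the activity-spreading estimates of \cite{BGH15}.
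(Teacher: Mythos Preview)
Your proposal has a genuine gap, and you essentially admit it yourself: the entire argument hinges on maintaining the ``well-mixedness'' of the configuration, and you do not prove this. The step-by-step drift analysis of $A_t$ requires that, at each step, an active particle has a constant probability of jumping onto a dormant neighbour. But that conditional probability depends on the full spatial configuration, not on $A_t$ alone, and nothing prevents long stretches where no active particle is adjacent to any dormant one (in which case $A_t$ has strictly negative drift). The coupling with independent simple random walks you sketch is too vague to fix this: the coupling breaks precisely at wake-up events, which are the very quantity you need to lower-bound, and ``uniformly over all configurations the process could be in'' is an enormous class for which you give no argument. As written, step~(ii) is a restatement of the problem rather than a solution.

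The paper's approach avoids this difficulty entirely by exploiting the Abelian property in a way you do not. Instead of tracking the canonical dynamics step by step, it designs a specific legal toppling scheme (the \textbf{Stabilization Loop}) and lower-bounds its length. In each loop, all active particles are first driven to the two ``poles'' $\mathbf{0}$ and $\mathbf{r}$ of the cycle; then the particles at $\mathbf{0}$ are released and toppled until they hit $\mathbf{r}$ (and symmetrically). The crucial input is the interval estimate from \cite{BGH15} (Lemma~\ref{expmoment} here): when $\Theta(n)$ particles are stabilized inside $[-r,r]$ with absorbing boundary, for small $\lambda$ all but $o(n)$ of them reach the boundary, with exponentially small failure probability. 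Because the released particles start at the \emph{centre} and must traverse the whole interval, they necessarily sweep through every site and wake every dormant particle; no ``well-mixedness'' hypothesis is needed. One then shows each loop preserves at least $\mu n/2$ active particles except with probability $e^{-cn}$, and a union bound over $e^{cn/2}$ loops finishes. The Abelian property guarantees the length of this particular toppling scheme is a lower bound for $T_n(\mu,\lambda)$.
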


Theorem \ref{sub} relies heavily on the uniformity of the locations of the particles in the initial configuration (note however, that the arguments in this paper do not depend on the specific nature of the Bernoulli distribution of the initial configuration). In fact, it can be shown that $T_n(\mu,\lambda)$ is at least of order $n^3$ when all the particles start at the origin.  
An exponential upper bound for $T_n(\mu,\lambda)$  is also relatively easy to establish.  See Remarks \ref{sub12} and  \ref{sup12} for further elaboration. 

 We list below the new contributions in this paper and relations to existing results: Although the linear to exponential phase transition for absorption time is widely expected in the statistical physics literature, to the best of our knowledge this is the first rigorous result establishing such a transition for some variant of fixed energy sandpile models.
We rely crucially on the recent progress \cite{RS12, BGH15} in understanding ARW on the infinite line.
However one needs certain new ideas to deal with the finite case. 
 Following the argument of \cite{RS12}, the main obstacle in showing fast fixation for low particle density is the wrapping around issue, that is to make sure the particles do not wrap around the cycle, and wake up already settled particles. We get around this by a block argument and a two-sided variant of the stabilizing algorithm in \cite{RS12}. 
 In the process of attaining  the quantitatively optimal result Theorem \ref{sub}, we encounter a particle system similar to internal erosion (see \cite{levinethreshold}). 
 
For the slow fixation part i.e., Theorem \ref{sup}, we first recall that the the argument from \cite{BGH15} essentially tells us that when we stabilize a certain density of particles on $\Z/n\Z$ until they hit $0$, for small enough sleep rate only a small fraction of the particles fall asleep. 
The key observation in this paper is  that the above step can be  
applied iteratively for exponentially many rounds. 
A naive application of the iteration scheme only allows the number of steps to be logarithmic in $n$ since a constant fraction of particles fall asleep in every round. However the finiteness of the environment allows us to recycle particles which fell asleep in earlier rounds once they get woken up in later rounds. 
To this end we strengthen the argument in \cite{BGH15} by showing that with exponentially small failure  probability all the particles that fall asleep get woken before too many particles are lost, thereby sustaining the process for exponentially many steps. 

In the next section we elaborate on the above ideas further.

\subsection{Sketch of the proofs}\label{sop}
A crucial property of many interacting particle systems  that serve as models of distributed networks is the Abelian Property. Informally it means that the final outcome of a certain probabilistic experiment does not depend on the order in which operations at different sites are performed. 

In the context of ARW, one exploits the Abelian Property via the Diaconis-Fulton representation, which is roughly the following: (see Section \ref{fvap}, for a more formal description). At every site in $\Z/n\Z$, we have a `stack of instructions' which is a sequence of  i.i.d. instructions to jump to one of the neighbours with probability $\frac{1}{2(1+\lambda)}$ each, or is a sleep instruction with probability $\frac{\lambda}{1+\lambda}.$
Given the above stacks, one way to run the process is: as long as there is some active particle, pick an arbitrary site $x\in \Z/n\Z$ with at least one active particle and use the first unused instruction from the stack to topple the site. That is, if the instruction was a jump instruction, then the particle jumps to a neighbouring site accordingly, and otherwise tries to fall asleep. 

The Abelian Property then states that the final configuration of particles after every particle has fallen asleep does not depend on the order in which the sites were toppled. Thus in this language $T_n(\mu,\lambda)$
is the total number of instructions across all the stacks used  until the end of the toppling process.

We also rely on the following heuristically plausible monotonicity property of the ARW dynamics: given a set of stack instructions, while toppling sites, if we ignore any sleep instruction, i.e., the configuration does not turn an active particle to a sleepy particle, even though the instruction is a sleep instruction, then the total number of topplings required to reach $\Delta$ can only increase, (see Lemma \ref{l:sleepmon} and the discussion preceding it, for formal definitions). 

\subsubsection{Sketch of the proof of Theorem \ref{sub}}
Given the above two properties, to prove Theorem \ref{sub} we will provide a toppling scheme which will end with a configuration in the cemetery set $\Delta.$ Our toppling procedure will ignore certain sleep instructions and hence by the monotonicity property, the total number of instructions used in the actual process in the Diaconis-Fulton representation is upper bounded by the number of instructions used in our scheme. 
The basic idea is to break the cycle $\Z/n\Z$ in to sub-intervals $I_1,I_2,\ldots$ of size $O(\log n)$. Our toppling scheme is then a combination of toppling schemes, one for each of the sub-intervals. The toppling scheme for $I_i$ is designed to stabilize particles inside $I_i$ for  $i=1,2,\ldots,\frac{n}{\log n}$. The toppling scheme in each of the intervals is a variant of the trap-setting procedure appearing in \cite{RS12}.  We prove that with very small failure probability (exponential in the size of the interval) the toppling procedure in the interval succeeds to stabilize everything. A union bound over all the intervals show that with high probability the procedure succeeds simultaneously for all the intervals, and hence stabilizes the system. Recall that the failure probability for each interval is exponential in the size of the interval which forces us to choose the  size of the intervals to be logarithmic (in $n$)  as otherwise the union bound over polynomially (in $n$) many such intervals will fail; this also explains the logarithmic correction term in the statement of Theorem \ref{sub}. 

We now briefly describe our toppling scheme. It consists of broadly two parts. 
\begin{enumerate}
\item Phase 1: Given the initial configuration, the first step is to gather particles which are initially located uniformly over $\Z/n\Z$, to a set of points we call $\mathbf{Sources}$. We will take this set to be $\{\frac{c_0}{2}\log n, \frac{3c_0}{2} \log n, \ldots\}$ for some carefully chosen value of $c_0,$ depending on the parameters $\mu$ and $\lambda$.
Thus we first ignore all the sleep instructions and allow the particles to do independent random walks till they hit an element of $\mathbf{Sources}.$
Large deviation estimates imply that with high probability the number of particles at each $\mathbf{Source}$ at the end of this process is roughly $c_0 \mu \log n$. Recall that we are using the monotonicity property mentioned above, and hence we can ignore certain sleep instructions.

\item Phase 2:  The intervals $I_1,I_2.\ldots$ inside which we run our toppling scheme are of size $c \log n,$ and centred at the sources. The proof proceeds by showing that there is a toppling procedure which carefully ignores certain sleep instructions allowing the particles to fall asleep only at certain well chosen `traps' which prevents interaction with other particles.  The remainder of the proof then shows that the above scheme succeeds with high probability.
\end{enumerate}
\begin{figure}[h] 
\centering
\begin{tabular}{cc}
\includegraphics[scale=.7]{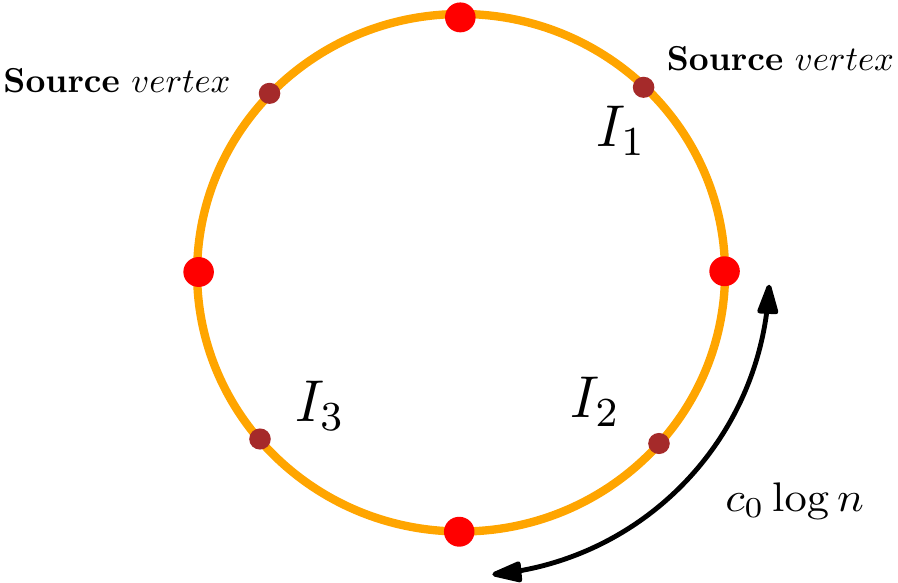} &\quad\quad\quad\quad\includegraphics[width=0.28\textwidth]{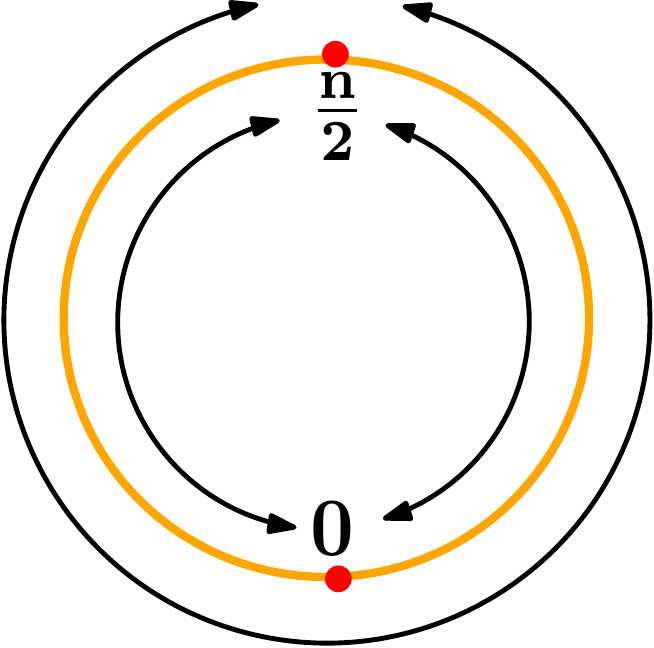} \\
(a) & \quad\quad\quad\quad(b)
\end{tabular}
\caption{ (a) The toppling scheme for fixation in the sub-critical regime: in the first step of the stabilization scheme, we  ignore sleep instructions to get every particle to a nearby source vertex. Particles at a particular source are then stabilized inside an interval of length $c_0\log n$ using a trap-setting procedure. (b) The toppling scheme for non-fixation in the supercritical regime: $0$ and $n/2$ are the north and south `poles' of the cycle. We run several rounds of the following stabilization loop where we first try to topple all particles located away from $0$ or $n/2$ until they fall asleep or hit $\{0, n/2\}$. Then we try to topple particles starting from $0$ until they hit $n/2$ or fall asleep, and afterwards do the same at $n/2$, namely topple all particles starting from $n/2$ until they hit $0$ or fall asleep. These three actions are repeated until all particles are asleep: our proof shows that this loop can be sustained for exponentially many steps (in $n$) with high probability. } 
\label{fig1}
\end{figure}

Even though the trap setting scheme is inspired from \cite{RS12}, the argument in the latter was particularly tailored to the ARW process on the infinite line and does not work on the cycle because of certain `wrapping around' issues. To circumvent this we introduce a  two sided version, which relying on estimates for random walk on the interval, can be shown to work in our setting.

\subsubsection{Sketch of the proof of Theorem \ref{sup}}
For the proof of Theorem \ref{sup} we rely on the non-fixation result from \cite{BGH15}. The technical core of that paper (see the proof of \cite[Lemma 18]{BGH15}) was to establish the following non-fixation phenomenon. Consider a sufficiently large interval $[0,r]$ with at least $\mu r $ many active particles, and stabilize the particle system inside the interior of the interval $[0,r]$, i.e., the particles are stopped upon hitting $\{0,r\}$. Then, given $\e>0,$ for any mass density $\mu$, and for all small enough $\lambda$, at the end of the stabilization procedure, the number of particles accumulating at $\{0,r\}$ is at least $(1-\e)$ fraction of the total number, with failure probability exponentially small in $r$. This was used in \cite{BGH15} to show infinite activity on the line, by considering a growing sequence of $r$ and then using the above statement to show that particles from arbitrary far away would hit the origin, thus implying  non-fixation. For a finite system, we cannot rely on an argument which uses particles arbitrarily far away. Instead we use the following `recycling particles' approach:
represent the cycle $\Z/n\Z$ as the interval $[-n/2, n/2]$ with endpoints identified. We run the following rounds of particle stabilization: 
\begin{enumerate}
\item We first treat the `poles' $0$ and $n/2$ as boundary points, and topple particles not at those sites until they fall asleep or land at one of the sites $0$ or $n/2$. By the arguments in \cite{BGH15}, a constant fraction of all particles will make it to either $0$ or $n/2$ with exponentially high probability.
\item We topple particles that ended up at $0$ in the previous stage until they fall asleep or hit $n/2$. The particles that did not start at $0$ do not move, but they can be woken up by other active particles during this stage. Using ideas from \cite[Lemma 18]{BGH15}, we show that most particles will make it to the boundary before falling asleep with exponentially high probability.
\item Just as the previous step, we only topple particles that started at $n/2$ at the end of the previous stage, running the dynamics until all such particles are asleep or at $0$. 
\end{enumerate}
Since all these steps keep most particles awake with exponentially high probability, we can run the loop exponentially many times. Moreover, since each loop takes at least one stack instruction to run, the Abelian Property implies that $T_n(\mu,\lambda)$ is at least exponential in $n$. 
\section*{Acknowledgements}
The authors thank Lionel Levine, Vladas Sidoravicius and Lorenzo Taggi for useful conversations, and Leonardo Rolla for comments on a previous version of the paper. Research of RB is partially supported by a Simons Junior Faculty Fellowship and a Ramanujan Fellowship from the Govt. of India. SG is supported by a Miller Research Fellowship.

\section{Abelian Property for Activated Random Walk}\label{fsetup}
For brevity, we will denote $\Z/n\Z$ by $\cC_n.$
We follow \cite{RS12} in formally describing the set up of ARW. To avoid unnecessary notational overhead we describe the bare minimum of the formalism necessary. We always work with ARW on $\cC_n$ for some fixed but large $n$, and the notation is adapted accordingly. In particular, for the remainder of this section, addition and subtraction will be considered modulo $n$ whenever appropriate and we shall not mention that explicitly every time.

For any time $t\ge 0$ and location $x \in \cC_n$, $\eta_t(x)$ denotes the state of the system at location $x$ at time $t$.  We write $\eta_t(x)= \rho$ if there is one sleepy particle 
at $x\in \cC_n$ at time $t$. If there is not a sleepy particle present we let $\eta_t(x) \in \N$
denote the number of particles at $x\in \cC_n$ at time $t$.
Then $\eta_t =\{\eta_t(x)\}_{x \in \cC_n}$ denotes the state of the system at time $t$.

We shall use two operations (called topplings) on the space of configurations. For $x\in\Z$ and $y=x\pm 1$, let $\tau_{x,y}(\eta)$ denote the configuration obtained by moving one of the active particles from $x$ to $y$. This operation will be called {\bf illegal} (for the configuration) if there are no active particles at $x$
and the system remains unchanged. 
Let $\tau_{x,\rho}(\eta)$ denote the configuration obtained from $\eta$ by making the solitary particle at $x$ fall asleep. Moreover if $x$ has more than one active particle, the sleep instruction has no effect, so $\tau_{x,\rho}(\eta)=\eta$. 
Again if there are no particles at $x$, this instruction is called illegal and the system is not changed.

Now we can formally define ARW as a finite state space continuous time Markov chain with transitions $\eta \rightarrow \tau_{x,y} \eta$ at rate $A(\eta_t (x))\frac{1}{2}\mathbf{1}_{y=x\pm 1},$ and $\eta\rightarrow \tau_{x,\rho}\eta$ at rate  $\lambda A(\eta_t (x))$
where $A(\eta(x))$ denote the number of active particles at site $x$ in configuration $\eta$.
Let $\P^{\nu}$ denote the law of the process started from an initial configuration distributed according to $\nu.$

\subsection{Diaconis-Fulton representation}\label{fvap}
We will now describe the Diaconis-Fulton representation of the ARW dynamics which will be convenient for our purposes. 
For an extensive discussion of the Diaconis-Fulton representation of ARW dynamics, the Abelian Property and its consequences, see \cite{RS12}. For completeness we recall the relevant results from \cite{RS12}, suitably adapted to the setting of a finite cycle. 

The Diaconis-Fulton representation \cite{DF91,Eriksson96} maps the ARW process to  sequence of instructions attached to the sites. The advantage of this representation is the  Abelian Property, which allows one to disregard the order in which different steps were performed in certain settings. We start by introducing a series of notations. Recall the operations $\tau_{x,y}$ and $\tau_{x,\rho} $ from above. Now consider the following array of random variables:
\begin{equation}\label{stack}
\mathscr{I}=
\begin{array}{ccccccc}
\ldots&\xi_{(-2,1)}&\xi_{(-1,1)}&\xi_{(0,1)}&\xi_{(1,1)}&\xi_{(2,1)}&\ldots\\
\ldots&\xi_{(-2,2)}&\xi_{(-1,2)}&\xi_{(0,2)}&\xi_{(1,2)}&\xi_{(2,2)}&\ldots\\
\vdots&\vdots&\vdots&\vdots&\vdots&\vdots&\vdots
\end{array}
\end{equation}
where $\xi_{(x,j)}$ are independent for any $x \in \cC_n$ and $j\in \N$ and moreover, 
\begin{equation}\label{law1}
\xi_{(x,j)}=\left\{ \begin{array}{cc}
\tau_{x,x-1} & \text{with probability~} \frac{1}{2(\lambda+1)}\\
\tau_{x,x+1} & \text{with probability~} \frac{1}{2(\lambda+1)}\\
\tau_{x,\rho} & \text{with probability~} \frac{\lambda}{\lambda+1}.
\end{array}
\right.
\end{equation} 
We will now show that using these instructions one can define a discrete time version of the ARW process.
In fact we can define many such versions.
But they will all have the same configuration when it is finally stabilized and the same set of instructions that have been implemented.

We call the $\xi_{(x,j)}$'s instructions at the site $x$ and the underlying product measure $\mathscr{P}$. 

Given a configuration $\eta$ at each discrete time step $t$, one can choose (arbitrarily) an unstable site $x$ and use the first unused element from the stack $\xi_{(x,\cdot)}$ and use it to perform the transition to a configuration $\eta'$ at time step $(t+1)$. As mentioned in Section \ref{sop}, we call such an operation ``toppling" at site $x$. 
We keep track of the number of topplings at every site. Let $\eta$ be the configuration after applying $h(x)$ many topplings at each site $x\in \cC_n$. Let us denote
\begin{equation}\label{odo12}
h:=(h(x): x \in \mathbb{Z}/n\Z)
\end{equation}
which we will call the odometer function. Let  $\Phi_x(\eta)$ denote the configuration obtained by toppling the site $x$ next, i.e., we apply the instruction $\xi_{x,h(x)+1}$, and also increase $h$ at $x$ by one ($h$ at other sites does not change). We say $\Phi_x$ is legal for $\eta$ if $x$ is unstable in $\eta$. For any sequence ${\bf{\alpha}}=(x_1,x_2,\ldots, x_k)$ we define the sequence of topplings at $x_1,$ followed by $x_2$ and so on through until $x_k$ by $\Phi_{\bf{\alpha}},$ i.e.\ $\Phi_{\bf{\alpha}}=\Phi_{x_k}\ldots \Phi_{x_1}$. We now say that ${\bf{\alpha}}$ is a {\bf legal sequence} for initial configuration $\eta$ if $\Phi_{x_i}$ is legal for $\Phi_{x_{i-1}}\ldots \Phi_{x_1}(\eta)$ for all $i=1,\ldots k.$
We abuse notation a little to denote by $h_{\alpha}$ the odometer function after performing the sequence of toppling given by $\alpha,$ i.e.\ for any $x \in \cC_n$, 
\begin{equation}\label{odo2}
h_{\alpha}(x)=\sum_{i=1}^{k}\mathbf{1}(\alpha_i=x).
\end{equation} 

Given the above preparation we can now formally state the Abelian Property, which says that given two sequence of legal topplings that result in the same odometer function (see \eqref{odo2}), the final configuration is the same in both the cases, i.e.\ the order in which topplings are performed does not matter. 
\begin{lemma}(Abelian Property, \cite[Lemma 2]{RS12})
\label{abp} 
Given any two legal sequence of topplings $\alpha$ and $\alpha'$ such that $h_{\alpha}=h_{\alpha'}$, then $$\Phi_{\alpha}(\eta)=\Phi_{\alpha'}(\eta).$$
\end{lemma}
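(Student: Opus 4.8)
The statement to prove is the Abelian Property (Lemma \ref{abp}): given two legal toppling sequences $\alpha$ and $\alpha'$ with $h_\alpha = h_{\alpha'}$, the resulting configurations are equal.

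The standard approach, going back to Diaconis–Fulton and used in Rolla–Sidoravicius, is a discrete ``diamond lemma'' / local confluence argument. The plan is as follows.

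First, I would establish the key commutation fact: if $x \neq y$ and both $\Phi_x$ and $\Phi_y$ are legal for a configuration $\eta$, then $\Phi_y$ is legal for $\Phi_x(\eta)$, $\Phi_x$ is legal for $\Phi_y(\eta)$, and $\Phi_y \Phi_x(\eta) = \Phi_x \Phi_y(\eta)$. This is because the instruction $\xi_{x, h(x)+1}$ applied at $x$ only changes the particle counts at $x$ and its neighbors, but since $x$ is unstable (has an active particle) in $\eta$, toppling $y \neq x$ cannot remove the active particle at $x$ — it can only add particles there — so $x$ remains unstable, and the instruction used at $x$ is the same in both orders (it depends only on $h(x)$, which is unaffected by toppling $y$). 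The case $x = y$ is immediate since then the two sequences of length two are literally identical. The one subtlety is the sleep instruction $\tau_{x,\rho}$: it acts as the identity unless $x$ has exactly one active particle, so one must check that toppling a neighbor $y$ before the sleep instruction at $x$ does not change whether the sleep fires in a way that breaks commutation — but again, toppling $y$ can only increase the count at $x$, and one checks the finitely many cases (the resulting count at $x$ after both operations is the same regardless of order).

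Second, I would run the standard induction on the common length $k = \sum_x h_\alpha(x)$. If $k = 0$ both sequences are empty and there is nothing to prove. For the inductive step, let $\alpha = (x_1, \ldots, x_k)$ and $\alpha' = (x_1', \ldots, x_k')$. If $x_1 = x_1'$, then after performing this first common toppling we are left with two legal sequences from $\Phi_{x_1}(\eta)$ whose odometers agree (both equal $h_\alpha$ minus one at $x_1$) and the inductive hypothesis applies. If $x_1 \neq x_1'$, since $h_\alpha = h_{\alpha'}$ and $x_1'$ appears in $\alpha'$, it must also appear in $\alpha$; let $j$ be the first index with $x_j = x_1'$. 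Then $x_1, \ldots, x_{j-1}$ are all distinct from $x_j = x_1'$, so by repeatedly applying the commutation fact I can slide the toppling at $x_j$ to the front, producing a new legal sequence $\tilde\alpha = (x_1', x_1, \ldots, x_{j-1}, x_{j+1}, \ldots, x_k)$ with $\Phi_{\tilde\alpha}(\eta) = \Phi_\alpha(\eta)$ and $h_{\tilde\alpha} = h_\alpha$. Now $\tilde\alpha$ and $\alpha'$ share the same first element, and we are back in the previous case. (One should double-check that legality is preserved when sliding — this follows from the commutation fact, which asserts legality of both orders.)

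I expect the main obstacle to be the careful verification of the commutation lemma in the presence of sleep instructions, since $\tau_{x,\rho}$ is not ``monotone'' in the naive sense and one has to track whether a site has one versus more than one active particle; all the ``wrapping around'' and cycle-specific geometry is irrelevant here, as the argument is purely combinatorial and local. A secondary bookkeeping point is making the ``slide to front'' step rigorous: one needs that commuting a legal toppling past a block of legal topplings at distinct sites keeps everything legal and preserves the odometer, which is a straightforward but slightly tedious induction on the block length built on the two-site commutation fact. Since the lemma is quoted from \cite[Lemma 2]{RS12}, in the paper itself one would simply cite it, but the proof sketch above is the argument behind it.
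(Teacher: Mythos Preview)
Your proof sketch is correct and is precisely the standard Diaconis--Fulton/Rolla--Sidoravicius argument behind the cited result. Note, however, that the paper does not actually prove this lemma: it is stated with a direct citation to \cite[Lemma 2]{RS12} and no proof is given in the paper itself, so there is nothing to compare your approach against beyond the original reference, which your sketch accurately summarizes.
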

The next lemma is a consequence of the Abelian Property. It shows that any legal sequence of topplings must occur in any stabilizing sequence (i.e., a legal sequence which leads to all stable sites).

\begin{lemma}(Least Action Principle, \cite[Lemma 1]{RS12})
\label{lap1}
Let $\alpha, \alpha'$ be two legal sequences of topplings such $\alpha$ stabilizes $\eta$, then $h_{\alpha'}\le h_{\alpha},$ i.e.\ all the topplings in $\alpha'$ are also needed in $\alpha.$ 
\end{lemma}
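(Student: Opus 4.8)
The plan is to argue by contradiction, isolating the first toppling of $\alpha'$ that is not ``accounted for'' by $\alpha$ and playing it against the Abelian Property (Lemma~\ref{abp}). Write $\alpha'=(x_1,\dots,x_k)$ and suppose $h_{\alpha'}\not\le h_\alpha$. Let $i$ be the least index with $h_{(x_1,\dots,x_i)}\not\le h_\alpha$ and set $\beta=(x_1,\dots,x_{i-1})$; since adding the single toppling at $x_i$ is what destroys the inequality, we must have $h_\beta\le h_\alpha$ and $h_\beta(x_i)=h_\alpha(x_i)$, while legality of $\alpha'$ forces $x_i$ to carry at least one active particle in $\Phi_\beta(\eta)$. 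I would then show that $\beta$ can be completed to a legal sequence whose odometer is exactly $h_\alpha$; the Abelian Property makes the resulting configuration equal to $\Phi_\alpha(\eta)$, which is stable, yet none of the completing topplings act at $x_i$ (because $h_\alpha(x_i)=h_\beta(x_i)$), and an active particle at a site can leave it or fall asleep only when that very site is toppled --- so $x_i$ would still be unstable, a contradiction.

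The real content is thus the following completion statement: \emph{if $\beta$ is legal for $\eta$ with $h_\beta\le h_\alpha$, where $\alpha$ is a legal stabilizing sequence, then there is $\gamma$ with $\beta\gamma$ legal and $h_{\beta\gamma}=h_\alpha$.} I would prove this by induction on $\sum_x\bigl(h_\alpha(x)-h_\beta(x)\bigr)$. If the sum is $0$ there is nothing to do. Otherwise $h_\beta\ne h_\alpha$, hence $h_\alpha\not\le h_\beta$, so scanning the prefixes of $\alpha=(y_1,\dots,y_m)$ we may pick the least $j$ with $h_{(y_1,\dots,y_j)}\not\le h_\beta$. As before this forces $h_{(y_1,\dots,y_{j-1})}\le h_\beta$, $h_{(y_1,\dots,y_{j-1})}(y_j)=h_\beta(y_j)<h_\alpha(y_j)$, and $y_j$ unstable in $\Phi_{(y_1,\dots,y_{j-1})}(\eta)$. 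If one also knows $y_j$ is unstable in $\Phi_\beta(\eta)$, then a single toppling at $y_j$ is legal, keeps the odometer $\le h_\alpha$, and drops $\sum_x\bigl(h_\alpha(x)-h_\beta(x)\bigr)$ by one, closing the induction.

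The delicate step --- and what I expect to be the main obstacle --- is exactly the assertion just invoked: transferring instability of a site between two legal sequences with comparable odometers. What is needed is a monotonicity lemma of the shape \emph{if $\sigma_1,\sigma_2$ are legal for $\eta$ with $h_{\sigma_1}\le h_{\sigma_2}$ and $h_{\sigma_1}(x)=h_{\sigma_2}(x)$, then $\Phi_{\sigma_2}(\eta)$ has at least as many active particles at $x$ as $\Phi_{\sigma_1}(\eta)$ does}, applied to $\sigma_1=(y_1,\dots,y_{j-1})$ and $\sigma_2=\beta$. Morally this is clear: relative to $\sigma_1$, the sequence $\sigma_2$ topples $x$ the same number of times --- hence consumes the same prefix of the stack $\xi_{x,\cdot}$ and removes or puts to sleep the same particles at $x$ --- but topples each neighbour $x\pm1$ at least as often, and every extra toppling of a neighbour either leaves $x$ untouched or deposits one more particle on it. Turning this into a proof is subtle because falling asleep is not a purely local event, so one cannot simply reason site by site; the clean route is an induction over toppling sequences in the same spirit as the proof of the Abelian Property, and this is precisely the comparison carried out in \cite{RS12}. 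Granting it, the completion statement and then the Least Action Principle follow as above.
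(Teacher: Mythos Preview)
The paper does not actually prove Lemma~\ref{lap1}; it merely states it and cites \cite[Lemma~1]{RS12}, so there is no in-paper proof to compare against. Your argument is essentially the standard proof from \cite{RS12}: isolate the first ``excess'' toppling in $\alpha'$, complete the prefix $\beta$ to a legal sequence with odometer $h_\alpha$, and derive a contradiction with stability; the completion is built inductively via the monotonicity comparison you identify, which in \cite{RS12} is phrased in terms of the partial order $0<\rho<1<2<\cdots$ on site states (their Lemma~3 / Lemma~4), and your sketch correctly locates this as the technical heart of the matter.
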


Observe that Lemma \ref{lap1} immediately implies that any two stabilizing sequence must lead to the same odometer function, and in turn by Lemma \ref{abp} this implies that the final configuration after stabilization is also independent of the stabilizing sequence. This will imply that for any stabilizing sequence $\alpha$, for an initial configuration of product $\mbox{Ber}(\mu)$ many particles, we have 
$$T_n(\mu,\lambda)=\sum_{x\in \cC_n} h_{\alpha}(x).$$
This will be the fundamental tool used in our proofs. Finally we need another lemma to compare the stabilizing sequences which formalizes the intuitively plausible statement: for a stabilizing sequence where we ignore some of the sleep instructions, the total number of jumps is larger than if we hadn't ignored those sleep instructions. 
Formally we need to introduce a new notation to define precisely the meaning of ignoring a sleep instruction. Recall the stack of instructions $\sI$ from \eqref{stack} and the action of the instructions $\tau_{x,\rho} $ and $\tau_{x,x\pm1}$ on the particle configuration $\eta.$ Let us introduce a null instruction $\frak{n}$ 
which acts on a configuration to create no change, i.e., $\frak{n} \eta=\eta$ for all $\eta.$
Now given a stack of instructions, $\sI=\{\xi_{(\cdot,\cdot)}\},$ as in \eqref{stack}, let $\sI'=\{\xi'_{(\cdot,\cdot)}\},$ be another stack, with the property that for each $(x,j)$ either , $\xi_{(x,j)}=\xi'_{(x,j)},$ or $\xi_{(x,j)}=\tau_{x,\rho},$ and $\xi'_{(x,j)}=\frak{n}.$
Thus informally $\sI'$ is any set of toppling instructions obtained from $\sI$ by ignoring certain sleep instructions.
\begin{lemma}\cite[Lemma 5]{RS12} Given any $\sI,\sI'$ as above, and any initial configuration $\eta,$ let $\alpha$ and $\alpha'$ be two legal toppling sequences stabilizing $\eta,$ using instructions from $\sI$ and $\sI',$ respectively. Let $h_{\alpha}(\cdot)$ and $h_{\alpha'}(\cdot)$ be the respective odometer functions. Then $h_{\alpha'}(x)\ge h_{\alpha}(x)$ for every $x\in \cC_n.$ 
\label{l:sleepmon}
\end{lemma}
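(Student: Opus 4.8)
The plan is to deduce the lemma from the Least Action Principle (Lemma \ref{lap1}), applied to the environment $\sI'$. Concretely, I will show that the sequence $\alpha$ — which is legal and stabilizing for $\sI$ — remains a \emph{legal} sequence (not necessarily stabilizing) for $\sI'$ with the same initial configuration $\eta$. Granting this, since $\alpha'$ is legal and stabilizes $\eta$ using $\sI'$ while $\alpha$ is merely legal for $\sI'$, Lemma \ref{lap1} (with the stack $\sI'$) gives $h_{\alpha}(x)\le h_{\alpha'}(x)$ for every $x\in\cC_n$, which is exactly the assertion.

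To prove that $\alpha$ is legal for $\sI'$, introduce the partial order $\preceq$ on particle configurations: write $\zeta\preceq\zeta'$ if at every site $x$ either $\zeta(x)=\zeta'(x)$, or $\zeta(x)=\rho$ and $\zeta'(x)=1$ (informally, $\zeta'$ is obtained from $\zeta$ by waking up some solitary sleeping particles). The key monotonicity claim is: if $\zeta\preceq\zeta'$ and $x$ is unstable in $\zeta$, then $x$ is unstable in $\zeta'$, and for the corresponding pair of instructions $(\xi_{(x,j)},\xi'_{(x,j)})$ one has $\xi_{(x,j)}\zeta\preceq\xi'_{(x,j)}\zeta'$. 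Note first that if $x$ is unstable in $\zeta$ then $\zeta(x)\ge 1$ is active, so the discrepant case $\zeta(x)=\rho$ is impossible and $\zeta'(x)=\zeta(x)$; hence $x$ is unstable in $\zeta'$.

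The rest of the claim is a short case analysis on the instruction type. For a jump $\tau_{x,x\pm1}$ applied on both sides, the counts at $x$ and at $x\pm1$ change identically; the only subtlety is that if the target site is $\rho$ in $\zeta$ (hence becomes $2$ after the jump) and is $\rho$ or $1$ in $\zeta'$, it becomes $2$ in $\zeta'$ as well, so $\preceq$ (in fact equality at that site) is preserved. For a sleep instruction $\tau_{x,\rho}$ kept on both sides the effect is identical: a solitary active particle at $x$ becomes $\rho$ on both sides, and if $\zeta(x)\ge 2$ nothing changes. For a sleep instruction turned into $\frak{n}$, the $\sI$-side turns a solitary active particle at $x$ into $\rho$ while the $\sI'$-side leaves it as $1$ — precisely the discrepancy allowed by $\preceq$ — and if $\zeta(x)\ge 2$ both sides are unchanged. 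Now run $\alpha$ simultaneously on $(\sI,\eta)$ and on $(\sI',\eta)$, toppling the same site at each step; starting from $\eta\preceq\eta$ and using that $\alpha$ is legal for $\sI$, the monotonicity claim gives by induction that after each step the $\sI'$-configuration dominates the $\sI$-configuration and that the next site toppled is unstable in the $\sI'$-run. Hence $\alpha$ is legal for $(\sI',\eta)$, and the proof concludes via Lemma \ref{lap1} as described above.

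I expect the only real obstacle to be pinning down the definition of $\preceq$ so that it is genuinely closed under the paired dynamics — in particular handling a jump that lands on a site which is sleeping in the $\sI$-run but already awake in the $\sI'$-run — together with checking that legality transfers in the correct direction, from $\sI$ to $\sI'$ rather than conversely.
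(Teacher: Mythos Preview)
The paper does not supply its own proof of this lemma: it is simply quoted from \cite{RS12} (as Lemma~5 there) and invoked as a black box. So there is nothing in the present paper to compare your argument against.

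That said, your proof is correct and is exactly the standard route to this monotonicity statement. The reduction to the Least Action Principle for the modified stack $\sI'$ is the right framework, and the key step --- showing that $\alpha$ remains legal for $\sI'$ --- is handled cleanly by your partial order $\preceq$ and the accompanying case analysis. The only places one might want to be slightly more explicit are (i) that the two parallel runs use the \emph{same} instruction index at each step (since the odometers agree step by step), so the paired instructions $(\xi_{(x,j)},\xi'_{(x,j)})$ really are the ones being applied, and (ii) the jump case at the target site when $\zeta(y)\in\N$ (not $\rho$), which forces $\zeta'(y)=\zeta(y)$ and is trivially preserved --- you focused only on the $\rho$ versus $1$ subtlety. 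Neither of these is a gap; both are implicit in what you wrote.
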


Often our argument will be based on running the ARW dynamics on certain sub-intervals of $\cC_n,$ and hence to be completely formal one needs to introduce stacks corresponding to such intervals. However to avoid introducing additional notation, we will identify the latter in the natural way with the corresponding subset of stacks of $\sI.$

\section{Fast fixation under low density}  

We prove Theorem \ref{sub} in this section. By the discussion at the end of last section and Lemma \ref{l:sleepmon}, we shall provide an algorithm for toppling which will ignore some sleep instruction and which stabilizes all sites in $\cC_n$ within $O(n \log^2 n)$ many topplings with high probability. (Note that the probability here is over both the random initial configuration and also the random stack of instructions.) We shall formalize the sketch provided in Section \ref{sop} to build the toppling procedure. 

Let $\eta$ be a initial configuration of particles on $\cC_n$ distributed according to law $\P^{\mu}$. Also fix a realization $\mathscr{I}$ of the stack of instructions. For the remainder of this section, we shall always talk about toppling the configuration $\eta$ sequentially using instructions from $\mathscr{I}$.

\subsection{Random Walk estimates}
As explained before, the first phase will be to topple any unstable site that is not a $\mathbf{Source}$ and ignore any sleep instructions encountered in the process. So at the end of this phase of the toppling all the particles will be herded at the $\mathbf{Source}$ vertices. Let $\eta^{(1)}$ denote the configuration at the end of this phase, which is supported on $\mathbf{Source}$ vertices. Because we ignore the sleep instructions, this procedure is the same as letting all the particles in $\eta$ be independent simple symmetric random walks stopped at hitting one of the sources. Thus we shall need a couple of basic random walk estimates to estimate the distribution $\eta^{(1)}$, as well as the total number of jumps to reach that configuration.

Recall that the $i^{th}$ source is located at the vertex $z_i:=(i-\frac{1}{2})c_0\log n$. Let $\eta^{(1)}_i$ denote the number of particles $\eta^{(1)}$ has in this vertex. Also recall that to reduce notation we assumed that $n$ is an integer multiple of $c_0\log n$, which is also an integer.  Let $K=\frac{n}{c_0\log n}$ be the number of sources. In the general case we can take all the intervals to  be $\lfloor c_0\log n\rfloor$ except possibly one which has length between $\lfloor c_0\log n\rfloor$ and  $2 \lfloor c_0\log n\rfloor.$

The following lemma is our first random walk estimate.

\begin{lemma}
\label{l:rw1}
For each $\e>0$, there exists $a>0$ such that for all  large enough $c_0$ and $n$, 
\begin{align*}
\P\left(\sup_{1\le i\le K} |\eta^{(1)}_{i}-\mu c_0\log n| \ge \e c_0 \log n\right) \le e^{-a c_0 \log n}.
\end{align*}
\end{lemma}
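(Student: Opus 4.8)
The plan is to analyze Phase 1 as a collection of independent simple random walks on $\cC_n$, each stopped when it first hits the source set $\mathbf{Source} = \{z_1, \dots, z_K\}$ with $z_i = (i-\tfrac12)c_0 \log n$. Since the walks are independent, the final location of a particle that started at vertex $x$ is determined by which source it hits first, and this depends only on $x$ and the walk's trajectory. The key observation is that for a particle started at $x$, the first source it hits is (almost surely) one of the two sources $z_i, z_{i+1}$ bracketing the arc containing $x$; wrapping all the way around the cycle to reach a farther source has probability zero effect on the limiting hitting distribution because it must pass through a bracketing source first. So each particle's contribution to $\eta^{(1)}$ lands in one of its two neighboring sources, with probabilities given by the classical gambler's ruin formula: a particle at distance $d$ from $z_i$ and $c_0\log n - d$ from $z_{i+1}$ hits $z_i$ with probability $1 - d/(c_0\log n)$.

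First I would fix a source $z_i$ and write $\eta^{(1)}_i = \sum_x N_x$, where $N_x$ is the (random) number of the $\eta(x)$ initial particles at $x$ that end up at $z_i$; only $x$ in the two arcs adjacent to $z_i$ contribute. Conditioning on the initial configuration $\eta$, the $N_x$ are independent, and $N_x$ is a sum of $\eta(x)$ i.i.d.\ Bernoullis with the gambler's-ruin success probability $p(x)$. A direct computation of $\sum_x \eta(x) p(x)$, using $\E[\eta(x)] = \mu$ and summing the linear ruin probabilities over each arc of length $c_0 \log n$, gives mean $\approx \mu c_0 \log n$ (the two adjacent half-contributions combine to one full interval's worth of mass). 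So $\E[\eta^{(1)}_i] = \mu c_0 \log n (1 + o(1))$. Then I would apply a Chernoff/Hoeffding bound: $\eta^{(1)}_i$ is a sum of at most $2 c_0 \log n$ (or a constant multiple thereof) independent $\{0,1\}$-valued variables — combining the randomness of $\eta$ and of the ruin events into one array of independent bits indexed by (site, particle) — so standard concentration yields $\P(|\eta^{(1)}_i - \mu c_0 \log n| \ge \e c_0 \log n) \le 2 e^{-a' c_0 \log n}$ for some $a' = a'(\e) > 0$. Finally, a union bound over the $K = n/(c_0 \log n) \le n$ sources, absorbing the polynomial factor $K$ into the exponential $e^{-a' c_0 \log n}$ by taking $c_0$ large enough (so that $a' c_0 \log n - \log n$ is still of order $c_0 \log n$), gives the claimed bound with $a = a'/2$, say.

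The one genuinely delicate point — the part I'd be most careful about — is the effect of the cyclic geometry: in principle a random walk started at $x$ could wind around the entire cycle before settling, so the hitting distribution is not exactly gambler's ruin on an interval. The resolution is that to reach any source other than the two bracketing $x$, the walk must first pass through a bracketing source, so it is stopped there; hence the hitting distribution is \emph{exactly} the two-point gambler's-ruin distribution on the bracketing interval, with no correction. This also means the number of distinct sources receiving mass from a given $x$ is exactly the two neighbors, keeping the summation bookkeeping clean. A secondary technical nuisance is that $c_0 \log n$ need not be an integer and the intervals need not all have equal length (as the paper notes, one interval may have length up to $2\lfloor c_0 \log n\rfloor$); this only changes constants in the ruin probabilities and in the count of contributing bits, and is harmless for the stated bound. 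Everything else — the Chernoff bound and the union bound — is routine once the independence structure and the exact hitting law are in hand.
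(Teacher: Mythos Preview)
Your proposal is correct and follows essentially the same approach as the paper: gambler's ruin to identify the two bracketing sources and the linear hitting probabilities, combining the initial Bernoulli occupancy and the ruin outcome into a single independent Bernoulli per site, Hoeffding/Chernoff concentration for one source, and a union bound absorbed by taking $c_0$ large. The paper is slightly terser (it writes the single indicator $Z_j$ with mean $\mu g(j)$ directly rather than first conditioning on $\eta$), but the argument and all its ingredients coincide with yours.
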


\begin{proof} 
For this proof we shall forget about $\mathscr{I}$ and the toppling procedure, and treat $\eta^{(1)}$ as the configuration obtained from letting all the particles of $\eta$ perform independent simple symmetric random walks stopped at hitting any of the source vertices (thus particles initially located at source vertices do not move at all). We first recall a standard concentration inequality for sums of independent but not necessarily identically distributed Bernoulli variables that we will use later in the proof. 
Let $X_1,\ldots ,X_k$ be independent Bernoulli variables with means $p_1,\ldots,p_k$ respectively. Let $\nu=p_1+\ldots+p_k.$
Then 
\begin{align}
\label{azuma}
\P\left(\bigg|\sum_{i}^k X_i-\nu \bigg|\ge \delta \nu\right)\le e^{-\frac{\delta^2 \nu^2}{k}.}
\end{align}

Let us consider the first source $z_1$. Clearly, any particle that ended up at $z_1$ in $\eta^{(1)}$ must have been located at some $j\in V_1= (-\frac{c_0}{2}\log n, \frac{3c_0}{2}\log n)$. Let $Z_j$ denote the indicator of the event that there was a particle at $j$ in $\eta$ and that ended up at $z_1$ in $\eta^{(1)}$. Clearly $$\eta^{(1)}_{i}=\sum_{j\in V_1} Z_j.$$ Observe that a standard Gambler's ruin calculation yields that the probability that a random walk started at $j\in V_1$ would reach $z_1$ before reaching either $z_0$ or $z_2$ is $g(j)=1-\frac{|j-\frac{c_0}{2}\log n|}{c_0 \log n}$. It follows that $Z_j$'s are independent Bernoulli variables with mean $\mu g(j)$. Observe that $$\left |\sum_{j\in V_1} g(j)-c_0\log n \right |\leq 1,$$ and hence using \eqref{azuma} we get that for each $\e>0$, and for all $n$ sufficiently large
\begin{equation}
\label{e:Z1}
\P\left( |\eta^{(1)}_{1}-\mu c_0\log n| \ge \e c_0 \log n \right) \le e^{-2a c_0 \log n}
\end{equation}
for some $a$ depending on $\mu, \e$ (but not on $c_0$).
By the rotational symmetry of $\cC_n$ and of the law of the initial configuration $\mathbb{P}^\mu$, we have the same bound for all $\eta^{(1)}_{i}$ for all $1\le i \le K$. Now since the number of source vertices is less than $n$,  using \eqref{e:Z1} and a union bound over all source vertices, we get 
$$\P\left (\sup_{1\le i\le K} |\eta^{(1)}_{i}-\mu c_0\log n| \ge \e c_0 \log n \right ) \le n e^{-2a c_0 \log n} \leq e^{-a c_0 \log n}$$
where we have chosen $c_0$ sufficiently large (depending on $\mu, \e$) so that the last inequality holds.  This completes the proof of the lemma. 
\end{proof}

Recall the basic setting of toppling sites using instructions from stack  $\mathscr{I}$. The next lemma will prove that  the total number of instructions explored until the end of phase one is at most order $n\log ^2 n$ with high probability. 

\begin{lemma}
\label{l:nstep1}
Let $T^{(1)}(\mu,\lambda)$ denote the total number of instructions that have been explored until the end of phase one. Then there exists $C_2,\theta >0$ such that 
$$ \P\left( T^{(1)}(\mu,\lambda)> C_2 n\log^2 n \right)\leq n^{-\theta}.$$
\end{lemma}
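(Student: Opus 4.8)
The plan is to bound $T^{(1)}(\mu,\lambda)$, which by the discussion in the excerpt equals (under the coupling where we ignore sleep instructions during Phase 1) the total number of steps taken by all particles performing independent simple symmetric random walks on $\cC_n$, each stopped upon hitting the nearest source vertex. I would organize the bound as a sum over particles: writing $N$ for the number of initial particles and $\sigma_k$ for the number of steps the $k$-th particle takes before absorption into $\mathbf{Sources}$, we have $T^{(1)} = \sum_{k=1}^N \sigma_k$. There are two sources of randomness to control: the number of particles $N$, and the random walk absorption times $\sigma_k$.

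First I would dispose of $N$: since the initial configuration is product $\mathrm{Ber}(\mu)$, $N \sim \Bin(n,\mu)$, so $N \le 2\mu n \le 2n$ except with probability exponentially small in $n$ (by a standard Chernoff bound, or by \eqref{azuma} with all $p_i = \mu$). Condition on $N \le 2n$ henceforth. Next, for each particle the relevant quantity is the exit time of a simple symmetric random walk from an interval of length $O(c_0 \log n)$ — namely the particle sits in some $V_i$, a window of width $c_0\log n$ around a source, with sources $z_{i-1}, z_i, z_{i+1}$ bounding it, so $\sigma_k$ is dominated by the time for a SSRW to exit an interval of length at most $c_0\log n$. The key classical fact is that for a SSRW exiting an interval of length $L$, the exit time $\tau$ satisfies $\E[\tau] \le L^2$ and, more importantly, a tail bound $\P(\tau > t) \le C e^{-ct/L^2}$ for constants $C,c>0$ (obtained e.g.\ by splitting time into blocks of length $\asymp L^2$ and noting each block independently has a constant chance of producing an exit, or via the standard eigenvalue/coupling argument). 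With $L = c_0 \log n$, this gives $\P(\sigma_k > s \log^2 n) \le C e^{-c' s}$ for an appropriate constant once $s$ is large relative to $c_0^2$.

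From here the bound on the sum is routine. One clean way: let $M = \max_{1\le k \le N} \sigma_k$. A union bound over the (at most $n$) windows and the (at most $2n$) particles, using the per-particle tail bound with $s \asymp \log n$, gives $\P(M > C_3 \log^3 n) \le n^{-\theta'}$ for suitable $C_3$; then on this event $T^{(1)} = \sum_k \sigma_k \le N \cdot M \le 2n \cdot C_3\log^3 n$, which is even a bit stronger than needed. Alternatively, to get the cleaner $n\log^2 n$ scaling claimed, I would instead bound $T^{(1)}$ via its conditional expectation: $\E[T^{(1)} \mid N] = \sum_k \E[\sigma_k] \le N (c_0\log n)^2 \le 2 c_0^2\, n \log^2 n$, and then use a concentration argument — the $\sigma_k$ are independent given $N$ with exponential-type tails, so a Bernstein-type inequality (valid for sums of independent sub-exponential variables) yields $\P(T^{(1)} > 2\,\E[T^{(1)}\mid N] \mid N) \le \exp(-c\, n/\log^2 n)$ on the event $N \le 2n$, since the sum of $\asymp n$ sub-exponential terms each of mean $\asymp\log^2 n$ concentrates at scale well below its mean. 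Combining with the bound on $N$ and absorbing constants gives $\P(T^{(1)} > C_2 n\log^2 n) \le n^{-\theta}$.

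The main obstacle, such as it is, is purely technical: making the "exit time of SSRW from an interval of length $L$ has exponential tails with rate $\asymp L^{-2}$" precise and uniform, and then pushing it through a concentration inequality for a sum of $\asymp n$ independent sub-exponential variables without losing the $\log^2 n$ scaling (a naive $N \cdot M$ union bound costs an extra $\log n$). There are no conceptual difficulties — everything reduces to standard one-dimensional random walk estimates and Chernoff/Bernstein bounds — but care is needed with the order of conditioning (first fix $N$, then the walks) and with the fact that distinct particles' windows $V_i$ overlap, which is harmless since for the upper bound we may always dominate $\sigma_k$ by the exit time of the smallest interval of length $c_0\log n$ containing the particle with both endpoints in $\mathbf{Sources}$, and these exit times are, particle by particle, independent functions of the independent stacks.
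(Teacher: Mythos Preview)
Your approach is essentially the same as the paper's: bound the number of particles by a Chernoff bound, dominate each particle's absorption time by the exit time of a random walk from an interval of length $c_0\log n$ (which the paper packages as stochastic domination by $c_0^2\log^2 n$ times a geometric variable, i.e.\ your sub-exponential tail), and then apply a concentration inequality for the sum of these independent variables. One small correction: since ignored sleep instructions are still counted in $T^{(1)}$, the relevant walks are $\tfrac{\lambda}{1+\lambda}$-lazy rather than simple, which only affects the constants.
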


For this step we shall need a concentration result for sums of geometric random variables. Although the result we need at this step is pretty standard, we shall need a more complicated variant later on, and we also need a concentration for some of exponential random variables for a later part of the argument. For convenience we quote, at this point, the following result from \cite{J14} which shall cover all our needs.

\begin{lemma}
\label{lem2} 
The following concentration results hold:
\begin{enumerate} \item[(i)] Fix $p \in (0,1)$, and let $Y_1, Y_2, \ldots$ be i.i.d.\ geometric random variables with parameter $p$, so $\E Y_1 = 1/p$. Then for any $\delta > 0$ and any $M \in \N$, 
\begin{equation*}
\p\Big(\Big| \sum_{i=1}^M Y_i - M/p \Big| \geq \delta \frac{M}{p} \Big) \leq 2\exp\Big(-(\delta - \log(1+\delta)) \frac{M}{p} \Big).
\end{equation*}

\item[(ii)] Suppose $Z_1, Z_2, \ldots$ are independent exponential random variables with means $a_1, a_2, \ldots$, and set $a_* = \inf_i a_i$, $\kappa = \sum_i a_i$. Then for any $\delta > 0$ and $M \in \N$, 

\begin{equation*}
\p\Big(\Big| \sum_{i=1}^M Z_i - \kappa \Big| \geq \delta \kappa \Big) \leq 2 \exp\Big(-a_* \kappa (\delta - \log(1+\delta) \Big).
\end{equation*} 
\end{enumerate}
\end{lemma}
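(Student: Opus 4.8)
The plan is to prove both inequalities by the Cram\'er--Chernoff exponential-moment method, applied once to the upper tail and once to the lower (the factor $2$ is the union over the two); I describe one tail. The only inputs are the explicit moment generating functions: $\E e^{tY}=pe^{t}/(1-(1-p)e^{t})$ for $Y\sim\mathrm{Geom}(p)$ on $\{1,2,\dots\}$ when $e^{t}<(1-p)^{-1}$ (and for all $t<0$), and $\E e^{tZ}=(1-at)^{-1}$ for $Z$ exponential with mean $a$ when $t<1/a$ (in particular for $t\le 0$). I should flag at the outset that the printed constants cannot be literally right: taking all $a_{i}\equiv 1$ in (ii) the fluctuations are exactly those of a $\mathrm{Gamma}(M,1)$ variable, which forces the exponent to be $M(\delta-\log(1+\delta))$, not $a_{\ast}\kappa\,(\delta-\log(1+\delta))$; the Cram\'er computation below yields the correct normalisations, namely $M(\delta-\log(1+\delta))$ in (i) and $(\kappa/a^{\ast})(\delta-\log(1+\delta))$ with $a^{\ast}=\max_{i\le M}a_{i}$ in (ii), which is what \cite{J14} actually proves.

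For part (i) put $S_{M}=\sum_{i=1}^{M}Y_{i}$, so $\E S_{M}=M/p$. The upper-tail Chernoff bound is $\p(S_{M}\ge(1+\delta)M/p)\le\inf_{t>0}\exp(-t(1+\delta)M/p+M\log\E e^{tY})$; differentiating the exponent in $t$ gives the explicit minimiser $e^{t^{\ast}}=(1-p/(1+\delta))/(1-p)$, and substituting $t^{\ast}$ collapses the exponent to a closed form in $p$ and $\delta$. One then checks, by a short one-variable calculus argument, that this closed form is $\le -M(\delta-\log(1+\delta))$ for every $p\in(0,1)$ (the difference vanishes as $p\to 0$, where a single $\mathrm{Geom}(p)$ is a scaled exponential, and has the correct sign of derivative in $p$). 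The lower tail is identical with $e^{-tS_{M}}$ replacing $e^{tS_{M}}$, and its optimised exponent comes out as $-M(-\delta-\log(1-\delta))$; since $-\delta-\log(1-\delta)\ge\delta-\log(1+\delta)$ for $\delta\in(0,1)$ (compare power series), both tails are bounded by $\exp(-M(\delta-\log(1+\delta)))$. A fallback, should the calculus be unpleasant, is the negative-binomial identities $\{S_{M}\ge k\}=\{\Bin(k-1,p)\le M-1\}$ and $\{S_{M}\le k\}=\{\Bin(k,p)\ge M\}$, which reduce everything to the textbook multiplicative Chernoff bound for the binomial.

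For part (ii) write $Z_{i}=a_{i}E_{i}$ with $E_{i}$ i.i.d.\ $\mathrm{Exp}(1)$, $S=\sum_{i=1}^{M}Z_{i}$, and $a^{\ast}=\max_{i\le M}a_{i}$. The upper-tail Chernoff bound is $\p(S\ge(1+\delta)\kappa)\le\inf_{0<t<1/a^{\ast}}\exp(-t(1+\delta)\kappa-\sum_{i}\log(1-a_{i}t))$. The device for the non-identical means is the elementary monotonicity fact that $x\mapsto-x^{-1}\log(1-x)$ is nondecreasing on $(0,1)$: with $a_{i}t\le a^{\ast}t$ this gives $-\log(1-a_{i}t)\le(a_{i}/a^{\ast})(-\log(1-a^{\ast}t))$, hence $-\sum_{i}\log(1-a_{i}t)\le(\kappa/a^{\ast})(-\log(1-a^{\ast}t))$, reducing the exponent to $(\kappa/a^{\ast})[-u(1+\delta)-\log(1-u)]$ with $u=a^{\ast}t\in(0,1)$. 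This is minimised at $u=\delta/(1+\delta)$ with value $-(\delta-\log(1+\delta))$, so $\p(S\ge(1+\delta)\kappa)\le\exp(-(\kappa/a^{\ast})(\delta-\log(1+\delta)))$. The lower tail is symmetric, using instead that $x\mapsto x^{-1}\log(1+x)$ is nonincreasing on $(0,\infty)$, and produces the larger exponent $(\kappa/a^{\ast})(-\delta-\log(1-\delta))$; combining gives the two-sided bound with rate function $\delta-\log(1+\delta)$.

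The work here is bookkeeping rather than ideas. In (i) the one delicate point is confirming, uniformly in $p\in(0,1)$, that the explicitly optimised Chernoff exponent is genuinely dominated by the clean rate function $\delta-\log(1+\delta)$ (respectively $-\delta-\log(1-\delta)$ on the lower side) --- a brief monotonicity-in-$p$ estimate. In (ii) the crux is exactly the monotonicity lemma for $-x^{-1}\log(1-x)$, which is what allows the sum over distinct means to be replaced by the single extremal mean. Both computations are carried out explicitly in \cite{J14}, so the write-up may either reproduce the few lines or simply cite them.
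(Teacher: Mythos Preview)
The paper does not prove this lemma at all: it simply quotes it from \cite{J14}. Your Cram\'er--Chernoff argument is exactly the method used in \cite{J14}, so in substance you are reproducing the cited proof rather than offering an alternative.

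You are also right that the constants in the paper's statement are misprinted. In (i) the exponent should be $M(\delta-\log(1+\delta))$ rather than $(M/p)(\delta-\log(1+\delta))$: your $p\to 0$ sanity check (the sum rescales to a $\mathrm{Gamma}(M,1)$ variable whose tail probability is bounded away from zero, while the printed bound tends to $0$) shows the stated inequality is actually false. In (ii) the exponent should be $(\kappa/a^{\ast})(\delta-\log(1+\delta))$ with $a^{\ast}=\max_{i\le M} a_i$, not $a_{\ast}\kappa(\delta-\log(1+\delta))$; a quick dimensional check (the $a_i$ carry units, so $a_{\ast}\kappa$ cannot appear in a dimensionless exponent) already flags this. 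These are transcription errors from \cite{J14}, and the corrected bounds are precisely what the later applications in the paper (Lemmas~\ref{stop12}, \ref{l:trap}, \ref{l:erosion}) actually require: in each case the exponent is still at least linear in the relevant parameter, so nothing downstream is affected.
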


For the proof of Lemma \ref{l:nstep1} we need the following result. 

\begin{lemma}
\label{stop12} 
Let $n$ be an integer multiple of $r$ and consider fixed locations at distance $r$ on $\cC_n$ (without loss of generality take them to be multiples of $r$). Let $k$ independent identically distributed lazy symmetric random walks  started from arbitrary locations on $\cC_n$ and stopped on hitting the nearest integer multiple of $r$. Let $T_i$ be the total number of steps taken by the $i^{th}$ walk (including the lazy steps).
Then there exists $C>0$ such that $$\P\left(\sum_{i=1}^k T_i \ge Cr^2 k\right) \le e^{-Ck},$$ where $C$ depends on the laziness parameter, (i.e., the probability of not jumping). 
\end{lemma}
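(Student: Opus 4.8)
The plan is to reduce the claim to a single-walk tail estimate and then take a union bound over the $k$ walks, exploiting the exponential nature of the resulting tail so that the union bound loses only a factor $k$ (which is absorbed by lowering the constant $C$). First I would analyze one lazy symmetric random walk $S$ on $\cC_n$ started at an arbitrary vertex and stopped at the hitting time $T$ of the nearest multiple of $r$. Since the walk is killed at the first multiple of $r$ it reaches, $T$ is dominated by the exit time of a symmetric lazy walk from an interval of length at most $r$ (the two adjacent multiples of $r$ bracketing the start point), so it suffices to bound the exit time $T_{(0,r)}$ of such a walk from $(0,r)$, started anywhere inside. A standard computation (either via the optional stopping theorem applied to $S_t^2 - \sigma^2 t$, where $\sigma^2$ is the per-step variance determined by the laziness parameter, or via the explicit eigenfunction $\cos(\pi (x - r/2)/r)$ of the walk's generator on the interval) gives $\E T \le C_1 r^2$ for a constant $C_1$ depending only on the laziness probability.

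Next I would upgrade this mean bound to an exponential tail of the form $\p(T \ge t) \le C_2 e^{-c t / r^2}$, uniformly over the starting location. The cleanest route is the eigenfunction/restart argument: the function $\phi(x) = \cos(\pi(x-r/2)/r)$ is positive on $(0,r)$, bounded below by a constant $c_* > 0$ on the middle third, and satisfies $\E_x[\phi(S_1)\mathbf 1_{T>1}] = \theta \phi(x)$ with $\theta = \theta(r) < 1$ and $1 - \theta \asymp 1/r^2$; hence $\p_x(T > m) \le \phi(x) \theta^m / \min \phi \le C \theta^m$, which is the desired geometric-type tail once one notes $\theta^m \le e^{-(1-\theta)m} \le e^{-c m/r^2}$. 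Alternatively one can split $(0,r)$ into the event that the walk exits within one "block time" $\asymp r^2$: by the $\E T \le C_1 r^2$ bound and Markov's inequality, from any starting point the walk exits within $2C_1 r^2$ steps with probability at least $1/2$, and applying the Markov property at times $2C_1 r^2, 4 C_1 r^2, \dots$ gives $\p(T \ge 2 j C_1 r^2) \le 2^{-j}$, i.e. the same exponential tail with slightly worse constants. Either way, $T_i$ is stochastically dominated by $C_3 r^2 G_i$ where $G_i$ is a geometric random variable (independent over $i$), with $C_3$ and the geometric parameter depending only on the laziness parameter.

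Finally I would conclude by applying the geometric concentration bound Lemma~\ref{lem2}(i) to $\sum_{i=1}^k G_i$: choosing $M = k$, $p$ the geometric parameter, and $\delta$ a fixed constant (say $\delta = 1$), Lemma~\ref{lem2}(i) gives $\p\big(\sum_i G_i \ge (1+\delta) k/p\big) \le 2 e^{-(\delta - \log(1+\delta)) k / p} \le e^{-c' k}$ for all $k$ large, and absorbing the constant $2$ and the "large $k$" caveat by shrinking $c'$ (for small $k$ the probability of the event in the lemma statement can be made $0$ by taking $C$ large enough that $\sum T_i < C r^2 k$ deterministically is impossible to violate — actually one just notes $\p(\cdot) \le 1 \le e^{-Ck}$ fails, so instead one keeps $k$ in a suitable range or simply notes the bound is only asserted up to the constant). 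Then $\sum_i T_i \le C_3 r^2 \sum_i G_i$, so $\p(\sum_i T_i \ge C_3(1+\delta) r^2 k) \le e^{-c'k}$, which is the statement with $C = \max(C_3(1+\delta), c')$. The main obstacle I anticipate is not any single step but making the constants genuinely uniform in the (arbitrary, possibly adversarial) starting locations and in $r$ simultaneously — this is exactly what the eigenfunction argument handles cleanly, since the bound $\p_x(T>m) \le C\theta^m$ holds with the same $C,\theta$ for every $x \in (0,r)$ — and secondarily, being careful that the stopping rule "nearest multiple of $r$" is correctly dominated by exit from the bracketing interval, which is immediate since hitting either endpoint of that interval stops the walk at or before time $T_i$.
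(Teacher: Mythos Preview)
Your executed argument is essentially the paper's proof: dominate each $T_i$ by $C r^2$ times an independent geometric random variable (via the ``restart'' observation that from any point the walk exits $(0,r)$ in $O(r^2)$ steps with probability at least $1/2$), then apply Lemma~\ref{lem2}(i) to the sum of geometrics. The eigenfunction route you sketch is a valid but unnecessary alternative, and your opening sentence about a union bound over the $k$ walks is a red herring that you (correctly) abandon in favor of the sum-of-geometrics concentration.
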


\begin{proof}
Standard simple random walk estimates show that if $\tau$ is the hitting time of $\{0,r\}$ for a lazy simple random walk on $\Z$ (with laziness $p$), then for each $x\in \llbracket 0,r\rrbracket$, we have $\P_x(\tau < Cr^2)\ge  \frac{1}{2}$ where $\P_x,$ denotes the probability measure for the random walk started at $x$ and $C$ depends only on the laziness parameter. It follows then that for any arbitrary location of the $k$ particles, each $T_i$ is stochastically dominated by $r^2G$ where $G$ is a geometric random variable with mean $2$. The statement now follows from part (i) of Lemma \ref{lem2}.
\end{proof}

We are now ready to prove Lemma \ref{l:nstep1}.

\begin{proof}[Proof of Lemma \ref{l:nstep1}]
First notice that the total number of instructions used by the particles ignoring the sleep instructions is the same as the total number of steps taken when the particles do independent lazy symmetric random walks on $\cC_n,$ where the laziness (probability of not jumping) is $\frac{\lambda}{1+\lambda}$ (exactly the probability that an instruction is a sleep instruction).
Now as an easy consequence of \eqref{azuma}, for any  $\e>0$,  the total number of particles in $\eta$ is in $(\mu-\e, \mu+\e)n$ with probability at least $1-e^{-cn}$ for some $c=c(\e,\mu)>0$. We can therefore condition on the number of particles being $m\in (\mu-\e, \mu+\e)n$.  Let $T^*$ denote the total number of steps taken by these particles until the end of phase one. Using Lemma \ref{stop12} with $r=c_0\log n$,we get that $\P(T^*> Cn\log^2 n)\leq e^{-cn}$. 
\end{proof}

\subsection{Phase two: Stabilizing from the sources}
We describe the second phase of our toppling scheme now. Recall that we start with the configuration $\eta^{(1)}$ that is supported on the $\mathbf{Source}$ vertices. Throughout the section we shall assume that $\eta^{(1)}$ satisfies the high probability event described in Lemma \ref{l:rw1}, where $\e$ will be chosen sufficiently small depending on $\mu$ and $\lambda$ later. Recall the intervals $I_{i}=[(i-1)c_0\log n, ic_0\log n]$ for $1\le i \le K$. Observe that the $i^{th}$ source $z_i$ is the midpoint of the interval $I_{i}$. 

As mentioned before, for each $1\le i\le K,$ we start stabilizing particles at $z_i$ sequentially, in any arbitrary manner of toppling, until one of the particles hit the boundary of $I_i$, in which case we term the process a failure. On the contrary, we denote by $\mathcal{S}_{i}$ the event that all the $\eta^{(1)}_{i}$ particles all fall asleep before hitting the boundary of $I_i$: call this event \textbf{Success at source $z_i$}. Clearly on the event that $\cS_{i}$ occurs for all $i$, the system stabilizes. The main step is to show that $\cS_i$ occurs with high enough probability so that one can take a union bound over all intervals $I_i$. Because of the underlying symmetry, we state the following result for a generic interval $[-\frac{r}{2}, \frac{r}{2}]$, where we assume $r$ is even to avoid rounding issues.

\begin{proposition}
\label{p:genericfix}
Consider ${\rm ARW},$ with sleep rate $\lambda,$ on $[-\frac{r}{2},\frac{r}{2}],$ started with $m$ particles at the origin. Let $\e>0$ be such that $\mu+2\e < \frac{\lambda}{1+\lambda}$, and let $m\leq (\mu+\e)r$. Let $\cS$ denote the event that all the particles fall asleep before any particle hits $\{-\frac{r}{2},\frac{r}{2}\}$. Then there exists $c>0$, such that for all $r$ sufficiently large we have 
$\P(\cS^c)\leq e^{-cr}.$
\end{proposition}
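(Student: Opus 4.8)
The plan is to exhibit an explicit toppling scheme on the interval $[-\frac{r}{2},\frac{r}{2}]$ that ignores certain sleep instructions and forces every particle to fall asleep at a carefully chosen \emph{trap} site before any particle can reach the boundary, and then to show this scheme succeeds with probability $1-e^{-cr}$; by Lemma \ref{l:sleepmon} (and the least-action principle) this dominates the true odometer of ${\rm ARW}$. Since $m$ particles start at the origin and $m\le(\mu+\e)r$ while $\mu+2\e<\frac{\lambda}{1+\lambda}$, there is slack of at least $\e r$ empty sites, so one can designate a sequence of trap sites spreading outward from the origin, one per particle, all lying strictly inside $[-\frac{r}{2},\frac{r}{2}]$. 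I would adapt the trap-setting procedure of \cite{RS12}, but in a \emph{two-sided} form: because the interval is finite on both ends (there is no infinite half-line to escape to), a trap must be set symmetrically so that particles wandering in either direction get caught, and the relevant hitting/escape estimates are for a random walk on a bounded interval rather than a half-line.

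The core of the argument is the following ``one particle at a time'' reduction. Process the particles one by one (legitimate by the Abelian property): take the topmost active particle at the origin, let it random-walk (honoring all jump instructions, ignoring sleep instructions) until it first reaches one of the two nearest unoccupied trap sites flanking the current cluster of settled particles; at that trap site, read instructions until a sleep instruction appears and \emph{honor it}, putting the particle to sleep there. The particle is then frozen and becomes a new boundary of the ``settled'' region. The scheme fails only if, during this excursion, the walker reaches $\{-\frac r2,\frac r2\}$ before reaching its target trap, or (a minor point that has to be handled) a sleeping particle gets woken — but in this scheme a walker only ever moves through sites between the origin and its trap, which are empty by construction, so no re-activation occurs, and the sole failure mode is a walker escaping to the boundary. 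Thus
\[
\P(\cS^c)\le \sum_{\text{particles}} \P(\text{that particle's excursion hits the boundary before its trap}).
\]
Since each particle released near position $\pm j c$ (for trap spacing $c=O(1)$) must travel order $r$ to reach the boundary but only $O(1)$ to reach the next trap, a Gambler's-ruin estimate on the interval bounds each summand by something like $e^{-c' r}$ — actually one needs a small but fixed exponential gain per particle and then the union bound over $\le m \le r$ particles still leaves $e^{-cr}$. I would also need the geometric-tail estimate of Lemma \ref{lem2}(i) to control the number of instructions read at each trap before a sleep instruction is found, but that only affects the \emph{running time} bound (feeding into the $O(n\log^2 n)$ count), not the success probability.

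The main obstacle I anticipate is designing the trap geometry so that (a) every particle has its own trap inside the interval — this is where the density condition $\mu+2\e<\frac{\lambda}{1+\lambda}$ must be used quantitatively, leaving enough room — and (b) the walk from the origin to the next available trap genuinely has only an exponentially small chance of reaching the boundary \emph{uniformly over which particle is being processed}, including the last few particles whose traps sit close to $\pm\frac r2$. For those last particles the ``distance to trap'' and ``distance to boundary'' are comparable, so the naive Gambler's-ruin bound degrades; the fix, following the spirit of \cite{RS12}, is to order the traps so they are filled from the outside in (or to interleave them) so that at every stage the active walker is released well in the interior with the boundary still $\Omega(r)$ away, and the already-settled particles near the ends act as absorbing walls. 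Making this ordering precise, and checking that ignoring sleep instructions never creates an illegal toppling, is the technical heart; once the geometry is fixed the probability bound is a routine union bound over $O(r)$ exponentially-small events.
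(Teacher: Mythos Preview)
Your high-level framework (process particles one at a time via the Abelian property, exhibit a two-sided trap scheme, invoke Lemma~\ref{l:sleepmon}) matches the paper's, but the trapping mechanism you describe does not work, and that mechanism is the entire content of the proof.

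Three concrete failures. (i) ``At that trap site, read instructions until a sleep instruction appears and honor it'' is not a legal move: only sleep instructions can be replaced by nulls (Lemma~\ref{l:sleepmon}); jump instructions must be honored, so the particle will step off the trap, possibly onto the boundary. (ii) ``A walker only moves through sites between the origin and its trap, which are empty by construction'' is false once the second trap is laid: if traps spread outward from $0$, later walkers must cross earlier sleepers and wake them; your outside-in variant does not help either, since in ARW a walker arriving at a sleeper wakes it rather than being absorbed. (iii) Gambler's ruin for \emph{simple} random walk gives probabilities polynomial in $r$, not $e^{-c'r}$; the union bound over $m\sim r$ particles would not close.

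The paper resolves all three with a single idea taken from \cite{RS12}. For the $i$th particle, run the exploration forward from $0$ (ignoring all sleeps) until it first hits one of two inward-moving barriers $a_{i-1}<0<b_{i-1}$; then scan \emph{backward} along that explored path and set $\mathbf{Trap}_i$ to be the first site whose penultimate used instruction was a sleep. In the actual dynamics the particle halts at $\mathbf{Trap}_i$ on exactly that already-seen sleep instruction; the instructions explored beyond $\mathbf{Trap}_i$ lie outside the new barriers $(a_i,b_i)$ and are never touched again, so successive explorations are independent and no re-activation occurs. The increment $|\mathbf{Trap}_i-q_{i-1}|$ is then dominated by an independent $\mathrm{Geom}(\tfrac{\lambda}{1+\lambda})$ variable (Lemma~\ref{l:gap}). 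This is exactly where the hypothesis $\beta:=m/r<\tfrac{\lambda}{1+\lambda}$ enters: roughly half the particles hit each barrier (proved via a coupling to an internal-erosion process, Lemmas~\ref{l:coupling}--\ref{l:erosion}), so each barrier moves inward by about $\tfrac12\beta r\cdot\tfrac{1+\lambda}{\lambda}<\tfrac r2$ in expectation, and concentration for sums of geometrics gives $\P(a_m<0<b_m)\ge 1-e^{-cr}$. The density condition is not about ``enough empty sites for traps''; it controls the mean barrier drift per particle.
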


Proposition \ref{p:genericfix} is the most technically complicated result that goes into the proof of Theorem \ref{sub}, and the proof will be spread over the next two subsections. Before delving into this proof, we want to complete the remaining steps in the argument proving Theorem \ref{sub}. First we need to prove the following easy lemma. 

\begin{lemma}
\label{l:nstep2}
Consider ${\rm ARW}$ with sleep rate $\lambda,$ started from $\eta^{(1)}$. Let $T^{(2)}=T^{(2)}(\mu,\lambda)$ denote the total number of steps taken by all the particles until stabilization. Then there exists $C_3>0,$ and $\theta>0,$ such that on the event $\cap \cS_{i}$, we have 
$$\P\left (T^{(2)}\geq C_3 n\log ^2n \right)\leq n^{-\theta}$$
for all $n$ sufficiently large. 
\end{lemma}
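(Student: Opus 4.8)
The strategy is to bound $T^{(2)}$ by comparing the Phase 2 dynamics with independent random walks on the intervals $I_i$, exactly as in the proof of Lemma \ref{l:nstep1}. On the event $\cap_i \cS_i$, all particles stabilize without ever leaving their respective intervals $I_i$ of length $c_0 \log n$, and in fact (by the Abelian Property) the total number of topplings in Phase 2 does not depend on the order of toppling. So I would topple each interval $I_i$ separately: inside $I_i$ we have at most $\eta^{(1)}_i \le (\mu+\e) c_0 \log n$ particles (using the event of Lemma \ref{l:rw1}), each of which, until it falls asleep, is performing a lazy simple symmetric random walk with laziness $\tfrac{\lambda}{1+\lambda}$ that is confined to $I_i$ because we are on $\cS_i$. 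The key point is that on $\cS_i$, a particle falls asleep no later than it would hit $\partial I_i$ in the \emph{sleep-ignored} dynamics, so the number of steps taken by each particle in $I_i$ is stochastically dominated by the number of steps a lazy random walk started in $I_i$ takes to hit $\partial I_i$; the latter has the geometric-type tail from the proof of Lemma \ref{stop12} (dominated by $r^2 G$ with $G \sim \mathrm{Geom}$ of mean $2$, $r = c_0\log n$).

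Concretely, I would first condition on the high-probability event from Lemma \ref{l:rw1}, so that $\sum_i \eta^{(1)}_i \le (\mu+\e) n$ and each $\eta^{(1)}_i \le (\mu+\e)c_0\log n$. Then the total number of Phase 2 steps is $T^{(2)} = \sum_i \sum_{j=1}^{\eta^{(1)}_i} T_{i,j}$, where $T_{i,j}$ is the number of steps taken by the $j$-th particle at source $z_i$ before it falls asleep (on $\cS_i$). Since each $T_{i,j}$ is stochastically dominated by $(c_0 \log n)^2 G_{i,j}$ with $G_{i,j}$ i.i.d.\ geometric of mean $2$, and the total number of particles is at most $(\mu+\e)n$, I can apply part (i) of Lemma \ref{lem2} with $M = (\mu+\e)n$ (the number of summands, which is linear in $n$) to get
\[
\P\left( \sum_{i,j} (c_0\log n)^2 G_{i,j} > C_3 n \log^2 n \right) \le 2 \exp(-c n)
\]
for suitable constants; combined with the failure probabilities of Lemma \ref{l:rw1} (at most $e^{-a c_0 \log n} = n^{-ac_0}$) and of the configuration-count concentration, this yields the claimed $n^{-\theta}$ bound, absorbing the various polynomially small error terms.

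The one technical point that needs care — and the main (mild) obstacle — is justifying the stochastic domination $T_{i,j} \preceq (c_0\log n)^2 G_{i,j}$ rigorously: I must argue that on the event $\cS_i$, toppling particle $j$ at $z_i$ until it sleeps uses no more instructions than running the same particle as a lazy walk (ignoring sleeps) until it exits $I_i$. This follows from Lemma \ref{l:sleepmon} applied at the level of the single interval $I_i$ (the sleep-ignored dynamics has larger odometer everywhere), together with the fact that the sleep-ignored single-particle dynamics is exactly a lazy SSRW stopped at $\partial I_i$; one should also note the $T_{i,j}$ across different $i$ are independent since they use disjoint stacks, and the domination is uniform over starting locations as in Lemma \ref{stop12}. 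Everything else is a routine union bound.
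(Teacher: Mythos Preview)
Your proposal is correct and follows essentially the same route as the paper's proof: the paper also argues that on $\cS_i$ each particle's step count is dominated by $C(c_0\log n)^2 G$ with $G$ geometric of mean $2$, and then invokes the concentration argument of Lemma~\ref{stop12}/Lemma~\ref{l:nstep1} verbatim. The paper's proof is extremely terse and simply asserts the per-particle domination ``arguing as in the proof of Lemma~\ref{stop12}''; your discussion of why this domination holds (via the trap-setting structure and Lemma~\ref{l:sleepmon}) and of the independence across intervals just fills in details the paper skips.
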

\begin{proof}
Observe that arguing as in the proof of Lemma \ref{stop12}, taking $r=c_0 \log n,$ on $\cS_i$, the number of steps taken by each particle started from $z_i$ is dominated by $Cc_0^2\log^2 n G$ where $G$ is a geometric random variable with mean 2 and $C$ depends on $\lambda$. The rest of the proof is identical to that of Lemma \ref{stop12} and its application in the proof of Lemma \ref{l:nstep1}. We skip the details. 
\end{proof}
We can now complete the proof of Theorem \ref{sub}.

\begin{proof}[Proof of Theorem \ref{sub}] 
Fix $\mu< \frac{\lambda}{1+\lambda}$ and recall our two phase stabilization procedure. Recall $T^{(1)}$ from Lemma \ref{l:nstep1} and $T^{(2)}$ from Lemma \ref{l:nstep2}, and the stack of instructions $\sI$ from \eqref{stack}. 
Note that our toppling scheme produces a stack of instructions $\sI^*$ obtained from $\sI$, where the sleep instructions which are ignored by our toppling scheme are replaced by $\frak{n}$ instructions (see the definitions before Lemma \ref{l:sleepmon}).
Moreover, given the stack $\sI^*,$ by the Abelian Property (Lemma \ref{abp}), on the event $\cap \cS_i,$ the total number of instructions needed to stabilize is $T^{(1)}+T^{(2)},$ since our toppling scheme uses exactly those many instructions from $\sI^*.$ Finally, using Lemma \ref{l:sleepmon}, the total number of instructions used to stabilize, for the stack  $\sI$, is upper bounded by $T^{(1)}+T^{(2)}.$
Thus it will suffice to show that  
\begin{equation}
\label{e:suffice}
\P(T^{(1)}+T^{(2)}> C_0n\log ^2 n)\leq n^{-b}.
\end{equation}
We first fix $\e>0$ so that the hypothesis of Proposition \ref{p:genericfix} is satisfied. Then fix $c_0, C_2, a$ so that the conclusions of Lemma \ref{l:rw1} (with the same $\e$) and Lemma \ref{l:nstep1} hold. 
Thus, the event
$$\cA=\left \{\sup_{1\le i\le K} |\eta^{(1)}_{i}-\mu c_0\log n| \le \e c_0 \log n\right\}\cap \left\{T^{(1)}(\mu,\lambda)< C_2 n\log^2 n\right \}$$ occurs with probability at least $1-n^{-\theta}$ for some $\theta>0.$

On $\cA,$ by definition, for each $1\le i\le K,$ $\eta^{(1)}_i$ satisfies the hypothesis of Proposition \ref{p:genericfix}, and hence applying the latter with $r=c_0\log n$ and a union bound we get
$$\P(\cap \cS_{i})\geq 1-ne^{-hc_0\log n}\geq 1-n^{-\theta}$$
for some $h>0$, and by choosing $c_0$ sufficiently large the final inequality holds for some $\theta>0$. We can now infer \eqref{e:suffice} for all sufficiently large $n,$  from Lemma \ref{l:nstep2}, choosing $C_0$ sufficiently large compared to $C_2$ and $C_3$ and choosing $b$ sufficiently small. 
\end{proof}

\subsection{Setting the Traps}

It remains to prove Proposition \ref{p:genericfix}. For this proof we shall work with the Diaconis-Fulton representation of ARW on $[-\frac{r}{2},\frac{r}{2}]$ using the stack of instructions $\mathscr{I}$ as explained at the end of Section \ref{fsetup}. 

For the remainder of this section, we shall be in the set-up of Proposition \ref{p:genericfix}. Also without loss of generality we shall assume that the total number of particles at the origin is $m=\mu r$, and we run ARW with sleep rate $\lambda$ where $\mu<\frac{\lambda}{1+\lambda}$. Using Abelian Property (Lemma \ref{abp}), our goal will be to provide a toppling procedure (with some possibly ignored sleep instructions) that, when it succeeds, will lead to a stable configuration before any of the particles reach $\{-\frac{r}{2},\frac{r}{2}\}$. Our job will be finished once we show that the algorithm succeeds with high probability. Next we describe in detail the steps of our algorithm, which employs a variant of the algorithm in \cite[Section 5]{RS12}, along with a more complicated trap setting procedure in the finite setting. We elaborate on the differences from \cite{RS12} and their necessity later, but first we describe the procedure.
\subsubsection{Exploration, and locating the traps} Recall that the number of particles at the origin is $m.$ Let us enumerate the particles $y_1,y_2,\ldots, y_{m}$. The algorithm consists of applying a settling procedure to each particle. This procedure explores $\mathscr{I}$ until it identifies a suitable trap for the particle. The exploration follows the path that the particle would perform if we always toppled the site it occupies, and stops when the trap has been chosen. In the absence of a suitable trap,
we declare the algorithm to have failed. We set two initial barriers $a_0 = -\frac{r}{2}, b_0 =\frac{r}{2}$. We will recursively define barrier processes $a_0<a_1< \ldots$ and $b_0>b_1>\ldots$ that are functions of $\mathscr{I}$. Having defined $a_{i}$ and $b_{i}$, we define $a_{i+1}$ and $b_{i+1}$ as follows:
\begin{figure}[h]
\centering
\includegraphics[scale=.55]{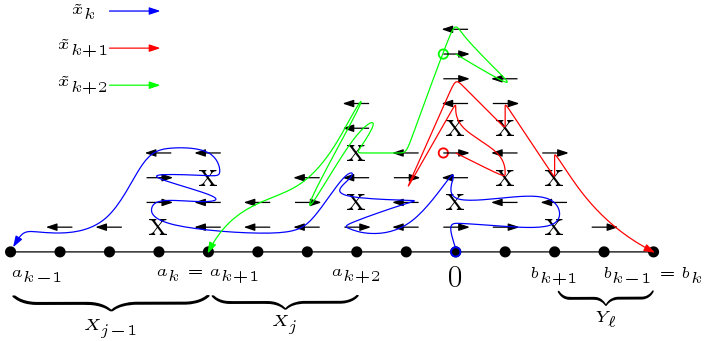}
\caption{This figure is similar to the one appearing in \cite{RS12}. The blue, red and green paths are the exploration trajectories for three consecutive particles labelled $\tilde x_k,\tilde x_{k+1},\tilde x_{k+2}$ respectively, starting at the origin. The $\leftarrow,\rightarrow$ denote jump instructions whereas X denotes a sleep instruction. The first particle is stopped on hitting the barrier $a_{k-1}$ and hence it advances to $a_k$ which is the closest site where the second last instruction was a sleep instruction ignored. The particle now falls asleep at $a_k$ instead of exploring the path beyond $a_k.$ As shown in Lemma \ref{l:gap}, $a_k-a_{k-1}$ is dominated by a Geometric variable of mean $\frac{1+\lambda}{\lambda}.$ However the barrier $b_{k-1}$ stays as it is and is renamed $b_k.$ The second particle hits barrier $b_{k}$ which now advances to $b_{k+1}$ where the particle uses the previously ignored sleep instruction to fall asleep, whereas $a_k$ is renamed $a_{k+1}$. Thus the trap setting scheme proceeds to find traps for each individual particle to fall asleep.}
\label{fig0}
\end{figure}

 Topple the particle $y_{i+1}$ using the previously unexplored instructions in $\mathscr{I}$ until it hits either $a_{i}$ or $b_{i}$: that is, we always topple the site where this particle is currently located. At this stage we ignore all the sleep instructions. Eventually the particle hits either $a_{i}$ or $b_{i}$, call the site hit $q_i$. Let us suppose  $q_i=a_{i}$ and the exploration process hits $q_i$ at step $\tau_i$. We set $b_{i+1}=b_{i}$ and explore the accessed sites backwards from $a_i$, until we reach zero. If we reach a site $v$ where the second to last instruction accessed was a sleep instruction (notice that the last instruction must have been a step towards $a_{i}$) that was ignored, then we set $a_{i+1}=v$, and call $a_{i+1}=\mathbf{Trap}_{i+1}$. If no such site exists we declare the procedure to have failed. Observe that provided we can successfully set the barriers then they are moving towards the origin from both sides. We declare the trap setting scheme a success if $a_{m}<0<b_{m}$. Note that this is a sufficient condition for us to be able to set barriers for all the $m$ particles.

In \cite{RS12}, the argument is based on considering the infinite half line and hence in that setting, it suffices to only consider a single barrier process $a_0 <a_1< \ldots$ In contrast, in our setting, we want the particles to not exit the interval $[-\frac{r}{2},\frac{r}{2}]$,  and hence do not want the situation where a certain random walk hits the barrier process $a_0 <a_1< \ldots,$, after a long excursion  outside the interval of our interest. This creates the necessity to have another barrier process, $\ldots<b_1<b_0,$ which the particle would hit instead on such a journey, preventing its exit from the interval.

\subsubsection{Running the dynamics}
Let us suppose that the realization of $\mathscr{I}$ is such that the trap setting procedure is a success. On this event, let us give a toppling scheme which will utilize the traps to stabilize all the particles before they hit $a_0$ or $b_0$. We topple the particles sequentially. Assuming the particles up to $y_{i-1}$ have been settled, we start the toppling of the particle $y_i$, ignoring all the sleep instructions until the particle hits $\mathbf{Trap}_{i}$ for the last time before hitting $a_{i-1}$ or $b_{i-1}$. Observe that because the instructions used were never used in the in exploration process of the previous particles, the path of this particle is the same as the exploration path, up until it hits $\mathbf{Trap}_{i}$ for the last time. We let the sleep instruction that was the second to last one accessed at $\mathbf{Trap}_{i}$ be executed and this settles the particle $y_{i}$ at $\mathbf{Trap}_{i}$. The key thing to notice here is that all the subsequent instructions accessed by the exploration process (but not in the actual dynamics) are located outside $(a_i,b_i)$, hence the future exploration processes will never try to access them by definition. This implies that the procedure can be continued with all the particles settled at their respective traps if the trap setting procedure succeeds, and additionally the consecutive exploration paths are independent of each other. This fact will be crucial for us when we try to estimate the growth of the barrier processes. 

We summarize the upshot of the toppling procedure and the discussion above in the following lemma. 

\begin{lemma}
\label{l:success}
In the set-up of Proposition \ref{p:genericfix}, suppose the trap setting procedure described above succeeds. Then $\cS$ occurs.
\end{lemma}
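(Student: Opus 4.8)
The plan is to make precise the toppling scheme described in the ``Running the dynamics'' paragraph and to verify that, whenever the trap-setting procedure succeeds, it is a legal sequence of topplings (for the stack obtained from $\mathscr{I}$ by suppressing the relevant sleep instructions) which settles all $m$ particles strictly inside $(-\tfrac r2,\tfrac r2)$ without any of them ever visiting $\{-\tfrac r2,\tfrac r2\}$; the Abelian Property together with Lemma \ref{l:sleepmon} then upgrades this to the event $\cS$ for the genuine dynamics. The starting observation is the geometry of ``success'': since $a_0<a_1<\dots$ and $b_0>b_1>\dots$, the condition $a_m<0<b_m$ forces $a_i<0<b_i$ for every $i\le m$, so $\mathbf{Trap}_1,\dots,\mathbf{Trap}_m$ are pairwise distinct and all strictly interior to $(-\tfrac r2,\tfrac r2)$, the ``left'' traps negative and the ``right'' traps positive; moreover each trap set before $\mathbf{Trap}_i$ is some $a_j\le a_{i-1}$ or some $b_j\ge b_{i-1}$, and hence lies outside the open interval $(a_{i-1},b_{i-1})$.

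Next I would process $y_1,\dots,y_m$ in order, inducting on $i$ with the hypothesis that at the start of step $i$ the occupied sites are exactly $\mathbf{Trap}_1,\dots,\mathbf{Trap}_{i-1}$, each holding one sleeping particle. Toppling $y_i$ along its exploration trajectory with all sleep instructions ignored, until its last visit to $\mathbf{Trap}_i$ before hitting $a_{i-1}$ or $b_{i-1}$, and then executing the previously ignored sleep instruction there, keeps $y_i$ strictly inside $(a_{i-1},b_{i-1})$, because this running trajectory is a proper prefix of an exploration trajectory which by construction first leaves $(a_{i-1},b_{i-1})$ only on its final step. By the previous paragraph that region contains none of the already-asleep particles, so every toppling applied to $y_i$ acts on a site carrying exactly one active particle and is therefore legal; the terminal sleep instruction then settles $y_i$ at $\mathbf{Trap}_i$, which holds only $y_i$ at that instant, re-establishing the hypothesis for $i+1$. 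After $y_m$ the configuration has a single sleeping particle at each of the $m$ distinct traps and is empty elsewhere, hence is stable, and no particle, and no toppled site, ever left $(-\tfrac r2,\tfrac r2)$.

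To conclude, this scheme is a legal stabilizing sequence for the stack $\mathscr{I}^*$ obtained from $\mathscr{I}$ by replacing the ignored sleep instructions with $\frak{n}$; its odometer vanishes at $\pm\tfrac r2$, and, as no particle is ever pushed onto the boundary, among the instructions it uses at the boundary-adjacent sites $\tfrac r2-1$ and $-\tfrac r2+1$ none is the jump onto $\tfrac r2$ or onto $-\tfrac r2$. By Lemma \ref{l:sleepmon} the odometer of the genuine dynamics driven by $\mathscr{I}$ is dominated pointwise by that of this scheme, so it too vanishes at $\pm\tfrac r2$ and the sites $\tfrac r2-1$, $-\tfrac r2+1$ are likewise toppled only with instructions that do not jump onto the boundary; hence in the genuine dynamics no particle ever reaches $\{-\tfrac r2,\tfrac r2\}$, and, the process being an absorbing chain with fewer particles than sites, it stabilizes with every particle asleep, which is precisely the event $\cS$.

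I expect the delicate point to be the claim used in the second paragraph that, when $y_i$ is toppled in the scheme, it exactly retraces its trap-setting exploration trajectory — so that ``the sleep instruction that was second-to-last accessed at $\mathbf{Trap}_i$'' really is the instruction encountered there, in the right position. This rests on the facts that $y_i$'s exploration consumed only instructions untouched by the explorations of $y_1,\dots,y_{i-1}$, and that the ``tails'' of those earlier exploration paths — the portions beyond their traps, which the scheme never uses once the particles are put to sleep — lie in intervals $[a_{j-1},a_j]$ or $[b_j,b_{j-1}]$ that are disjoint from $(a_{i-1},b_{i-1})$ and hence are never entered by $y_i$; consequently, at every site visited by $y_i$ the first unused instruction is precisely the one used there by $y_i$'s exploration, so the scheme is well defined and the induction of the second paragraph goes through.
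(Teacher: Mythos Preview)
Your argument is correct and follows precisely the approach the paper intends: construct a legal stabilizing sequence for the modified stack $\mathscr{I}^*$ whose odometer vanishes at $\pm\tfrac r2$, then invoke Lemma~\ref{l:sleepmon} (together with the Abelian Property implicit in the well-definedness of the odometer) to conclude that the genuine odometer also vanishes there, hence $\cS$ holds. The paper's own proof is the single sentence ``a straightforward consequence of Lemmas~\ref{l:sleepmon} and~\ref{abp}'', so you have essentially written out what the paper leaves to the reader; in particular your fourth paragraph makes explicit the key bookkeeping observation the paper flags just before Lemma~\ref{l:success}, namely that the ``tails'' of earlier explorations lie outside $(a_{i-1},b_{i-1})$ and therefore do not interfere with $y_i$'s stack pointers.
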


\begin{proof}
The proof is a straightforward consequence of Lemmas \ref{l:sleepmon} and \ref{abp}.
\end{proof}

Using Lemma \ref{l:success}, the proof of Proposition \ref{p:genericfix} will now be complete, if we show that the probability that the trap setting procedure fails is exponentially small in $r$. 
As mentioned before, our toppling scheme succeeds if $a_m< 0< b_m$ where $m$ is the total number of particles initially at $0.$
The proof of Proposition \ref{p:genericfix} follows from the next lemma. 

\begin{lemma}
\label{l:trap}
Let $m=\beta r$ be the total number of particles at the origin, and consider the trap setting procedure described above. For $\beta< \frac{\lambda}{1+\lambda}$, there exists $c=c(\beta,\lambda)>0$ such that for all sufficiently large $r$ we have 
$$\P(a_{m}<0<b_{m})\geq 1-e^{-cr}.$$
\end{lemma}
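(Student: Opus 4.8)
The plan is to show that the two barrier processes $a_0<a_1<\cdots$ and $b_0>b_1>\cdots$ move towards the origin slowly enough that after $m=\beta r$ steps they have not yet met, i.e.\ $a_m<0<b_m$. Since the two processes are symmetric (each increment of $a$ happens when a particle hits the left barrier, each decrement of $b$ when it hits the right barrier, and by the $\leftrightarrow$ symmetry of the law \eqref{law1} these are equidistributed), and since, as explained in the ``Running the dynamics'' paragraph, the consecutive exploration paths are \emph{independent}, it suffices to control the total displacement of a single side. Concretely, let $G_i := a_i - a_{i-1}$ when the $i$-th particle hits the left barrier and $G_i:=0$ otherwise, and similarly $G_i':=b_{i-1}-b_i$ for the right side; then $a_m = -\frac r2 + \sum_{i=1}^m G_i$ and $b_m = \frac r2 - \sum_{i=1}^m G_i'$, so the event $\{a_m<0<b_m\}$ follows once both $\sum_{i=1}^m G_i < \frac r2$ and $\sum_{i=1}^m G_i' < \frac r2$.

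The key input is a tail bound on the increments, which is exactly what Lemma~\ref{l:gap} (the gap lemma referenced in Figure~\ref{fig0}) provides: $a_k-a_{k-1}$ is stochastically dominated by a geometric random variable of mean $\frac{1+\lambda}{\lambda}$. The heuristic is that, exploring backwards from the hit barrier, we look for the nearest site at which an ignored sleep instruction was the second-to-last instruction accessed; independence across steps of $\mathscr I$ makes the distance to such a site geometric with success probability $\frac{\lambda}{\lambda+1}$. So $\sum_{i=1}^m G_i$ is dominated by a sum of $m=\beta r$ i.i.d.\ geometrics of mean $\frac{1+\lambda}{\lambda}$, hence has mean $\beta r \cdot \frac{1+\lambda}{\lambda}$. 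The hypothesis $\beta < \frac{\lambda}{1+\lambda}$ is precisely what makes this mean strictly less than $\frac r2$ — indeed $\beta r \frac{1+\lambda}{\lambda} < \frac r2 \cdot \frac{\lambda/(1+\lambda)}{\lambda/(1+\lambda)}$... more carefully, $\beta r\frac{1+\lambda}{\lambda} = \gamma r$ with $\gamma = \beta\frac{1+\lambda}{\lambda}<1$, so this mean is $\gamma r$, a constant factor below $r$, and certainly below $\frac r2$ after we absorb the factor. (If one wants the sharper constant $\frac r2$, split the $m$ particles as in the algorithm so that only those hitting the left barrier contribute to $\sum G_i$; heuristically about half do, giving mean $\approx \frac\gamma2 r<\frac r2$. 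Either way there is a constant gap.) Having a constant multiplicative gap between the mean and the threshold, a standard large-deviation estimate for sums of i.i.d.\ geometric random variables — precisely Lemma~\ref{lem2}(i) with $M=m=\beta r$, $p=\frac{\lambda}{1+\lambda}$, and $\delta$ a fixed constant chosen so that $(1+\delta)\gamma r < \frac r2$ — yields
\[
\P\Big(\sum_{i=1}^m G_i \ge \tfrac r2\Big) \le \P\Big(\sum_{i=1}^m G_i \ge (1+\delta)\,\E\!\sum_{i=1}^m G_i\Big) \le 2\exp\!\big(-(\delta-\log(1+\delta))\,\gamma r\big) = e^{-c_1 r}
\]
for some $c_1=c_1(\beta,\lambda)>0$, using that the true increments are dominated by the geometrics.

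The remaining step is a union bound: $\P(a_m\ge 0 \text{ or } b_m\le 0) \le \P(\sum G_i\ge \frac r2) + \P(\sum G_i'\ge \frac r2) \le 2e^{-c_1 r} \le e^{-cr}$ for a slightly smaller $c>0$ and all large $r$, which is the claim. One must also absorb the event ``the procedure fails because no suitable trap site exists before reaching zero'' into the same estimate: this can only happen if the backward exploration from $a_{i-1}$ reaches $0$ without finding an ignored sleep instruction in second-to-last position, but on the complement of $\{\sum G_j \ge \frac r2\}$ (intersected over the relevant prefix) the barriers are still strictly inside $(0,b_{i-1})$ wait—actually this failure mode is exactly the event that a single geometric increment would overshoot the distance to the origin, so it is already contained in $\{\sum_{j\le i}G_j \ge \text{dist to }0\} \subseteq \{\sum_{j\le m} G_j \ge \frac r2\}$ once we note barriers only move inward; hence no extra term is needed.

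\textbf{Main obstacle.} The substantive point — and the one requiring care rather than routine bookkeeping — is establishing the stochastic domination of the increments $a_k-a_{k-1}$ by i.i.d.\ geometric variables (Lemma~\ref{l:gap}), \emph{with genuine independence across $k$}. This hinges on the observation flagged in the ``Running the dynamics'' paragraph: the instructions accessed by the backward exploration past a newly-set trap lie outside $(a_i,b_i)$ and are never revisited, so each new particle's exploration uses fresh i.i.d.\ entries of $\mathscr I$. Once that independence is nailed down, the geometric tail at each step is immediate from the i.i.d.\ structure of the stack, and the rest is the concentration estimate above. A secondary, purely cosmetic subtlety is bookkeeping the exact constant in the threshold (whether one needs $\sum G_i < \frac r2$ or can afford $\sum G_i < r - (\text{something})$); since the hypothesis gives a constant multiplicative gap $\gamma<1$ this never becomes tight, so any convenient constant works.
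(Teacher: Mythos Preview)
Your argument has a genuine gap at what you call the ``purely cosmetic subtlety'' of the threshold $\frac r2$ versus $r$. Write $\gamma:=\beta\frac{1+\lambda}{\lambda}$. The hypothesis $\beta<\frac{\lambda}{1+\lambda}$ gives only $\gamma<1$, \emph{not} $\gamma<\tfrac12$. But we need $a_m<0$, i.e.\ the total left displacement $\sum_i G_i<\frac r2$, and your dominating sum of $m$ i.i.d.\ geometrics has mean $\gamma r$, which for $\beta$ close to $\frac{\lambda}{1+\lambda}$ is close to $r$ and well above $\frac r2$. In that regime the concentration bound from Lemma~\ref{lem2}(i) goes the wrong way: the dominating sum concentrates near $\gamma r>\frac r2$, and you cannot conclude $\P(\sum G_i\ge \frac r2)$ is small. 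So the proof as written works only in the restricted range $\beta<\frac{\lambda}{2(1+\lambda)}$.

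The missing ingredient is exactly the ``heuristically about half do'' claim you wave at: one must show that at most $(\tfrac12+\delta)m$ of the particles hit the left barrier, so that the left displacement is dominated by a sum of only $(\tfrac12+\delta)m$ geometrics, with mean $(\tfrac12+\delta)\gamma r<\frac r2$ for $\delta$ small. This is Lemma~\ref{l:half} in the paper, and it is \emph{not} routine. The indicators $U_i=\mathbf 1\{i\text{-th particle hits the left barrier}\}$ are not i.i.d.\ $\mathrm{Ber}(\tfrac12)$: by Gambler's ruin, $\P(U_i=1\mid\text{past})=\frac{b_{i-1}}{b_{i-1}-a_{i-1}}$, which depends on the current barrier positions, and the feedback is \emph{positive} (the closer barrier is more likely to be hit, pushing it still closer to $0$). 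The paper controls this via a coupling to an internal erosion process (Lemmas~\ref{l:coupling} and~\ref{l:erosion}), embedding the sequence of hits into two competing inhomogeneous renewal processes and comparing their clocks; this occupies an entire subsection. Your proposal does not supply any argument for this step, and the claim that ``any convenient constant works'' is false.
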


Let us first provide a brief outline of our argument. Observe that at each stage $i$, exactly one of the barriers advances towards the origin. The probability of this being the left one or the right one is equal by symmetry at step 1 and hence equal to $\frac{1}{2}.$ The proof now involves showing that it remains close to $\frac{1}{2}$ throughout. Also we show that the distance a barrier moves at each step has mean $\frac{1+\lambda}{\lambda}$. So the total distance covered by the barriers after $\beta r$ many moves is approximately $\frac{1}{2}\beta r \frac{1+\lambda}{\lambda},$ which is smaller than $\frac{r}{2}$  because of the assumption on $\beta$ and $\lambda$. Since the initial location of the barriers were at $-\frac{r}{2}$ and $\frac{r}{2},$ the above implies that none of the barriers cross the origin. 

We now make the above argument formal. The first lemma we need is the following. A similar observation was already present in \cite{RS12}.

\begin{lemma} 
\label{l:gap}
At the $i^{th}$ stage, the distance of $\mathbf{Trap}_i$ from the barrier $q_{i-1}$ hit by the $i^{th}$ exploration process is dominated by a geometric random variable with mean $1+\frac{1}{\lambda}$ independent of everything else. 
\end{lemma}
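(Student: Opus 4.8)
The plan is to analyze the backward exploration from the barrier $q_{i-1}$ that the $i^{\text{th}}$ particle just hit (say $q_{i-1} = a_{i-1}$, the case $b_{i-1}$ being symmetric), and read off exactly which randomness determines the location of $\mathbf{Trap}_i$. Recall that once the exploration path of particle $y_i$ hits $a_{i-1}$, we walk backward along the accessed sites, and $\mathbf{Trap}_i$ is set to be the first site $v$ (moving from $a_{i-1}$ toward the origin) at which the second-to-last accessed instruction was an (ignored) sleep instruction. The key structural fact, established in the ``Running the dynamics'' discussion and in Lemma \ref{l:success}, is that the instructions consumed by this exploration are fresh — never touched by the earlier $i-1$ particles — so the relevant instruction stacks are i.i.d.\ with the law \eqref{law1}, independent of everything that happened before stage $i$. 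This gives the ``independent of everything else'' clause.

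First I would set up the backward scan precisely. Write $a_{i-1} = q$ and consider the sites $q, q-1, q-2, \ldots$ heading toward the origin (since the particle hit the \emph{left} barrier, the last step into $q$ was a left-step, so the path locally came from $q+1$; the relevant sites whose stacks we re-examine are those just inside the barrier). At each site $v \le q$ that was visited, look at the last two instructions that the exploration consumed there. By construction the \emph{last} consumed instruction at $v$ must have been a step toward $a_{i-1}$ (this is why the scan stops there — it is the condition stated in the trap-setting description). The question is whether the \emph{second-to-last} consumed instruction at $v$ was a sleep instruction. I would argue that, conditionally on the first site inspected not being a trap, the event that the next site inspected is a trap is, to leading order, an independent event of probability $\tfrac{\lambda}{\lambda+1}$ — because the second-to-last instruction at a freshly examined site is a fresh draw from \eqref{law1}, which is a sleep instruction with probability exactly $\tfrac{\lambda}{\lambda+1}$. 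Hence the index of the first success in this scan is geometric with parameter $\tfrac{\lambda}{\lambda+1}$, i.e.\ mean $\tfrac{\lambda+1}{\lambda} = 1 + \tfrac1\lambda$, which is exactly the claimed bound (stated as stochastic domination, to absorb boundary/edge effects such as the scan being cut off upon reaching zero).

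The main obstacle — and the reason the statement is phrased as \emph{domination} rather than an exact distributional identity — is the conditioning subtlety: we only examine the second-to-last instruction at a site $v$ after conditioning on the exploration path of $y_i$ having actually visited $v$ (and returned from it toward the barrier) without having been trapped earlier. One must check that this conditioning does not bias the second-to-last instruction at $v$ away from being a sleep instruction, or more precisely that it can only make a sleep instruction \emph{less} likely in a way that is absorbed by stochastic domination. The clean way to handle this is to reveal the stacks in the order the exploration uses them: the ``trapping'' coin at site $v$ (is the penultimate instruction a sleep?) is revealed only when the backward scan reaches $v$, at which point, by the freshness of the stacks and the fact that the forward exploration at $v$ only revealed instructions strictly after the penultimate one (it consumed the penultimate one while passing through $v$, but whether that particular draw was a sleep instruction is exactly the bit we are now reading), it is an unrevealed fair draw; the conditioning event (path structure, earlier non-traps) is measurable with respect to other coordinates of $\sI$. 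Making this measurability bookkeeping airtight — essentially a filtration/optional-stopping argument on the instruction array, analogous to the one in \cite{RS12} — is the one place the argument needs care; everything else is the elementary observation that first-success counts of i.i.d.\ $\tfrac{\lambda}{\lambda+1}$-coins are geometric with mean $1+\tfrac1\lambda$.
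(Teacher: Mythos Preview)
Your proposal is essentially the same approach as the paper's, and the overall structure is correct: scan backward from the barrier, at each site ask whether the penultimate accessed instruction was a sleep, and observe that this is a success with probability $\tfrac{\lambda}{1+\lambda}$ independently across sites, so the first success is dominated by a $\mbox{Geom}(\tfrac{\lambda}{1+\lambda})$ variable (with domination rather than equality because the scan may run out of sites before reaching the origin). You also correctly identify that freshness of the stacks relative to previous stages gives the ``independent of everything else'' clause.

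Where your write-up is murkier than the paper is the conditioning step. You say the penultimate instruction at $v$ is ``an unrevealed fair draw'' because the forward exploration ``only revealed instructions strictly after the penultimate one''; this is not right as stated --- the forward exploration \emph{does} consume (and hence reveal) the penultimate instruction. The paper sidesteps this entirely with a cleaner decomposition: condition on the exploration's \emph{jump} trajectory (the sequence of left/right moves, ignoring sleeps). Given that, the number of sleep instructions inserted between consecutive jumps at any site is i.i.d.\ $\mbox{Geom}(\tfrac{1}{1+\lambda})-1$, so the event ``the instruction immediately preceding the last jump at $v$ was a sleep'' has probability exactly $\tfrac{\lambda}{1+\lambda}$, independently across sites and independently of the jump trajectory. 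This makes the measurability bookkeeping you flag as ``the one place the argument needs care'' a one-line observation rather than a filtration/optional-stopping argument. (Also, a minor slip: if $q=a_{i-1}<0$ you should scan $q+1,q+2,\ldots$ toward the origin, not $q-1,q-2,\ldots$.)
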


\begin{proof} Without loss of generality we assume that $q_{i-1}=a_{i-1}.$ Recall from \eqref{law1} that each instruction $\xi_{(x,j)}$ in $\mathscr{I}$ is a sleep instruction with probability $\frac{\lambda}{1+\lambda}$  and jump instruction otherwise, independent of everything else.  Thus, conditioning on the $i^{th}$ exploration path,  the number of sleep instructions ignored at any site $x\in \cC_n$   between successive jumps at $x$  are i.i.d. random variables distributed as $\rm{Geom}(\frac{1}{1+\lambda}) -1$ (we adopt the standard notation of denote a Geometric random variable with mean $p^{-1}$ by $\mbox{Geom}(p)$). In particular the number of sleep instructions between successive jumps is zero with probability $\frac{1}{1+\lambda}.$ Thus at any site, the probability that there was a  sleep instruction ignored before the last jump instruction is $\frac{\lambda}{1+\lambda}.$  Thus $\mathbf{Trap}_i-a_{i-1}$ is dominated by ${\rm{Geom}}(\frac{1+\lambda}{\lambda})$ variable, independent of everything else. Note that this is not a distributional identity, as $\mathbf{Trap}_i-a_{i-1}$ is bounded above by $r-a_{i-1}.$
\end{proof}

The next step is to show that roughly half of the particles hit the barriers on either side. For $i=1,2,\ldots, m$, let $U_{i}$ denote the indicator that the $i$-th particle exploration process hits the left barrier first, i.e., $q_{i-1}=a_{i}$. Also let $V_{i}=1-U_{i}$. We have the following lemma. 

\begin{lemma}
\label{l:half}
In the above set-up, for each $\delta>0$, there exists $c=c(\delta)>0$ such that 
$$\P\left(\sum_{i=1}^{m} U_{i}\geq \Big(\frac{1}{2}+\delta\Big)m\right)\leq e^{-cr};\quad \P\left(\sum_{i=1}^{m} V_{i}\geq \Big(\frac{1}{2}+\delta\Big)m\right)\leq e^{-cr}.$$ 
\end{lemma}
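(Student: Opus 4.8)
The plan is to make rigorous the heuristic sketched before the statement: each barrier advances toward the origin by a $\mathrm{Geom}(\tfrac{\lambda}{1+\lambda})$ amount at each step, and roughly half the steps move each barrier. Let $\cF_i$ denote the information revealed after the first $i$ particles have been settled, and let $D^-_i := -a_i$ and $D^+_i := b_i$ be the (positive) distances of the two barriers from the origin, so $D^-_0 = D^+_0 = r/2$. Since, conditionally on $\cF_i$, the $(i{+}1)$-st exploration is a fresh symmetric walk from the origin run until it hits $a_i$ or $b_i$, Gambler's ruin gives
\[
\P(U_{i+1}=1 \mid \cF_i) = \frac{D^+_i}{D^-_i + D^+_i},
\]
and by Lemma~\ref{l:gap} the amount $G_{i+1}$ by which the hit barrier then advances is, independently of this choice, stochastically dominated by (and, as long as no barrier is within $O(\log^2 r)$ of the origin, equal in law to) a $\mathrm{Geom}(\tfrac{\lambda}{1+\lambda})$ variable, with mean $g := \tfrac{1+\lambda}{\lambda}$. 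The difficulty --- and what I see as the main obstacle --- is that this is a \emph{reinforcing} dynamics: once the left barrier has advanced, $D^-_i$ is smaller, so the next particle is \emph{more} likely to advance it again; hence $\sum_i U_i$ is not a sum of near-independent indicators and no direct Chernoff bound applies.

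\textbf{Compensating the reinforcement.} To tame this I would work with $X_i := D^-_i - D^+_i$ and $\Sigma_i := \sum_{j\le i} G_j$, so $D^-_i + D^+_i = r - \Sigma_i$ and, up to the truncation correction, $\E[X_{i+1}\mid \cF_i] = X_i\bigl(1 + \tfrac{g}{r-\Sigma_i}\bigr)$. First fix a good event $\cG$ of probability $1 - e^{-cr}$ on which: (i) $\Sigma_m$ lies within $o(r)$ of its mean $\beta g r$ --- in particular $\Sigma_m \le (1-c_1)r$ for some constant $c_1=c_1(\beta,\lambda)>0$, by Lemma~\ref{lem2}(i) and $\beta g<1$; and (ii) every partial sum of the left- (resp.\ right-) advancements is within $\delta_1 k + \log^2 r$ of $g k$, again by Lemma~\ref{lem2}(i) and a union bound over the $\le m$ prefixes. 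On $\cG$ one has $r-\Sigma_i \ge c_1 r$ for all $i\le m$, so the $\cF_i$-measurable compensator $\Phi_i := \prod_{j=0}^{i-1}\bigl(1+\tfrac{g}{r-\Sigma_j}\bigr)^{-1}$ satisfies $\Phi_* \le \Phi_i \le 1$ with $\Phi_* := e^{-2g\beta/c_1}>0$ a constant, and $M_i := X_i\Phi_i$ is a martingale with $M_0=0$ whose increments $M_{i+1}-M_i = \Phi_{i+1}(X_{i+1}-X_i) + X_i(\Phi_{i+1}-\Phi_i)$ are bounded by $G_{i+1} + g/c_1$ in absolute value --- hence sub-exponential with a parameter independent of $r$, and with uniformly bounded conditional variance. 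To avoid circularity with the truncation one runs this up to the stopping time at which a barrier first comes within $r^{1/2}$ of the origin, which will turn out to exceed $m$ on $\cG$.

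\textbf{The concentration bound.} A Bernstein-type maximal inequality for martingales with sub-exponential increments (this is where one genuinely needs more than Azuma, since merely bounded increments would cost a $\mathrm{polylog}$ factor in the exponent and not reach the rate $e^{-cr}$) gives $\P\bigl(\max_{i\le m}|M_i|\ge \e r\bigr)\le 2e^{-c'r}$, hence $\max_{i\le m}|X_i|\le \e r/\Phi_*$ with probability $\ge 1-e^{-cr}$. Taking $\e$ small shows neither $D^-$ nor $D^+$ comes within $\tfrac{c_1}{4}r$ of the origin before step $m$ --- so the stopping time above does exceed $m$, the truncation is never triggered, and incidentally Lemma~\ref{l:trap} drops out.

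\textbf{Converting to the count bound.} Writing $A^-_m$ for the total left-advancement, $D^-_m = \tfrac12\bigl((r-\Sigma_m)+X_m\bigr) = \tfrac r2 - A^-_m$, so $A^-_m = \tfrac12(\Sigma_m - X_m)$; on $\cG$ this is at least $(g-\delta_1)\sum_i U_i - \log^2 r$, whence
\[
\sum_{i=1}^m U_i \ \le\ \frac{\tfrac12(\Sigma_m+|X_m|)+\log^2 r}{g-\delta_1}\ \le\ \frac{\tfrac12(1+o(1))\beta g r + \e r/\Phi_* + \log^2 r}{g-\delta_1},
\]
and choosing $\delta_1$ and $\e$ small enough (and $r$ large) the right-hand side is at most $(\tfrac12+\delta)\beta r = (\tfrac12+\delta)m$. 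The identical computation at the other barrier, using $A^+_m = \tfrac12(\Sigma_m+X_m)$, gives $\sum_i V_i \le (\tfrac12+\delta)m$. The only real obstacle throughout is the reinforcement; everything else is concentration of sums of geometric variables (Lemma~\ref{lem2}) together with the compensated-martingale bookkeeping above, plus the minor stopping-time device that makes the ``no truncation'' caveat legitimate.
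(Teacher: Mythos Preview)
Your approach is genuinely different from the paper's and essentially correct, but there is one gap to flag.

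\textbf{Comparison with the paper.} The paper handles the reinforcement by a coupling to an ``internal erosion'' process: since $\P(U_{i+1}=1\mid\cF_i)=b_i/(b_i-a_i)$ equals the probability that an exponential of mean $-a_i$ rings before an independent one of mean $b_i$, one can realize the sequence of left/right hits as a race between two independent renewal-type processes $N^{(1)},N^{(2)}$ whose holding times are independent exponentials with means $f(i)=\tfrac r2-S_i$ and $g(i)=\tfrac r2-T_i$ (Lemma~\ref{l:coupling}). The problem then reduces to concentration of sums of independent (non-identically distributed) exponentials, for which Lemma~\ref{lem2}(ii) suffices (Lemma~\ref{l:erosion}). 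Your compensated-martingale route stays with the original process and avoids the auxiliary construction entirely, at the cost of importing a Bernstein-type maximal inequality for martingales with sub-exponential increments, which the paper does not provide. Both arguments yield Lemma~\ref{l:trap} as a byproduct; your route gets it in one stroke rather than via Lemma~\ref{l:half}.

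\textbf{The gap.} Your good event $\cG$ does \emph{not} have probability $1-e^{-cr}$ as written. Condition (ii) demands that \emph{every} partial sum of the left-advancements lie within $\delta_1 k+\log^2 r$ of $gk$; at $k=1$ this is asking a single $\mathrm{Geom}(\tfrac{\lambda}{1+\lambda})$ variable not to exceed $O(\log^2 r)$, which fails with probability $e^{-c\log^2 r}$, and the union bound over $k\le m$ gives only $1-O(r\,e^{-c\log^2 r})$. The repair is immediate: in the final conversion step you only invoke (ii) at $k=\sum_i U_i$, and you are working precisely on the event $\{\sum_i U_i>(\tfrac12+\delta)m\}$, so $k\gtrsim r$ there and the concentration for that range of $k$ does give $e^{-cr}$ after a union bound. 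Alternatively, note that $\sum_{j\le m} U_j(G_j-g)$ is itself a mean-zero martingale with sub-exponential increments (since $G_j$ is conditionally independent of $U_j$ and of $\cF_{j-1}$), so a second application of your Bernstein inequality yields $|A^-_m-g\sum_j U_j|\le \delta_1 m$ with probability $1-e^{-cr}$ directly. With either fix your argument goes through.
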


The proof of Lemma \ref{l:half} is involved and requires a somewhat complicated coupling to a different process; we postpone it to the next subsection. Using this, however, the proof of  Lemma \ref{l:trap} is almost immediate and we complete that part of the argument now. 

\begin{proof}[Proof of Lemma \ref{l:trap}]
Let $X_1,X_2,\ldots$ and $Y_1,Y_2,\ldots$ be two independent sequences of i.i.d.\ $\mbox{Geom}(\frac{\lambda}{1+\lambda})$ variables independent of the sequences $\{U_{i}\}_{i=1}^{m}$ defined above. It follows from Lemma \ref{l:gap} that $a_{m}$ is stochastically dominated by  $-\frac{r}{2}+\sum_{i=1}^{m} X_{i}U_{i}$, and similarly $b_{m}$ stochastically dominates $\frac{r}{2}-\sum_{i=1}^{m}Y_{i}V_{i}$. Now clearly, using Lemma \ref{lem2} it follows that for all $\delta, \delta'>0$ there exists $c=(\delta,\delta')>0$ such that 
$$\P\left(\sum_{i=1}^{(\frac{1}{2}+\delta)m} X_{i} \geq  (1+\delta')\frac{1+\lambda}{\lambda}(\frac{1}{2}+\delta)m \right)\leq e^{-cr}.$$
Choosing $\delta$ and $\delta'$ sufficiently small so that $(1+\delta')\frac{1+\lambda}{\lambda}(\frac{1}{2}+\delta)\beta < \frac{1}{2}$ (this is possible since $\beta< \frac{\lambda}{1+\lambda}$ and $m=\beta r$) it follows using Lemma \ref{l:half} that $\P(a_{m}<0)\geq 1-e^{-cr}$. By symmetry an identical bound holds for $\P(0<b_m)$ and we are done by taking a union bound. 
\end{proof}

\subsection{Coupling with an internal erosion process}
It only remains to prove Lemma \ref{l:half}. As alluded to before to this end we shall use a coupling to a process called internal erosion, (see \cite{LP07} for a nice exposition on the latter). Let $X_1,X_2,\ldots$ and $Y_1,Y_2,\ldots$ be two independent sequences of i.i.d.\ $\mbox{Geom}(\frac{\lambda}{1+\lambda})$ variables. Let $S_{i}=\sum_{j=1}^{i} X_{j}$ and $T_i= \sum_{j=1}^{i} Y_j$ denote the sequence of partial sums.  Let $\tau_1$ (resp.\ $\tau_2$) denote the largest positive integer $i$ such that $S_{i}$ (resp.\ $T_{i}$) is less than $\frac{r}{2}$. Now let $\{Z_{i}\}_{1\leq i< \tau_1}$ (resp.\ $\{W_{i}\}_{1\leq i< \tau_2}$) be a sequence of independent exponential random variables with means $f(i)$ (resp.\ $g(i)$) where $f(i)= \frac{r}{2}-S_{i}$  (resp.\ $g(i)=\frac{r}{2}-T_{i}$). Consider the two following continuous time counting processes: 
$$N^{(1)}(t)=\sup\{n\geq 0: \sum_{i=1}^{n} Z_{i} \leq t\}; \quad N^{(2)}(t)=\sup\{n\geq 0: \sum_{i=1}^{n} W_{i} \leq t\}.$$
Set also $N(t)=N^{(1)}(t)+N^{(2)}(t)$. We shall crucially use the following connection of the above process with the trap setting procedure described in the previous subsection. 

\begin{lemma}
\label{l:coupling}Consider the barrier processes $\{a_i\}, \{b_i\}$ and the internal erosion process described above.
There is a coupling between the two processes satisfying the following: for all $t\geq 0$ such that $N^{(1)}(t)< \tau_1$ and $N^{(2)}(t)< \tau_2$, one has $a_{N(t)}=-\frac{r}{2}+ \sum_{i=1}^{N^{(1)}(t)} X_{i}$ and $b_{N(t)}=\frac{r}{2}-\sum_{i=1}^{N^{(2)}(t)} Y_{i}.$ Moreover, $N^{1}(t)$ is the number of times the barrier $a_0 < a_1<\ldots$ is hit among the first $N(t)$ particles.
\end{lemma}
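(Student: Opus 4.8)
The plan is to construct the coupling explicitly and then verify the three asserted identities by induction on the number of particles settled. The key point is that the trap-setting procedure has two sources of randomness that decouple nicely: first, the \emph{gaps} $\mathbf{Trap}_i - q_{i-1}$, which by Lemma~\ref{l:gap} are dominated by (and, when the relevant site has not yet been exhausted, actually distributed as) i.i.d.\ $\mathrm{Geom}(\frac{\lambda}{1+\lambda})$ variables independent of everything else — these we identify with the $X_i$'s (for left hits) and $Y_i$'s (for right hits); second, the \emph{choice of side}, i.e.\ whether the $i$-th exploration path hits $a_{i-1}$ or $b_{i-1}$ first. By the discussion in ``Running the dynamics,'' consecutive exploration paths are independent of each other, and each is a simple random walk on the current interval $[a_{i-1}, b_{i-1}]$ started at the origin; hence by the Gambler's-ruin computation, conditionally on the current barrier positions, the probability of hitting the left barrier $a_{i-1}$ is $\frac{b_{i-1}}{b_{i-1}-a_{i-1}}$, and the right barrier $b_{i-1}$ with the complementary probability. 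Writing $a_{i-1} = -\frac r2 + S_{\cdot}$ and $b_{i-1} = \frac r2 - T_{\cdot}$, these Gambler's-ruin probabilities are (up to the normalization $b-a$, which is the total width) proportional to $f(\cdot) = \frac r2 - S_\cdot$ and $g(\cdot) = \frac r2 - T_\cdot$ respectively — exactly the means of the exponential clocks $Z_i$ and $W_i$.

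\medskip
Concretely, I would build the coupling on a common probability space carrying the sequences $\{X_i\}, \{Y_i\}$ (used both in the barrier process via Lemma~\ref{l:gap} and in the erosion process by definition) together with the exponential clocks $\{Z_i\}, \{W_i\}$. Run the continuous-time process $N(\cdot)$: each time the $N^{(1)}$-clock rings (i.e.\ a new $Z$-sum is crossed), declare the next exploration path to hit the left barrier and advance $a$ by the next unused $X_i$; each time the $N^{(2)}$-clock rings, declare a right hit and advance $b$ by the next unused $Y_i$. The competing-exponentials fact — among independent exponentials with means $f$ and $g$, the first to ring is the $f$-clock with probability $\frac{1/f}{1/f + 1/g} = \frac{g}{f+g}$ — shows that, at the step where $S$ has advanced to $S_j$ and $T$ to $T_k$, the next ring is a left hit with probability $\frac{g(k)}{f(j)+g(k)} = \frac{\frac r2 - T_k}{(\frac r2 - S_j) + (\frac r2 - T_k)} = \frac{b - 0}{b - a}$, which matches the Gambler's-ruin probability for the exploration path on $[a,b]$. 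Since the exploration paths are independent across particles and independent of the gap variables, this coupling faithfully reproduces the joint law of $(\{U_i\}, \{a_i\}, \{b_i\})$ up to the first time either side's partial sums would exceed $\frac r2$ (i.e.\ before $\tau_1$ or $\tau_2$ is reached, which is exactly the range of validity asserted in the lemma). The three claimed identities — $a_{N(t)} = -\frac r2 + \sum_{i\le N^{(1)}(t)} X_i$, $b_{N(t)} = \frac r2 - \sum_{i \le N^{(2)}(t)} Y_i$, and $N^{(1)}(t)$ counting the left hits among the first $N(t)$ particles — then hold by construction, and a short induction on the number of rings before time $t$ makes this precise.

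\medskip
The main obstacle I anticipate is making the ``faithful reproduction'' step fully rigorous: one must check that the law of the side-choice at step $i$, \emph{conditioned on the entire history} (all previous sides and all previous gaps), is genuinely the Gambler's-ruin Bernoulli with the stated parameter, and not merely so in an averaged sense. This requires two sub-points: (a) that the gap variables are independent of the side-choices — which follows because, as noted in Lemma~\ref{l:gap}, the gap is determined by instructions read during the \emph{backward} exploration from the hit barrier toward the origin, whereas the side-choice is determined by the \emph{forward} exploration path, and these use disjoint portions of $\mathscr{I}$ (here the key structural fact from ``Running the dynamics'' that the consecutive exploration paths are independent is doing the real work); and (b) that conditioning on the gap not being truncated (the caveat in Lemma~\ref{l:gap} that it is only a stochastic domination, since $\mathbf{Trap}_i - a_{i-1} \le r - a_{i-1}$) is harmless on the event in question — but on the event $N^{(1)}(t) < \tau_1$, $N^{(2)}(t) < \tau_2$ both barriers are still strictly inside $(-\frac r2, \frac r2)$ and well separated, so no truncation has yet occurred and the domination is an equality. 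With these two points in hand, the inductive verification of the identities is routine, and the statement ``$N^{(1)}(t)$ is the number of times the barrier $a_0 < a_1 < \cdots$ is hit among the first $N(t)$ particles'' is just a restatement of the bookkeeping in the coupling's definition.
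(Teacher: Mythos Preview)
Your proposal is correct and follows essentially the same route as the paper's own proof: both arguments (i) observe that the non-zero barrier increments are exactly i.i.d.\ $\mathrm{Geom}(\tfrac{\lambda}{1+\lambda})$ as long as neither barrier has crossed the origin (so the truncation in Lemma~\ref{l:gap} is inactive on the event $N^{(1)}(t)<\tau_1$, $N^{(2)}(t)<\tau_2$), and (ii) match the Gambler's-ruin side-choice probability $\tfrac{b}{b-a}=\tfrac{g}{f+g}$ to the competing-exponentials probability, the paper invoking the memoryless property explicitly where you invoke the standard competing-clocks fact. One small wording caveat: your phrase ``these use disjoint portions of $\mathscr{I}$'' is not literally accurate --- the backward search inspects the same stack entries the forward exploration used --- but the independence you need is exactly what Lemma~\ref{l:gap} already provides (gap independent of everything else, in particular of the side hit), so the argument stands.
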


\begin{proof}
The proof is a consequence of the memoryless property of Exponential variables. Recall from the proof of Lemma \ref{l:gap} that the consecutive non-zero increments of the process $a_0\le a_1\le \ldots,$ are distributed as $X_{1}, X_2,\ldots,$ truncated at certain values which are functions of both the barrier processes. However note that while neither barrier process has reached zero, the issue of truncation does not arise. And hence we can couple the increments of the $\{a_i\}_{i\ge 1}$ exactly to the process $\{X_i\}_{i\ge 1}$ for the first $\tau_1$ increments. Similarly the decrements of the process $\{b_i\}_{i\ge 1}$ can be coupled exactly to the process $\{Y_i\}_{i\ge 1}$ for the first $\tau_2$ decrements (see Figure \ref{fig0} for an illustration.).

Thus to finish the proof of the lemma, we have to argue that the probability of the $j^{th}$ particle hitting the barrier $a_{j-1}$ instead of $b_{j-1}$ is the same as the process $N^{1}(t)$ increasing before $N^{2}(t)$ when $N(t)=j-1$ for any $j$ such that both $N^{1}(t)< \tau_1$ and $N^{2}(t)< \tau_2.$
Note that the probability of the $(N(t)+1)^{th}$ particle hitting the barrier $a_{N(t)}$ as opposed to $b_{N(t)}$ has probability 
\begin{equation}\label{probability}
\frac{b_{N(t)}}{b_{N(t)}-a_{N(t)}}=\frac{g(N^{(2)}(t))}{f(N^{(1)}(t))+g(N^{(2)}(t))}.
\end{equation} 
Note that given the filtration up to time $t$, $N^{1}_t$ increases before $N^{2}(t)$ if and only if $$\sum_{i=1}^{N^{1}(t)+1}Z_i-t\le \sum_{i=1}^{N^{1}(t)+1}W_i-t.$$
Now given the filtration up to time $t,$  using the memoryless property, it follows that  
$$\sum_{i=1}^{N^{1}(t)+1}Z_i-t \mbox{ is distributed as } Z_{N^{1}(t)+1},$$ and similarly $\sum_{i=1}^{N^{2}(t)+1}W_i-t$ is distributed as $W_{N^{2}(t)+1}.$
Thus using the fact that $$\P(Z_{N^{1}(t)+1}<Z_{N^{2}(t)+1})=\frac{g(N^{(2)}(t))}{f(N^{(1)}(t))+g(N^{(2)}(t))},$$ the proof is complete using \eqref{probability}. 
\end{proof}

Using Lemma \ref{l:coupling} to prove Lemma \ref{l:half}, it suffices to prove the following lemma.  Recall that $m=\beta r$ is the total number of particles. 

\begin{lemma}
\label{l:erosion}
Let $\beta < \frac{\lambda}{1+\lambda}$ and let $\delta>0$ be fixed (and sufficiently small  as a function of $\frac{\lambda}{1+\lambda}-\beta$). Let $M_1=(\frac{1}{2}-\delta)\beta r$, $M_2=(\frac{1}{2}+\delta)\beta r$. Let $\mathcal{E}$ denote the event that there exists $t$ such that $\{N^{(1)}(t)\geq M_2, N^{(2)}(t)\leq M_1\}$ or $\{N^{(2)}(t)\geq M_2, N^{(1)}(t)\leq M_1\}$. Then there exists $c>0$ such that $\P(\mathcal{E})\leq e^{-cr}$.   
\end{lemma}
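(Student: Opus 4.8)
The plan is to control the two counting processes $N^{(1)}$ and $N^{(2)}$ separately and show that each stays close to half of the running total until time $m=\beta r$, so that a gap as large as $2\delta\beta r$ between them is exponentially unlikely. The key point is that, conditionally on the increments $\{X_i\}$ and $\{Y_i\}$ (equivalently, on the partial sums $S_i,T_i$ and hence on the means $f(i),g(i)$ of the exponential clocks), the process which increments next is $N^{(1)}$ with probability $g(N^{(2)})/(f(N^{(1)})+g(N^{(2)}))$ and $N^{(2)}$ otherwise, by the memoryless property computation already carried out in the proof of Lemma~\ref{l:coupling}. So the sequence of ``which side got hit'' is, conditionally, a sequence of independent but not identically distributed Bernoulli variables whose success probabilities are the ratios above.

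First I would fix a favorable event on which both barrier processes are well inside the interval. By Lemma~\ref{l:gap} each non-zero increment of $a_i$ (resp.\ decrement of $b_i$) is dominated by a $\mathrm{Geom}(\frac{\lambda}{1+\lambda})$ variable, with mean $\frac{1+\lambda}{\lambda}$. Using part (i) of Lemma~\ref{lem2}, with exponentially small failure probability (in $r$) we have $S_i \le (1+\delta')\frac{1+\lambda}{\lambda} i$ for all $i \le M_2$ and similarly for $T_i$; since $M_2 = (\tfrac12+\delta)\beta r$ and $\beta < \frac{\lambda}{1+\lambda}$, choosing $\delta,\delta'$ small makes $(1+\delta')\frac{1+\lambda}{\lambda}(\tfrac12+\delta)\beta < \tfrac12$, so on this event $S_i, T_i < \frac{r}{2}(1-\kappa)$ for a fixed $\kappa>0$ whenever $i \le M_2$. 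Call this event $\mathcal{G}$; it has probability at least $1-e^{-c_1 r}$. On $\mathcal{G}$, for every relevant index both $f(i) = \frac{r}{2}-S_i$ and $g(i) = \frac{r}{2}-T_i$ lie in $[\frac{\kappa r}{2}, \frac{r}{2}]$, so the conditional success probability $g(N^{(2)})/(f(N^{(1)})+g(N^{(2)}))$ stays bounded away from $0$ and $1$ — but that alone is not enough; I need it to stay near $\tfrac12$.

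To get closeness to $\tfrac12$ I would run a stopping-time/bootstrap argument. Suppose toward a contradiction that the event $\mathcal{E}$ occurs: then there is a first particle index $j$ at which, say, $N^{(1)}$ reaches $M_2$ while $N^{(2)}$ is still $\le M_1$. Just before that index, $N^{(1)} - N^{(2)} \ge M_2 - M_1 = 2\delta\beta r$, with $N^{(1)}+N^{(2)} = j \le 2M_2$. On $\mathcal G$, while $N^{(1)}-N^{(2)} \ge \delta\beta r$ and both are $\le M_2$, we have $S_{N^{(1)}} - S_{N^{(2)}}$ not too large but more importantly $f(N^{(1)}) = \frac r2 - S_{N^{(1)}}$ and $g(N^{(2)}) = \frac r2 - S_{N^{(2)}}$: since $S$ is increasing and $N^{(1)} > N^{(2)}$ we get $f(N^{(1)}) \le g(N^{(2)})$, hence the conditional probability $g/(f+g) \ge \tfrac12$ — i.e.\ the process is biased \emph{back} toward equalizing. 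Thus $N^{(1)}-N^{(2)}$ behaves like a supermartingale-type quantity with a strictly negative drift whenever the gap is of order $r$, and for it to reach $2\delta\beta r$ requires an atypical run. Quantitatively: decompose the Bernoulli sequence as its conditional mean (which, on $\mathcal{G}$ and while the gap is positive, pushes the gap down) plus a martingale difference sequence bounded by $1$; apply Azuma--Hoeffding to the martingale part over the at most $2M_2 = O(r)$ steps. The deterministic drift contribution cannot make the gap grow, so $\{N^{(1)}-N^{(2)} \ge 2\delta\beta r \text{ at some } j\le 2M_2\}$ forces the martingale to have fluctuation $\ge 2\delta\beta r = \Theta(r)$ over $O(r)$ steps, which has probability $\le e^{-c_2 r}$. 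The same argument applies with the roles of $N^{(1)},N^{(2)}$ swapped, and combining with $\mathbb P(\mathcal G^c) \le e^{-c_1 r}$ and a union bound gives $\mathbb P(\mathcal E) \le e^{-cr}$.

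\textbf{Main obstacle.} The delicate point is making the ``negative drift whenever the gap is large'' rigorous at the level of the \emph{discrete index} (particle number) rather than continuous time, and correctly handling the coupling between the Bernoulli success probabilities and the already-realized increments $X_i, Y_i$ that determine $f,g$. In particular one must be careful that the event $\mathcal G$ (a statement about partial sums $S_i,T_i$ up to index $M_2$) is measurable with respect to information that can be revealed before the Bernoulli choices are made, so that conditioning on $\mathcal G$ does not destroy the martingale structure of the ``which side'' sequence; the clean way is to reveal all the $X_i,Y_i$ first (defining $f,g$ and the event $\mathcal G$), and only then run the sequential side-selection, which by the memoryless computation in Lemma~\ref{l:coupling} is exactly a sequence of conditionally independent Bernoullis with the stated parameters. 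Everything else — the concentration of $S_i,T_i$, the Azuma bound, the arithmetic check that $\beta<\frac{\lambda}{1+\lambda}$ leaves room — is routine given Lemmas~\ref{lem2} and~\ref{l:gap}.
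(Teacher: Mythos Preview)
Your argument has a genuine error in the sign of the drift. You write that when $N^{(1)}>N^{(2)}$ one has $f(N^{(1)})\le g(N^{(2)})$ and hence $g/(f+g)\ge\tfrac12$, and conclude that ``the process is biased back toward equalizing.'' But $g/(f+g)$ is precisely the probability that $N^{(1)}$ increments at the next step (the particle hits the nearer barrier $a$), so $g/(f+g)>\tfrac12$ means $N^{(1)}$ is \emph{more} likely to grow: the gap $N^{(1)}-N^{(2)}$ has nonnegative drift when it is positive. The process is self-reinforcing (this is exactly the internal-erosion/P\'olya-urn mechanism), not self-correcting. Your Azuma decomposition therefore fails as written: the drift term can accumulate $\Theta(r)$ in the direction that \emph{enlarges} the gap, precisely the scale you are trying to exclude, so bounding the martingale part alone does not suffice. (There is also a smaller slip: $g(N^{(2)})=\tfrac r2-T_{N^{(2)}}$, not $\tfrac r2-S_{N^{(2)}}$; $S$ and $T$ are independent sequences, so the comparison does not follow from monotonicity of one of them.) A repaired version of your idea would need a Gronwall-type control showing that the reinforcement rate, being proportional to the current gap divided by $r$, can only amplify the $O(\sqrt r)$ martingale fluctuations by a bounded factor over $\beta r$ steps; that is possible but considerably more delicate than what you sketched.

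The paper bypasses the reinforcement issue entirely by staying in continuous time. The event $\{N^{(1)}(t)\ge M_2,\ N^{(2)}(t)\le M_1\}$ forces $\sum_{i\le M_2}Z_i\le t<\sum_{i\le M_1+1}W_i$, so it suffices to show $\sum_{i\le M_2}Z_i>\sum_{i\le M_1}W_i$ (and the symmetric inequality). Conditional on $\{S_i\},\{T_i\}$ these are sums of independent exponentials, and Lemma~\ref{lem2}(ii) concentrates each near its conditional mean $\sum_{i\le M}f(i)$ or $\sum_{i\le M}g(i)$; the means themselves concentrate by Lemma~\ref{lem2}(i). The difference of expectations works out to $(M_2-M_1)\bigl(\tfrac r2-\tfrac{1+\lambda}{2\lambda}(M_1+M_2)\bigr)=\delta\beta r^2\bigl(1-\tfrac{1+\lambda}{\lambda}\beta\bigr)$, which is a positive $\Theta(r^2)$ precisely because $\beta<\tfrac{\lambda}{1+\lambda}$; since concentration is at a smaller scale, the desired inequality holds with exponentially small failure probability. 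This direct comparison of arrival times never touches the step-by-step drift, which is why the reinforcing nature of the process causes no trouble.
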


\begin{proof} First observe that, by Lemma \ref{lem2}, $\max(S_{M_2}, T_{M_2})\leq \frac{r}{2}$ (since $\delta$ is sufficiently small) with exponentially small failure probability and hence we have $\min (\tau_1, \tau_2)\geq M_2$ with exponentially small failure probability.  Thus we can safely restrict our analysis to the latter event.  Observe next that it suffices to prove that with exponentially (in $r$) small failure  probability, we have 

\begin{equation}
\label{e:m1m2}
\sum_{i=1}^{M_2} Z_{i}> \sum_{i=1}^{M_1} W_i; \quad \sum_{i=1}^{M_2} Z_{i}> \sum_{i=1}^{M_1} W_i.
\end{equation}

Conditional on the sequences $S_i$ and $T_i$, the concentration estimates in Lemma \ref{lem2} imply that the terms $A_1:=\sum_{i=1}^{M_1}Z_i,$ $A_2:=\sum_{i=1}^{M_1}W_i,$ $A_3:=\sum_{i=1}^{M_2}Z_i , A_4:=\sum_{i=1}^{M_2}Z_i $ are all concentrated near their means $p_1,p_2,p_3,p_4,$ with exponentially small failure probability. The proof is then essentially completed by comparing the means. Note that the means are themselves random (functions of $S_i$ and $T_i$) and hence the last detail is to show that the means themselves are concentrated.

Formally we first observe that $p_1=M_1\frac{r}{2}-\sum_{i=1}^{M_1}i S_{M_1-i+1}$ and similar expressions holds for $p_2,p_3$ and $p_4.$  Note that $\E(p_1)=M_1\frac{r}{2}-\frac{1+\lambda}{\lambda}\frac{M_1^2}{2}+O(M).$
There are several ways to prove concentration of $p_1$ and below we sketch a way to use Lemma \ref{lem2} to achieve this. Note that the latter only allows for sums of geometric variables, whereas we have a linear combination of them.  Since we can afford to be rather crude with our estimates, we use the following decomposition $$p_1=M_1\frac{r}{2}-\sum_{i=1}^{M_1}\sum_{j=1}^{i} S_{j}.$$
Thus  by union bound, after applying Lemma \ref{lem2} to each of the terms of form  $\sum_{j=1}^{i} S_{j},$ it follows that: for all $\e_1$ small enough, there exists $c$ depending on all the parameters except $r,$ such that
\begin{align*}
\P\left(|p_1-\E(p_1)|\ge \e_1 r^2\right)<e^{-cr}.
\end{align*}
Similar analysis allows us to conclude similar bounds as above for $p_2,p_3,p_4$. By choice of $M_1$ and $M_2,$ note that there exists $\e_1$ such that $\E(p_3)-\E(p_1)\ge 4\e_1r^2$ and similarly $\E(p_4)-\E(p_2)\ge 4\e_1r^2.$ 
Thus we see that with probability at least $1-e^{-cr},$ the sequences $S_i,T_i$ are such that
\begin{align*}
p_3-p_1>2\e_1r^2 \,\,\, \mbox{and }
p_4-p_2>2\e_1r^2.
\end{align*}
Moreover, conditioned on the above events, for $j=1,2,3,4,$ Lemma \ref{lem2} implies the following concentration estimates:
\begin{align}
\P\left(|A_j-p_j|\ge \frac{\e_1}{2} r^2\right)>e^{-cr}.
\end{align}

Thus combining the above inequalities and union bound the lemma follows.
\end{proof}

We are now ready to complete the proof of Lemma \ref{l:half}. 

\begin{proof}[Proof of Lemma \ref{l:half}]
We shall use the coupling described in Lemma \ref{l:coupling}. Let $M_1, M_2$ be as in Lemma \ref{l:erosion}. Now by the coupling discussed above and Lemma \ref{l:erosion}, it follows that with exponential (in $r$) failure probability, $M_1$ particles hit both barriers before $M_2$ particles hit any barrier. 
 
Since the total number of of particles is $m\leq M_1+M_2$ it follows that, with exponentially high probability neither $\sum_{i=1}^{m} U_{i}$ nor $\sum_{i=1}^{m} V_{i}$ can exceed $M_2$. Since $\beta < \frac{\lambda}{1+\lambda},$ we can safely ignore the exponentially (in $r$) unlikely event that $M_2\ge \min(\tau_1,\tau_2)$ and hence assume that  coupling in Lemma \ref{l:coupling} does not fail. 
\end{proof}
\begin{remark}\label{sub12} It can be shown that $T_n(\mu,\lambda)$ will be of order $n^3,$ if all the particles (approximately $\mu n$) were initially located at the same site and hence Theorem \ref{sub} relies heavily on the location of the particles in the initial configuration being uniform.
To see this, note that by the above discussion regarding topplings, when a linear in $n$ say $\alpha n,$ number of particles start at the origin, then to stabilize, due to lack of space, at least $\frac{\alpha n}{2}$ particles must move outside an interval of size $\frac{\alpha n}{2}$  centred at the origin. Since a random walk path takes time $\Theta(n^2)$ on average, to exit such an interval, the observation follows. 
\end{remark}

\section{Slow fixation for low sleep rate}  

In this section we prove Theorem \ref{sup}. That is, we prove that for any $\mu>0$ and sufficiently small sleep rate $\lambda$,  ${\rm ARW}(\mu,\lambda)$ on $\cC_n$ takes at least exponentially many steps before reaching the absorbing state with failure probability exponentially small in $n$.

\subsection{The Stabilization Loop}
We shall now describe the toppling scheme outlined in Section \ref{sop} in more detail. Let $\mu\in (0,1)$ be fixed and $\lambda$ be sufficiently small. By Lemma \ref{abp} it suffices to exhibit a sequence of legal topplings with exponentially many steps. We shall show that our procedure satisfies this property with exponentially high probability if $\lambda$ is sufficiently small. 

While running this scheme, particles will switch between two different states, which we call states $X$ and $Y$. Particles in state $X$ follow normal ARW dynamics among themselves as described in Section \ref{fvap}, and can wake up sleeping $Y$-particles. $Y$-particles, on the other hand, do not move, and have no effect on the states of any other particles. Thus a state of the system during this toppling scheme consists of all the particles, each in state $X$ or state $Y$, and each asleep or awake. When a site is toppled, only $X$-particles at that site follow the corresponding stack instructions from $\sI$ (see Section \ref{fvap}). So $Y$-particles only undergo the transition from sleepy to active when an active $X$-particle reaches the same site.

As described in Section \ref{sop}, our toppling procedure will run multiple rounds of what we call stabilization loop. Formally, starting from a particle configuration $\eta$ -- i.e. the values $\eta_t(x)$ for $x \in \cC_n$ -- stabilizing the system in a subset $\cD$ of sites in $\cC_n$ means choosing some particles to be in state $X$ and the rest to be in state $Y$. Then running the particle dynamics described above, only toppling sites inside $\cD$, until all $X$-particles are asleep in $\cD$ or are outside $\cD$. Now the obvious strong Markov property of the above dynamics makes the different stabilization rounds conditionally independent which would be crucial in our calculations.

It will be convenient to identify $\cC_n$ with the interval $[-r,r]$ with $-r$ and $r$ identified, i.e., assume $n = 2r$ for integer $r$. We shall denote the origin by $\mathbf{0}$ and the identified vertex $r=-r$ will be denoted by $\mathbf{r}$. The stabilization steps we run will alternate between taking $\cD = \cC_n \setminus \{\mathbf{0}, \mathbf{r}\}$, $\cD = \cC_n \setminus \{\mathbf{0}\}$ and $\cD = \cC_n \setminus \{\mathbf{r}\}$.

\begin{enumerate}
\item[1.] \textbf{Stabilization Step A:} Stabilize all the particles in $\cC_n\setminus \{\mathbf{0},\mathbf{r}\}$. That is, treat particles in $\cC_n\setminus \{\mathbf{0},\mathbf{r}\}$ as $X$-particles, stopped upon hitting $\{\mathbf{0}, \mathbf{r}\}$, and all other particles as $Y$-particles. So at the end of this procedure all active particles will be at $\mathbf{0}$ or $\mathbf{r}$.

\item[2.] \textbf{Stabilization Step B:} 
Reset the $X$ and $Y$ labels: the particles initially at $\mathbf{0}$ become $X$-particles, and all other particles become $Y$-particles. 
Then stabilize all $X$-particles  with particles stopped at $\mathbf{r}$. With the identification of $\cC_n$ with $[-r,r]$, this step is the same as stabilizing the ARW dynamics in the interior of $[-r,r]$ where the initial particle configuration is supported at the  the center of the interval, a special case of the more general process analyzed later in Lemma \ref{stable12} using results from \cite{BGH15}.

\item[3.]\textbf{Stabilization Step C:} This is identical to the \textbf{Stabilization Step B} above with the roles of $\mathbf{0}$ and $
\mathbf{r}$ 
interchanged.  
\end{enumerate}

The algorithm receives an initial particle configuration $\eta$ on $\cC_n$ drawn from $\mathbb{P}^\mu$ as an input. 
Then we perform  the \textbf{Stabilization Loop}, 
which is \textbf{Stabilization Step A}, followed by  \textbf{Stabilization Step B} and  \textbf{Stabilization Step C}.
We repeat the \textbf{Stabilization Loop} until all of the particles are asleep. 
\medskip

We now state the main lemma about the \textbf{Stabilization Loop} and use it to prove Theorem \ref{sup}.

\begin{lemma}
\label{exploop}
Fix $\mu\in (0,1)$, $\e\in (0,2\mu/5)$ and any particle configuration $\eta$ with at least $(\mu-\e) n$ particles, and suppose at least $\mu n / 2$ particles are active in $\eta$. 
Let $\widetilde{\eta}$ denote the configuration after we have performed the \textbf{Stabilization Loop}.
Then for $\lambda = \lambda(\mu) > 0$ sufficiently small, there exists $c > 0$ such that

\[
\p(\widetilde{\eta} \text{ has less than } \mu n / 2 \text{ active particles}) < e^{-c n}.
\]

\end{lemma}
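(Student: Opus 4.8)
The plan is to track the number of active particles through each of the three stabilization steps, showing that at each step we lose at most a small constant fraction of the active mass (compared to $n$), with exponentially small failure probability, provided $\lambda$ is chosen small enough. The key input is the quantitative non-fixation statement extracted from \cite{BGH15} — specifically, the fact that when one stabilizes a density-$\mu$ collection of active particles inside an interval of length $\ell$ with absorbing endpoints, for $\lambda$ small all but an $\e$-fraction of the particles reach the boundary, with failure probability exponentially small in $\ell$. I would first isolate this as an auxiliary lemma (the ``Lemma \ref{stable12}'' referenced in the text) in the form: given at least $(\mu-\e)\ell$ active particles in the interior of $[0,\ell]$, after stabilization at least $(1-\e)$ of them exit through $\{0,\ell\}$, with probability $\ge 1-e^{-c\ell}$; the version with particles initially concentrated at the center is the special case needed for Steps B and C.

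The argument then goes step by step. \textbf{Step A:} We start with at least $\mu n/2$ active particles spread over $\cC_n\setminus\{\mathbf 0,\mathbf r\}$, which after identifying with $[-r,r]$ splits into two intervals $[\mathbf 0,\mathbf r]$ and $[\mathbf r,\mathbf 0]$ of length $r=n/2$ each. Applying the auxiliary lemma to each half-interval, all but an $\e n$-fraction of the active particles reach $\{\mathbf 0,\mathbf r\}$, so at the end of Step A we have at least $\mu n/2 - \e n$ active particles sitting at $\mathbf 0$ and $\mathbf r$ combined; by pigeonhole at least $\mu n/4 - \e n/2$ are at one of the two poles, say $\mathbf 0$ (the other case is symmetric). \textbf{Step B:} We relabel the particles at $\mathbf 0$ as $X$-particles and stabilize them in the interior of $[-r,r]$ with absorption at $\mathbf r$; here the relevant interval length is again of order $n$, the initial configuration is concentrated at the center, and the auxiliary lemma gives that all but an $\e n$-fraction reach $\mathbf r$, remaining active. \textbf{Step C:} Identical, with the roles of $\mathbf 0$ and $\mathbf r$ swapped, moving the active mass that accumulated at $\mathbf r$ back towards $\mathbf 0$. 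Summing the losses: we start with $\ge \mu n/2$, and over the three steps we lose at most $3\e n$ (plus the pigeonhole factor, which is why one needs to be a bit careful and why the hypothesis restricts $\e < 2\mu/5$); choosing $\e$ small enough — which forces $\lambda=\lambda(\mu)$ small enough by the auxiliary lemma — keeps the active count above $\mu n/2$ at the end. A union bound over the $O(1)$ invocations of the auxiliary lemma, each failing with probability $e^{-cn}$, gives the claimed $e^{-cn}$ overall failure probability.

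One subtlety to handle carefully: after Step A the active particles are concentrated at the two poles rather than spread with a clean density, so Steps B and C must be phrased in terms of particles initially located at a single site — this is exactly the ``center of the interval'' special case mentioned in the statement of Step B, and the auxiliary lemma must be stated in that generality (it is, since \cite[Lemma 18]{BGH15} only needs a lower bound on the number of active particles, not spatial spread). A second point is bookkeeping of the $Y$-particles: asleep particles not currently being toppled may get woken up when an active $X$-particle passes over their site, which only helps (it can only increase the active count), so it is safe to simply ignore this effect in the lower bound — we never need to claim sleeping particles stay asleep, only that enough $X$-particles survive.

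\textbf{Main obstacle.} The technical heart is establishing the auxiliary lemma in the precise quantitative form needed — namely, that the fraction of particles falling asleep before exiting is small \emph{with exponentially small failure probability in the interval length}, uniformly in the (large) interval, starting from particles concentrated at a point. The proof of \cite[Lemma 18]{BGH15} gives essentially this, but adapting it to the finite interval with two absorbing endpoints (rather than a half-line or the full line) and making sure the exponential rate $c$ does not degrade as the interval grows is where the real work lies; everything else is pigeonhole, a union bound, and choosing $\e$ and $\lambda$ in the right order.
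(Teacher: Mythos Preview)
Your bookkeeping does not close. You start with $\ge \mu n/2$ active particles, argue that each of the three steps loses at most a constant fraction (say $\e n$), and conclude that the active count stays above $\mu n/2$. But $\mu n/2 - 3\e n < \mu n/2$ for any $\e>0$, so no choice of $\e$ (equivalently, of $\lambda$) makes this work. The hypothesis $\e<2\mu/5$ does not rescue this arithmetic; it plays a different role in the paper's argument.

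The ingredient you are missing --- and explicitly dismiss as ``safe to ignore'' --- is the recycling of $Y$-particles. The paper's Lemma \ref{expstepB} asserts not only that $90\%$ of the $X$-particles starting at $\mathbf 0$ reach $\mathbf r$, but also that \emph{every} $Y$-particle is active at the end of the step. The reason is that the $X$-particles, on their way from $\mathbf 0$ to $\mathbf r$, traverse both arcs of the cycle (enough trajectories end at $r$ and enough at $-r$, by a Chernoff bound on the symmetric exit point), and hence sweep over every site and wake up every sleeping $Y$-particle. Applying this at Step C, the only sleeping particles at the end of the loop are the $\le 0.1 A$ $X$-particles that fell asleep in Step C itself, where $A$ is the number of $X$-particles in that step. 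Since $A \le N$ (the total number of particles), the active count at the end is $\ge N - 0.1A \ge 0.9N \ge 0.9(\mu-\e)n$, and the constraint $\e<2\mu/5$ is exactly what guarantees $0.9(\mu-\e)\ge \mu/2$. Without this wake-up mechanism, the active count strictly decreases each loop and the iteration in the proof of Theorem \ref{sup} would die out after $O(1)$ rounds rather than exponentially many.
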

Using this lemma we now prove Theorem \ref{sup}.
\begin{proof}[Proof of Theorem \ref{sup}]
By the Abelian Property it suffices to demonstrate that with high probability there is a toppling algorithm that does not terminate before exponentially many steps are executed. 
By a Chernoff bound for any $\mu$ and $\e$ the probability that there are at least $(\mu-\e)n$ particles is exponentially close to one.
In the initial stage at least $\mu n /2$ of the particles are awake.
By Lemma \ref{exploop}, the number of consecutive rounds that the \textbf{Stabilization Loop} is performed is at least $e^{\frac{c}{2}r}$ with probability at least $1-e^{-\frac{c}{2}r}$ before all the particles are asleep. Each time the \textbf{Stabilization Loop} is performed, there must be at least one jump or sleep instruction occurring in it.
As $n=2r$ this completes the proof of Theorem \ref{sup}.
\end{proof}

To prove Lemma \ref{exploop} we rely on two results that are proved by adapting the analysis in \cite{BGH15}. 
Our first goal is to show that with exponentially high probability after performing \textbf{Stabilization Step A} there are at least $\mu n/4$ active particles. By definition these active particles are all at $\mathbf{0}$ or $\mathbf{r}$.

\begin{lemma}
\label{expstepA}
Fix $\mu\in (0,1), \e\in (0,2\mu/5)$ and any particle configuration $\eta$ with at least $(\mu-\e) n$ particles of which at least $\mu n/2$ are active. 

Let $\widetilde{\eta}^A$ denote the configuration after we have performed the \textbf{Stabilization Step A}.
Then for $\lambda = \lambda(\mu) > 0$ sufficiently small, there exists $c > 0$ such that

\[
\p\left(\widetilde{\eta}^A \text{ has less than } \mu n / 4 \text{ active particles}\right) < e^{-c n}.
\]

\end{lemma}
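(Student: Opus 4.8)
The plan is to reduce \textbf{Stabilization Step A} to a two-sided version of the interval stabilization result that is the technical core of \cite{BGH15}. Recall that \textbf{Step A} takes the configuration $\eta$, treats every particle not at $\mathbf{0}$ or $\mathbf{r}$ as an $X$-particle, and topples sites in $\cC_n\setminus\{\mathbf{0},\mathbf{r}\}$ until every $X$-particle is either asleep or absorbed at $\{\mathbf{0},\mathbf{r}\}$. Identifying $\cC_n$ with $[-r,r]$ and removing the two poles splits the cycle into two disjoint intervals of length $r$, say $[-r,0]$ and $[0,r]$ (with endpoints $\mathbf{0},\mathbf{r}$ acting as absorbing sinks). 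By the Abelian Property (Lemma \ref{abp}) and the sleep-monotonicity lemma (Lemma \ref{l:sleepmon}), the outcome does not depend on the toppling order, so we may stabilize the two arcs separately; on each arc we are exactly in the setting of \cite[Lemma 18]{BGH15}, namely: stabilize a density-$\mu$-ish configuration in the interior of an interval of length $r$ with particles absorbed at both endpoints.

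The key quantitative input, which I would extract from \cite{BGH15} and state as a preliminary lemma (a mild strengthening of \cite[Lemma 18]{BGH15} allowing a nonuniform but density-$\ge(\mu-\e)$ initial configuration with $\ge \mu n/2$ active particles), is: given $\e'>0$, for all $\lambda$ small enough and all $r$ large, the number of particles absorbed at the two endpoints of a length-$r$ interval is at least $(1-\e')$ times the number of \emph{active} particles initially in that interval, with failure probability $e^{-cr}$. First I would note that since $\eta$ has at least $\mu n/2 = \mu r$ active particles in total, at least one of the two arcs — or rather, summing over both arcs — contains all of them except the at most two particles sitting at the poles; thus the combined number of active particles fed into the two interval-stabilization instances is at least $\mu n/2 - 2$. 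Applying the preliminary lemma to each arc with $\e'$ chosen so that $(1-\e')\mu/2 > \mu/4$ (e.g.\ $\e' < 1/2$), and taking a union bound over the two arcs, I get that with probability at least $1 - 2e^{-cr} \ge 1 - e^{-c'n}$ the total number of particles ending at $\{\mathbf{0},\mathbf{r}\}$ is at least $(1-\e')(\mu n/2 - 2) > \mu n/4$ for $n$ large. Since after \textbf{Step A} every particle not at a pole is asleep, the particles at the poles are exactly the active ones, giving the claim.

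The main obstacle is the preliminary lemma itself: \cite[Lemma 18]{BGH15} is stated for a specific setup (a product-$\mu$ initial configuration, and typically one absorbing endpoint in their line argument), so I need to verify that their argument goes through with (i) two absorbing endpoints, (ii) an arbitrary initial configuration of the stated density with a guaranteed active fraction, and (iii) the correct bookkeeping of the exponential-in-$r$ failure probability. The two-endpoint modification should be harmless — it only makes particles get absorbed sooner — and in fact the authors later invoke essentially this in Lemma \ref{stable12}; the density/active-fraction generalization requires checking that the \cite{BGH15} estimate only used a lower bound on the number of active particles and a sub-critical-free (here super-critical, $\lambda$ small) comparison that is monotone in the initial configuration. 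A secondary, purely cosmetic point is handling the $O(1)$ particles sitting exactly at $\mathbf{0}$ or $\mathbf{r}$ in $\eta$ (they are simply already absorbed and cost nothing), and the parity/rounding in splitting $[-r,r]$ into two arcs, which is absorbed into the ``for $n$ large'' slack.
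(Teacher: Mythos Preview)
Your approach matches the paper's: both reduce Step~A to applying the \cite{BGH15}-derived interval estimate (the paper's Lemma~\ref{expmoment}) separately on each of the two arcs $(0,r)$ and $(-r,0)$ and then take a union bound. Two small patches are needed, though.

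First, your ``preliminary lemma'' as stated is false without a density hypothesis: a single active particle on an interval of length $r$ will fall asleep before reaching the boundary with probability tending to $1$ as $r\to\infty$, for any fixed $\lambda>0$. The correct statement (the paper's Lemma~\ref{expmoment}, and likewise Lemma~\ref{stable12}) requires at least $\delta_0 r$ active particles in the interval. Consequently you cannot blindly apply it to both arcs, since one arc might carry almost no active particles. The paper handles this by a pigeonhole case split: if at least $\mu n/4$ active particles fell asleep in total, then at least one arc had both $\ge \mu n/5$ active particles initially \emph{and} $\ge \mu n/20$ sleepy particles at the end, and Lemma~\ref{expmoment} makes that event exponentially unlikely. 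Equivalently, in your framing: if one arc has fewer than $\delta_0 r$ active particles its loss is at most $\delta_0 r$, while the other arc then carries enough density for the lemma; take $\delta_0$ small enough to absorb this.

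Second, the assertion ``at most two particles sitting at the poles'' is not warranted: the hypothesis allows arbitrary $\eta$, and after the first loop iteration $\eta$ typically has a linear pile at $\mathbf{0}$. This is harmless, since those particles are $Y$-particles in Step~A and simply remain active at the pole, only increasing the active count in $\widetilde\eta^A$; but your intermediate bookkeeping line ``at least $\mu n/2-2$ fed into the two arcs'' should be replaced by the observation that (active at poles) $+$ $(1-\e')\cdot$(active in arcs) $\ge (1-\e')\mu n/2$.
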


Thus after performing \textbf{Stabilization Step A} there are likely to be either 
at least $\mu n/8$ active particles at $\mathbf{0}$ or  
at least $\mu n/8$ active particles at $\mathbf{r}$. 
Suppose there at least $\mu n/8$ active particles at $\mathbf{0}$. Our next result says that with high probability after running \textbf{Stabilization Step B} 
at least $90\%$ of active particles that were at $\mathbf{0}$ are now active particles at $\mathbf{r}$. Also all $Y$ particles are now active.

\begin{lemma}
\label{expstepB}
Fix $\mu > 0$ and any particle configuration $\eta$ with  at least $A \geq \mu n/8$ active particles at $\mathbf{0}$. 

Let $\widetilde{\eta}^B$ denote the configuration after we have performed \textbf{Stabilization Step B}.
Then for $\lambda = \lambda(\mu) > 0$ sufficiently small, there exists $c > 0$ such that

\[
\p\left(\widetilde{\eta}^B \text{ has all $Y$ particles active and at least } .9A \text{ active particles at $\mathbf{r}$}\right) > 1- e^{-c n}.
\]
\end{lemma}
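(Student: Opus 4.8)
The plan is to reduce \textbf{Stabilization Step B} to the core non-fixation estimate of \cite{BGH15} (the proof of \cite[Lemma 18]{BGH15}), applied on the interval $[-r,r]$ with the particle mass $A$ initially at the center $\mathbf{0}$ and particles stopped on hitting $\mathbf{r}$. Since $\cC_n$ identified with $[-r,r]$ (endpoints glued) and $\mathbf 0$ sits at the center, toppling $X$-particles until they sleep or reach $\mathbf{r}$ is literally ARW stabilization in the interior of an interval of length $2r$ started from a point mass of size $A \ge \mu n/8 = \mu r/4$ at the midpoint. The first step is to invoke monotonicity in the number of starting particles (via the Abelian property, Lemma \ref{abp}, and Lemma \ref{l:sleepmon}): if the claim holds when exactly $\mu r/4$ particles start at $\mathbf 0$, a standard coupling shows it holds when $A \ge \mu r/4$ start there, since extra active particles can only wake more $Y$-particles and can only push mass toward $\mathbf r$. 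So we may assume $A = \mu r /4$ exactly.

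Next I would quote, in the form needed here, the key output of \cite{BGH15}: for any target density and any $\epsilon>0$, provided $\lambda$ is small enough (depending only on $\mu$), when one stabilizes $\approx\rho \ell$ particles in the interior of an interval of length $\ell$ with absorption at the two endpoints, at least $(1-\epsilon)$ fraction of all particles exit through the endpoints, with failure probability exponentially small in $\ell$. Here our "interval" has length $2r$, one endpoint is $\mathbf 0$ and the other is $\mathbf r$, and by construction no $X$-particle is ever allowed to return to $\mathbf 0$ — but the \cite{BGH15} argument controls the \emph{total} number of sleeping particles left inside, which is what we want. Choosing $\epsilon = 1/20$ gives that, with probability $\ge 1 - e^{-c' r}$, at most $A/20$ particles fall asleep inside $(-r,r)$; since the remaining active $X$-particles all end at $\mathbf 0$ or $\mathbf r$, and in our dynamics $X$-particles are never stopped at $\mathbf 0$ (only $\mathbf r$ is absorbing), in fact essentially all surviving $X$-particles are at $\mathbf r$. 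A cleaner way to phrase this: run the interior stabilization with absorption only at $\mathbf r$ and treat $\mathbf 0$ as an ordinary site; the \cite{BGH15} estimate still bounds the count of interior sleepers, and the non-absorption at $\mathbf 0$ can only help (a recurrence/return argument on the finite cycle ensures an active particle not yet asleep eventually reaches $\mathbf r$). This yields at least $.9A$ active particles at $\mathbf r$.

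For the statement that \emph{all} $Y$-particles become active: each $Y$-particle sits at some site $x \in \cC_n$, and I claim the $X$-particle cloud, before it fully stabilizes, visits every site of $\cC_n$ with exponentially high probability. This follows because the $X$-particles perform (ignoring sleep instructions, via Lemma \ref{l:sleepmon}) a system of random walks on the cycle started from a linear-size mass at $\mathbf 0$, and standard random-walk cover-time / hitting estimates on $\cC_n$ (combined with Lemma \ref{stop12}-type bounds) show that with probability $\ge 1-e^{-cr}$ the odometer is positive everywhere well before absorption; equivalently, the argument already inside \cite{BGH15} shows the activated cloud spreads across the whole interval. The moment an active $X$-particle reaches a site holding a sleeping $Y$-particle, that $Y$-particle wakes (and, by the rules, thereafter does not move but is counted as active). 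Since there is at least one active $X$-particle until the end of the step, and it must at some point occupy each site, all $Y$-particles are woken. A union bound over the (at most $n$) sites, together with the previous paragraph's estimate, gives the combined event with probability $\ge 1 - e^{-cn}$ after relabeling constants.

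The main obstacle is the second step: extracting from \cite[Lemma 18]{BGH15} precisely the quantitative statement "at most an $\epsilon$-fraction of particles are asleep inside, with $e^{-c\ell}$ failure probability" in the asymmetric geometry where one designated endpoint ($\mathbf 0$) is \emph{not} absorbing and the initial mass is a point mass at that very endpoint rather than spread out. One must check that the trap-avoidance / toppling-in-waves argument of \cite{BGH15} only uses a \emph{lower} bound on available space to the right (namely the full length $2r$) and a \emph{lower} bound on the initial mass, both of which we have; and that treating $\mathbf 0$ as reflecting rather than absorbing does not break their large-deviation bookkeeping — intuitively it cannot, since removing an absorbing boundary only keeps particles in play longer, which is consistent with more of them eventually reaching $\mathbf r$ rather than sleeping. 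Handling this carefully, and the bookkeeping to convert "fraction asleep inside" into "$.9A$ active at $\mathbf r$" once non-absorption at $\mathbf 0$ is accounted for, is where the real work lies; everything else is routine random-walk estimation and the Abelian property.
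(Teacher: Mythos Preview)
Your overall plan for part (i) is right---reduce Step B to the \cite{BGH15} interior-sleeper bound (the paper packages this as Lemma~\ref{stable12}, which is immediate from Lemma~\ref{expmoment})---but you manufacture an obstacle that is not there. In the identification of $\cC_n$ with $[-r,r]$ (endpoints glued), $\mathbf{0}$ is the \emph{center} and \emph{both} endpoints $-r$ and $r$ are the single vertex $\mathbf{r}$. You say this correctly at first, then switch to ``one endpoint is $\mathbf{0}$ and the other is $\mathbf{r}$'' and ``no $X$-particle is ever allowed to return to $\mathbf{0}$'', and spend your final paragraph worrying about an ``asymmetric geometry where one designated endpoint ($\mathbf{0}$) is not absorbing.'' None of that is relevant: Step B is literally stabilization in the interior of $[-r,r]$ with absorption at $\{-r,r\}$ and initial mass at the center, precisely the symmetric two-boundary setting of Lemma~\ref{stable12}, so the paper gets ``at least $0.9A$ at $\mathbf{r}$'' in one line with no reflecting-boundary subtlety.

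There is a genuine gap in your argument for part (ii) (all $Y$-particles become active). You invoke Lemma~\ref{l:sleepmon} to say ``ignore sleep instructions, then the $X$-particles are random walks that cover every site,'' but the inequality in Lemma~\ref{l:sleepmon} points the wrong way for this: the sleep-ignoring odometer \emph{dominates} the true odometer, so a site being visited in the sleep-free process does not force it to be visited in the actual dynamics. Your fallback sentence (``there is at least one active $X$-particle until the end of the step, and it must at some point occupy each site'') is also unjustified---an active particle that eventually sleeps or reaches $\mathbf{r}$ need not have visited every vertex. The paper's argument is different and short: view each $X$-particle as carrying its own random-walk trajectory from $0$ stopped at $\{-r,r\}$; by symmetry and a Chernoff bound, at most $0.6A$ of these $A$ trajectories terminate at $r$ (and likewise at $-r$), and since part (i) says at least $0.9A$ of the particles actually complete their trajectories, at least $0.3A$ particles physically reach $-r$ and at least $0.3A$ reach $+r$. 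Thus $\mathbf{r}$ is hit from both sides, which forces every vertex of $\cC_n$ to be traversed by some $X$-particle, waking every $Y$-particle.
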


We postpone the proofs of Lemma \ref{expstepA} and Lemma \ref{expstepB} for now and complete the proof of Lemma \ref{exploop}.

\begin{proof}[Proof of Lemma \ref{exploop}] 
After performing \textbf{Stabilization Step A}, by Lemma \ref{expstepA} there are at least $\mu n/4$ active particles at either $\mathbf{0}$ or $\mathbf{r}$ except exponentially small failure probability. On this high probability event, after performing \textbf{Stabilization Step B} then except for exponentially small failure probability there are at least $\mu n/8$ active particles at $\mathbf{r}$ by Lemma \ref{expstepB}. 
(Note that this is true no matter how the active particles were split among $\mathbf{0}$ and $\mathbf{r}$ at the start of \textbf{Stabilization Step B}). On the event that, the high probability event happens at both of these steps, when we start \textbf{Stabilization Step C} there are at least $\mu n/8$ active particles at $\mathbf{r}$. Thus we can apply Lemma \ref{expstepB}, and it follows that, again except for exponentially small failure probability, 
at the end of \textbf{Stabilization Step C} all $Y$ particles are active and at least $90\%$ of the $X$ particles are active. Since, by hypothesis, the total number of particles is at least $(\mu-\e)n$ and $\e< 2\mu/5$, easy algebra shows that on the event that none of the three steps resulted in the failure events of exponentially small probability, the loop results in a particle configuration with at least $\mu n/2$ active particles. The lemma now follows by taking a union bound over the three failure events. (Note that implicitly we are using the obvious strong Markov property of the above dynamics.)
\end{proof}

\subsection{Proving Lemmas  \ref{expstepA} and \ref{expstepB}}
Now we show how to derive Lemmas \ref{expstepA} and \ref{expstepB} from \cite{BGH15}. The following lemma will be the key in both arguments. 

\begin{lemma}
\label{expmoment} 

Fix $\delta_0\in (0,1)$ and $c_0>0$. Consider  ARW with sleep rate $\lambda$ on the interval $[-r,r]$ starting from an initial configuration $\eta$ with at least $\delta_0 r$ active particles. Let $S$ denote the number of sleepy particles in $(-r,r)$ after stabilizing. There exists $C=C(\delta_0,c_0)$ and $\lambda_0=\lambda_0(\delta_0,c_0)>0$ such that for all $\lambda< \lambda_0$ 
$$\P(S \geq c_0 r) \leq e^{-Cr} .$$
for all $r$ sufficiently large.  

\end{lemma}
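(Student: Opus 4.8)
The plan is to reduce Lemma \ref{expmoment} to the core technical estimate of \cite{BGH15}, namely (the proof of) \cite[Lemma 18]{BGH15}, which controls the number of particles that fall asleep when one stabilizes a density of active particles on an interval while stopping particles at the endpoints. Concretely, that result says: given $\e > 0$, for all sufficiently small $\lambda$, if at least a fixed density of particles is present in $[-r,r]$ and we stabilize the interior (stopping particles at $\{-r,r\}$), then the number of particles that fall asleep in the open interval is at most $\e r$ except with probability exponentially small in $r$. The content of our lemma is essentially a restatement of this with the roles of the parameters made explicit: $\delta_0$ plays the role of the guaranteed density, and $c_0$ plays the role of $\e$. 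So the first step is simply to quote this statement in the precise quantitative form we need, being careful that the threshold $\lambda_0$ is allowed to depend on both $\delta_0$ (how many active particles we start with) and $c_0$ (how small a fraction we demand fall asleep), and likewise the exponential rate $C$.

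The one genuine subtlety — and what I expect to be the main obstacle — is a matching of hypotheses. In \cite{BGH15} the initial configuration is typically taken to be either a product measure or a specific configuration (e.g.\ all particles at the center), whereas here we only assume \emph{some} configuration $\eta$ with at least $\delta_0 r$ active particles; in particular in the applications (Lemmas \ref{expstepA} and \ref{expstepB}) the active particles may all be concentrated at $\mathbf{0}$ or $\mathbf{r}$. The key point that makes this work is monotonicity in the initial configuration together with the Abelian property: stabilizing from any configuration with at least $\delta_0 r$ active particles can be dominated (in the sense of ``number of particles falling asleep in the interior'') by stabilizing from a configuration where these $\delta_0 r$ active particles are spread out in a way amenable to the \cite{BGH15} argument, since extra sleepy particles, or relocating active particles, only help activity propagate. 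More precisely, I would argue that the number $S$ of interior sleepy particles after stabilization is stochastically monotone under adding active particles, so it suffices to prove the bound starting from \emph{exactly} $\lceil \delta_0 r\rceil$ active particles (and no sleepy particles); and then the \cite{BGH15} analysis — which fundamentally only uses that a linear density of activity is present somewhere and then shows activity floods the interval when $\lambda$ is small — applies. If the cleanest route is instead to place all $\delta_0 r$ particles at the center, one invokes exactly the ``special case'' already flagged in the statement of \textbf{Stabilization Step B}.

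With that reduction in hand, the remaining steps are routine bookkeeping: (i) fix $\e = c_0$ in the \cite{BGH15} estimate; (ii) read off $\lambda_0 = \lambda_0(\delta_0, c_0)$ and the exponential rate, which we rename $C = C(\delta_0,c_0)$; (iii) absorb the (sub-exponential) failure probability coming from the ``$\delta_0 r$ active particles suffice'' reduction into the exponential bound by shrinking $C$ if necessary, valid for all $r$ large. This yields $\P(S \ge c_0 r) \le e^{-Cr}$ for all $\lambda < \lambda_0$ and all sufficiently large $r$, as claimed. I would keep the write-up short, emphasizing the monotonicity reduction and then citing \cite[Lemma 18]{BGH15} (or its proof) for the heart of the argument, rather than reproducing the percolation-style flooding argument from that paper.
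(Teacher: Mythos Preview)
Your overall strategy of reducing to the technical core of \cite{BGH15} is the right one and matches the paper's approach. However, the monotonicity reduction you propose is \emph{false}. You claim that $S$, the number of particles asleep in the interior after stabilization, is stochastically decreasing under adding active particles, so that it suffices to treat the case of exactly $\lceil\delta_0 r\rceil$ active particles. This fails already in the simplest case: going from zero active particles (where $S=0$ deterministically) to one active particle gives $S\ge 1$ with positive probability. More relevantly, adding a single active particle near the boundary, far from the rest of the configuration, which receives a sleep instruction before moving, increases $S$ by one on an event of positive probability while leaving the stabilization of the remaining particles unchanged on that event. The monotonicity that \emph{is} available for ARW is of the odometer (and hence of the number of particles reaching the boundary), not of $S$; from $E_2\ge E_1$ and $|\eta_2|\ge |\eta_1|$ you cannot conclude $S_2=|\eta_2|-E_2\le |\eta_1|-E_1=S_1$.

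Fortunately the reduction is unnecessary: the relevant estimate in \cite{BGH15} already holds for an arbitrary initial configuration with at least a fixed density of active particles, so there is nothing to reduce. The paper's proof implements this directly but with one technical wrinkle you should anticipate. It does not cite \cite[Lemma~18]{BGH15} as a black box; instead it invokes the exponential moment bound from the proof (what the paper calls Equation~(6.21) of \cite{BGH15}), namely $\E\big[(e^{S_1}+e^{S_2})\mathbf{1}_{\mathcal{E}}\big]\le e^{c_1 r}$ for any $c_1>0$ once $\lambda$ is small enough, where $S_1,S_2$ are the numbers of sleepy particles in $(-r,0]$ and $[0,r)$ and $\mathcal{E}=\{h(0)\le r^6\}$ is an odometer-truncation event. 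One then bounds $\P(\mathcal{E}^c)$ separately by a standard random walk hitting-time estimate (each particle exits $[-r,r]$ in $O(r^2)$ steps with probability bounded away from zero), and applies Markov's inequality with $c_1<c_0/4$ to get $\P(\{S\ge c_0 r\}\cap\mathcal{E})\le 2e^{c_1 r}/e^{c_0 r/2}\le e^{-Cr}$. So: drop the monotonicity paragraph, and either verify that the statement you want is literally \cite[Lemma~18]{BGH15} for arbitrary configurations, or follow the paper's route via the exponential moment bound plus the odometer truncation.
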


\begin{proof}
This lemma comes from calculations contained  in \cite{BGH15}. 
Recall the odometer function from \eqref{odo12} and let $h(0)$ denote the odometer at the origin at the end of the stabilization process and let 
$\mathcal{E}=\{h(0)\leq r^{6}\}$. 
We break up $\{S \geq c_0 r\}$ in two parts
$$\{S \geq c_0 r\}\subset (\{S \geq c_0 r\}\cap  \mathcal{E}) \cup \mathcal{E}^c .$$

First we use random walk estimates to prove $\P(\mathcal{E}^c)\leq e^{-Cr}$. This is the same as the argument presented in \cite{BGH15} (Lemma 32) but we provide the short proof for completeness. To start, note that the probability that a lazy random walk started arbitrarily inside $[-r,r]$ does not hit $\{-r,r\}$ in $Kr^{2}$ steps is at most  $e^{-Kc'},$ where $c'$ depends on the laziness parameter. Now there are at most $2r$ particles, each of which moves along an independent, $\frac{\lambda}{1+\lambda}-$lazy, random walk trajectory. So the probability that any of these particles take more than $0.5 r^{5}$ steps before hitting $\{-r,r\}$ is exponentially small. A union bound then implies that the sum of the number of steps taken by all the particles before reaching $\{-r,r\}$ is exponentially unlikely to be more than $r^6$ and hence  we get
$$\P(\mathcal{E})=\P(h(0)\le r^6)\ge 1-e^{-Cr}.$$

Fix $c_1$ such that $c_0/4>c_1>0$. Equation (6.21) in \cite{BGH15} shows for $\lambda$ sufficiently small (and $r$ sufficiently large)
$$\E \left((e^{S_1}+e^{S_2})\mathbf{1}_{\mathcal{E}}\right)\le e^{c_1r}$$ where $S_1$ and $S_2$ denote the number of sleepy particles in $(-r,0]$ and $[0,r)$ respectively.
Thus by Markov's inequality,
$$\P(\{S \geq c_0r\}\cap  \mathcal{E}) \leq
\frac{\E \left((e^{S_1}+e^{S_2})\mathbf{1}_{\mathcal{E}}\right)}{e^{\frac{c_0}{2}r}}  
\leq \frac{2e^{c_1r}}{e^{\frac{c_0}{2}r}}
\leq e^{-Cr}$$ for some $C>0,$ since $S>c_0r$ implies either $S_1$ or $S_2$ is at least $\frac{c_0r}{2}.$
\end{proof}

\begin{proof}[Proof of Lemma \ref{expstepA}]
If there are at most $\mu n/4$ active particles after \textbf{Stabilization Step A} then 
at least $\mu n/4$ particles must have fallen asleep in this step. Thus
either
\begin{enumerate}
\item there were initially 
at least $\mu n/5$ active particles on $(0,r)$ in $\eta$ and there
were at least $\mu n/20$ sleepy particles on $(0,r)$ in $\widetilde{\eta}^A$ or
\item there were initially at least 
at least $\mu n/5$ active particles on $(-r,0)$ in $\eta$ and there
were at least $\mu n/20$ sleepy particles on $(-r,0)$ in $\widetilde{\eta}^A$.
\end{enumerate}
By Lemma \ref{expmoment} both of those events have exponentially small probability. Thus the  lemma follows from an  union bound over the two cases.
\end{proof}

For the proof of Lemma \ref{expstepB} we shall use the following lemma which is immediate from Lemma \ref{expmoment} by taking $c_0$ sufficiently small and we omit the proof.

\begin{lemma}\label{stable12}
For each $\delta_0>0,$ the following holds for $\lambda$ sufficiently small. Consider stabilizing any particle configuration $\eta$ supported on $[-r,r]$ i.e. particles hitting $\{-r,r\}$ are ignored. Call the stabilized system $\eta'$. If $\eta$ has $A$ many active particles with $A \geq \delta_0 r $ then 

$$\P\left(\text{the total number of particles supported on $\{-r,r\}$ in $\eta'$ is at least  $.9A$}\right) \geq 1-e^{-cr}.$$   
\end{lemma}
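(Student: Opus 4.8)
The plan is to derive this directly from Lemma~\ref{expmoment}, since all the probabilistic content is already contained there; what remains is a short bookkeeping argument. First I would observe that once $\eta$ has been stabilized in $(-r,r)$ (with particles reaching $\{-r,r\}$ frozen there), every particle of $\eta'$ is in exactly one of two states: asleep at some site of the open interval $(-r,r)$, or sitting at one of the boundary points $\{-r,r\}$. Indeed, by definition of stabilization there are no active particles left in $(-r,r)$, and an active particle confined to a finite interval that never falls asleep would eventually exit, a contradiction. Hence, writing $S$ for the number of sleepy particles of $\eta'$ lying in $(-r,r)$ and $N_{\mathrm{tot}}$ for the total number of particles present (which is at least $A$, the number active in $\eta$, plus any that were already asleep), the number of particles of $\eta'$ supported on $\{-r,r\}$ equals $N_{\mathrm{tot}}-S \ge A-S$.

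Next I would apply Lemma~\ref{expmoment} with the \emph{same} $\delta_0$ — legitimate since $\eta$ has $A\ge\delta_0 r$ active particles — and with the choice $c_0:=\delta_0/10$. This yields $C=C(\delta_0)$ and $\lambda_0=\lambda_0(\delta_0)>0$ such that for all $\lambda<\lambda_0$ and all large $r$, $\P(S\ge \tfrac{\delta_0}{10}r)\le e^{-Cr}$. Since $A\ge\delta_0 r$, the event $\{S\ge A/10\}$ is contained in $\{S\ge\tfrac{\delta_0}{10}r\}$, so $\P(S\ge A/10)\le e^{-Cr}$. On the complementary event, $A-S>0.9A$, and combining with the previous paragraph, the number of particles of $\eta'$ on $\{-r,r\}$ exceeds $0.9A$; taking $c=C$ completes the proof.

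The only point requiring a little care — and it is genuinely minor — is ensuring that Lemma~\ref{expmoment} is invoked with a value of $\delta_0$ independent of $r$ and a $c_0$ small enough that $c_0 r\le A/10$ holds uniformly; the hypothesis $A\ge\delta_0 r$ makes $c_0=\delta_0/10$ do the job. There is no substantive obstacle: the exponential tail bound on the number of sleepy particles is exactly what Lemma~\ref{expmoment} provides, and Lemma~\ref{stable12} is merely the restatement of that bound as a lower bound on the number of particles reaching the boundary.
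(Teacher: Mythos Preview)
Your argument is correct and matches the paper's approach exactly: the paper states that Lemma~\ref{stable12} is immediate from Lemma~\ref{expmoment} by taking $c_0$ sufficiently small and omits the proof, and your choice $c_0=\delta_0/10$ together with the observation that the number of particles reaching $\{-r,r\}$ equals the total minus $S$ (hence at least $A-S$) is precisely what is needed.
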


\begin{proof}[Proof of Lemma \ref{expstepB}]
For this lemma we have to show the two events have exponentially small failure probability: (i) $\widetilde{\eta}^{B}$ has at least $0.9A$ active particles at $\mathbf{r}$, and (ii) all $Y$ particles are active. The first part is immediate from Lemma \ref{stable12}. For (ii), observe the following. 

Each $X$-particle, whenever it moves, follows an independent  random walk trajectory that is stopped at hitting $\{-r,r\}$ (the number of steps in this trajectory that is realized has a complicated dependent structure). By symmetry each of these trajectories are equally likely to end at $-r$ and $r$. Since $A\geq \mu n /8$ and using a  standard Chernoff bound, it follows that with failure probability exponentially small in $n$, no more than $0.6 A$ of these trajectories end at $r$ (also at $-r$). Since by the first part, we know that at least $0.9 A$ of these particles follows their trajectories to hit $\{-r,r\}$, this implies that except for exponentially small failure probability, both $r$ and $-r$ are hit by at least $.3A$ many $X$-particles i.e., $\mathbf{r}$ is hit from both positive and negative side. This implies that every $Y$ particle is hit by an $X$ particle, and hence all $Y$ particles are active in $\widetilde{\eta}^{B}$.  
\end{proof}

We finish with a remark on lower bounding $T_n(\mu,\lambda)$.

\begin{remark}\label{sup12}
An exponential upper bound for $T_n(\mu,\lambda)$ is relatively easy to establish.  
Note that starting from any configuration, ignoring sleep instructions, one can get each particle to a different location, using only polynomial in $n$
many instructions with probability at least $1-e^{-cn},$ since for a random walk on $\cC_n,$ the hitting time for any point is a sub-exponential variable at scale $n^2$.  Once the particles are all located at different sites, with probability at least $(\frac{\lambda}{1+\lambda})^n,$ all of their next instructions are sleep instructions which causes the system to stabilize. 
Thus starting from any configuration, the probability of stabilizing in polynomially many steps is at least  $(\frac{\lambda}{1+\lambda})^n,$ which implies an exponential upper bound on the fixation time, by running independent trials of the above argument until one trial does succeed to stabilize the system.
\end{remark}

\bibliography{arw}
\bibliographystyle{plain}

\end{document}